\documentclass[11pt]{article}

\usepackage{geometry}
\usepackage{amsmath}
\usepackage{amsthm}
\usepackage{amssymb}
\usepackage{tikz}
\usepackage[implicit]{hyperref}
\usepackage[bottom]{footmisc}
\usepackage[affil-it]{authblk}
\usepackage[toc,page]{appendix}

\newtheorem{thm}{Theorem}[section]
\newtheorem{prop}[thm]{Proposition}
\newtheorem{lem}[thm]{Lemma}
\newtheorem{cor}[thm]{Corollary}

\newtheorem*{thm*}{Theorem}
\newtheorem*{cor*}{Corollary}
\newtheorem*{claim*}{Claim}

\newtheorem{mainthm}{Main Theorem}

\theoremstyle{definition}

\newtheorem{rem}[thm]{Remark}

\newtheorem*{rem*}{Remark}

\newcommand{\Gal}{\mathrm{Gal}}
\newcommand{\Aut}{\mathrm{Aut}}

\newcommand{\n}{\circ n}
\newcommand{\m}{\circ m}

\newcommand{\xdownarrow}[1]{%
  {\left\downarrow\vbox to #1{}\right.\kern-\nulldelimiterspace}
}

\title{Maximal Arboreal Galois Images for Polynomials of Twisted Carlitz Type}

\author{MONA AL BATROUNI AND CHIEN-HUA CHEN \thanks{Electronic address: \texttt{mab108@mail.aub.edu,  cc45@aub.edu.lb}}}
 \affil{Department of Mathematics, American University of Beirut}

\date{}   % removes the date
\begin{document}
\maketitle
\abstract
In this paper, we study the arboreal Galois representations for polynomials of twisted Carlitz
type, whose first iterated Galois group is linked to the torsion of a twisted Carlitz module. We prove two explicit families of polynomials having iterated Galois groups isomorphic to full iterated cyclic wreath product at every level. We then compare the arboreal Galois image of a polynomial of twisted Carlitz type with the adelic Galois image of its corresponding twisted Carlitz module, and show that arboreal maximality and adelic surjectivity are logically independent, except for a one-way local implication.

\section{Introduction}
Arithmetic dynamics studies the interplay between iteration of algebraic maps
and arithmetic structures such as Galois groups, rational points, and reduction
modulo primes. A central object is the \emph{arboreal Galois representation}
attached to a polynomial or rational map, introduced by Odoni and developed into
an active theory through the work of Boston--Jones, Stoll, Pink, and many others;
Jones' survey \cite{J13} gives a broad account, emphasizing the role of critical orbits and primitive prime divisors in controlling the size of the image.

We recall the basic construction. Let $K$ be a field, let $f \in K[x]$ have
degree $d \geq 2$, and fix a base point, which throughout this paper is $0$. For
each $n \geq 1$ the roots of the iterate $f^{\circ n}(x)$ in $K^{\mathrm{sep}}$
form the $n$-th level of a rooted preimage tree, where each root $\beta$ of
$f^{\circ(n-1)}$ is joined to the solutions of $f(x) = \beta$. The absolute
Galois group $\Gal(K^{\mathrm{sep}}/K)$ permutes these roots compatibly with the
tree structure, giving a tower of finite quotients
\[
  \mathcal{G}_n = \Gal\!\big(\textrm{Spl}_K(f^{\circ n})/K\big)
\]
together with embeddings $G_n \hookrightarrow \Aut(T_n)$, and, in the inverse
limit, the \emph{arboreal Galois representation}
$\Gal(K^{\mathrm{sep}}/K) \to \Aut(T_\infty)$. We assume here that the
iterates $f^{\circ n}$ are separable, so that each level has $d^n$ vertices,
$T_n$ is a regular $d$-ary tree, and $\Aut(T_n)$ is the $n$-fold iterated wreath
product of the symmetric group $S_d$. A guiding problem is then to
determine when the image $G_n$ is as large as possible, or at least of finite
index; this is a dynamical analogue of measuring the image of a Galois
representation attached to a classical arithmetic object.

The bulk of this theory has been developed over number fields, or more generally in characteristic zero or characteristic different from two. Even there, full maximality at every level is the exception rather than the rule: results typically establish {finite index}. Bush, Hindes, and Looper \cite{BHL17},
for instance, prove that the arboreal representations of certain prime-degree
unicritical polynomials over $\mathbb{Q}(\zeta_p)$ have finite index in the
infinite iterated wreath product of cyclic groups, with full surjectivity
obtained only in small degree examples. By contrast, the positive characteristic
theory is far less developed, and the results that exist stop short of
maximality. Hindes and Jones \cite{HJ20}, studying Riccati equations and
polynomial dynamics over function fields, establish primitive divisor phenomena
and {finite index} arboreal results, while emphasizing that characteristic
$p$ introduces genuine new obstructions---isotriviality and inseparability chief
among them. From a different direction, Pink \cite{P13-1, P13-2, P13-3}
determined profinite iterated \emph{monodromy} groups for quadratic polynomials
and morphisms in arbitrary characteristic. These works show that
positive characteristic arboreal dynamics is both tractable and subtle, but none
establishes that the arithmetic arboreal image is the {full} iterated
wreath product at every level for an explicit family in characteristic $p$.

The purpose of this paper is to prove such maximality results, for a family
naturally connected to Drinfeld modules. Let $q=p^e$ be an odd prime power, $\mathbb F_q[t]$ be the polynomial ring, with field of fraction $K = \mathbb{F}_q(t)$.  Set $d = q-1$, and consider the polynomials of \textbf{twisted
Carlitz type}
\[
  f_c(x) = c\,x^{d} + t \in K[x].
\]
The identity $\rho_t^{c}(x) = x\, f_c(x)$ relates $f_c$ to the rank-one twisted
Carlitz module $$\rho_t^{c}(x) = tx + c\,x^{q}.$$ Thus the first level of the
arboreal tower is determined by the $t$-torsion points of $\rho_t^{c}$. Our first main results show, for suitable choice of $c$,
that arboreal maximality persists at {every} level.

\begin{mainthm}[Theorem \ref{thm:conditional-wreath}+Remark \ref{maxcond}]
    Let $q=p^e\geq 5$ be an odd prime power, $d=q-1$, and $K=\mathbb F_q(t)$. Assume that \(M+d-1\not\equiv 0 \pmod p\) and that the Kummer class \([-t^{1-M}]\in K^\times/(K^\times)^d\) has order \(d\). Then for every \(n\ge 1\), the n-th iterated Galois group of $f_M(x)=t^Mx^d+t$ satisfies \( \mathcal{G}_n \cong C_d \wr C_d \wr \cdots \wr C_d,\) with \(n\) many factors. In particular, one can attain arboreal maximality for $f_M(x)$ if and only if $M$ satisfies the additional condition below:
$$
\operatorname{lcm}\left(
2,
\frac{d}{\gcd(d,1-M)}
\right)
=d.
$$
\end{mainthm}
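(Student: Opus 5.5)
Since $\mu_d=\mathbb F_q^\times\subset K$, every extension obtained by adjoining a $d$-th root is Kummer with cyclic group of order dividing $d$. The plan is to induct on $n$ and show $[K_n:K_{n-1}]=d^{d^{n-1}}$, where $K_n=\mathrm{Spl}_K(f_M^{\circ n})$. Because $\mathcal G_n$ always embeds into the iterated wreath product $C_d\wr\cdots\wr C_d$, this degree count gives the isomorphism.

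\textbf{Base case.} The roots of $f_M(x)=\beta$ are $\zeta\,((\beta-t)/t^M)^{1/d}$ with $\zeta\in\mu_d$. For $\beta=0$ the radicand is $-t^{1-M}$. By hypothesis $[-t^{1-M}]$ has order $d$ in $K^\times/(K^\times)^d$, so $\mathcal G_1\cong C_d$.

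\textbf{Inductive step.} $K_n$ is obtained from $K_{n-1}$ by adjoining $d$-th roots of $\delta_\beta=(\beta-t)t^{-M}$ as $\beta$ ranges over the $d^{n-1}$ roots of $f_M^{\circ(n-1)}$. By Kummer theory, maximality at level $n$ is equivalent to the classes $[\delta_\beta]$ generating a subgroup of $K_{n-1}^\times/(K_{n-1}^\times)^d$ of order $d^{d^{n-1}}$. It therefore suffices to show that for every prime $\ell\mid d$ and every nontrivial relation $\prod_\beta \delta_\beta^{a_\beta}\in (K_{n-1}^\times)^\ell$ we reach a contradiction. The standard route (Stoll/Jones style) uses primes rather than discriminants.

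\textbf{Producing primes.} I would locate a prime $\mathfrak p$ of $K_{n-1}$ dividing $\delta_\beta$ to exponent prime to $\ell$ and dividing no other $\delta_{\beta'}$. Over $\mathbb F_q[t]$, the elements $\beta-t$ are the roots of $f_M^{\circ(n-1)}(x+t)$. Their norm down to $K_{n-2}$ equals $f_M^{\circ (n-1)}(t)$, up to grouping along tree levels. The natural choice is the place $t=0$ (or $t=\infty$), where the valuation of the critical orbit $f_M^{\circ k}(0)$ can be computed explicitly from the Newton polygon. For instance, $f_M(0)=t$ and $f_M(t)=t^{M+d}+t$, and so on. The condition $M+d-1\not\equiv 0\pmod p$ should enter here. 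It guarantees that the relevant leading coefficient (essentially a derivative term $(M+d-1)t^{M+d-2}$ arising in a separability/ramification computation at a finite prime) does not vanish. That in turn gives a new prime that is totally or tamely ramified with ramification index divisible by $d$ in exactly one branch. I would then use the Galois action of $\mathcal G_{n-1}$, which is transitive on the level $n-1$ vertices by induction, to conclude that the relation lattice is $\mathcal G_{n-1}$-stable. Since the $\delta_\beta$ are conjugate, it is enough to show that a single $\delta_\beta$ is not an $\ell$-th power times the others. This reduces to checking that the "new" element $f_M^{\circ n}(0)$, suitably normalized by powers of $t$ and by $-t^{1-M}$, is not an $\ell$-th power in $K_{n-1}$. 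I expect this to be the main obstacle: it requires controlling ramification in $K_{n-1}$ to show that a prime of $\mathbb F_q[t]$ dividing the normalized critical value to exponent $1$ stays unramified in $K_{n-1}$. That in turn needs a primitive-prime-divisor/coprimality argument for the sequence $f_M^{\circ k}(0)$ (the relation $f_M^{\circ k}(0)\equiv t \pmod{t^{M+d}}$ and a gcd recursion should give the coprimality).

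\textbf{The "if and only if" part.} This is a separate computation of when the Kummer hypothesis on $[-t^{1-M}]$ can hold. Write $-1=\zeta^{d/2}$, a generator-power in $\mathbb F_q^\times$, and note that the classes of constants and of $t$ in $K^\times/(K^\times)^d$ are independent. Then $[-1]$ has order $2$ (since $-1$ is not a $d$-th power when $d$ is even, as $-1=g^{d/2}$ and $d\nmid d/2$), and $[t^{1-M}]$ has order $d/\gcd(d,1-M)$. Since the product lies in $\mathbb F_q^\times/(\mathbb F_q^\times)^d\times\mathbb Z/d$, its order is the lcm of the two orders. Requiring this lcm to equal $d$ gives the stated condition.
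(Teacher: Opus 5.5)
\textbf{Verdict: there is a genuine gap.} Your inductive framework is right, but you never prove that the prime your argument needs actually exists.

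\textbf{What matches the paper.} Your base case and the ``if and only if'' computation agree with the paper: the order of $[-t^{1-M}]$ is the lcm of the orders of $[-1]$ and $[t^{1-M}]$ in $K^\times/(K^\times)^d\cong\mathbb F_q^\times\times\bigoplus_P\mathbb Z/d$. The mechanism you eventually reach in the inductive step is exactly the paper's Kummer independence criterion (Proposition~\ref{prop:general-kummer-independence}). That mechanism is a prime $P$ of $\mathbb F_q[t]$ with $v_P(a_n)=1$ that is unramified in $K_{n-1}$. Then at some $\mathfrak p\mid P$ exactly one $\delta_\beta$ has valuation $1$ and the others are units, and this is transported by the transitive $\mathcal G_{n-1}$-action.

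\textbf{The missing step.} You never show that such a prime exists for every $n$, and this is where all the arithmetic lies. The natural way to make $P$ unramified in $K_{n-1}$ and prime to $t^M$ is to require $P\nmid a_1\cdots a_{n-1}$. Indeed, a multiple root of $f_M^{\circ(n-1)}$ modulo $P$ forces $P\mid a_m$ for some $m<n$. So what you need is a \emph{primitive simple divisor} of $a_n$.

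Coprimality and gcd recursions cannot supply this. They tell you which primes of $a_n$ are old, but say nothing about a new prime occurring to exponent exactly one. For other coefficients none need exist: compare the paper's $q=3$ example with $a_2=-t^3/(t+1)^2$.

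\textbf{How the paper fills it.} The paper gets the divisor from Mason--Stothers. Writing $a_n=tb_n$, one has $b_n-t^{E}b_{n-1}^{d}=1$ with $E=M+d-1$.
\begin{itemize}
\item The hypothesis $p\nmid E$ is precisely what excludes the degenerate case in which all three terms are $p$-th powers. This gives $\deg\operatorname{rad}(b_n)\ge Ed^{n-2}$.
\item The return-time argument modulo $P$ places every non-primitive prime of $b_n$ in $\prod_{r\mid n,\ r<n}b_r$. That product has total degree $<Ed^{\lfloor n/2\rfloor}/(d-1)^2$.
\item For $d\ge4$ this gives a primitive radical of degree $>\deg(b_n)/2$. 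This is the only place $q\ge5$ is used. It forces a primitive prime of multiplicity one.
\end{itemize}
Your proposal never uses $q\ge5$, and it attributes $p\nmid M+d-1$ to a ramification or derivative computation. Both are signs that this quantitative step is missing.

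\textbf{Two intermediate suggestions that would not work.}

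First, the place $t$, with a Newton polygon there, cannot isolate a single Kummer class. We have $v_t(a_m)=1$ for every $m$, so $t$ is never a primitive divisor. It also divides the coefficient $t^M$ when $M>0$. Under the first-level hypothesis it is already ramified in $K_1$, with index $d/\gcd(d,1-M)>1$.

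Second, the reduction ``it suffices that the normalized $a_n$ is not an $\ell$-th power in $K_{n-1}$'' is a Stoll-type criterion. It needs $\mathcal G_{n-1}$ to be an $\ell$-group, so that every nonzero $\mathcal G_{n-1}$-submodule of $\mathbb F_\ell^{R_{n-1}}$ contains the all-ones vector. For $d=q-1$ not a prime power this fails. For instance, with $q=7$ the regular module $\mathbb F_2[C_6]$ of $\mathcal G_1\cong C_6$ has nonzero submodules that miss the all-ones vector. So ruling out the product relation does not rule out the other relations. You need the single-prime isolation described above, which again rests on the primitive simple divisor.
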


\begin{mainthm}[Theorem \ref{thm:conjugate-full-wreath}]
    Let \(q=p^e\ge 7\) be an odd prime power,  $K=\mathbb F_q(t)$, \(d=q-1\), and let
\[
f_c(x)=cx^d+t\in K[x].
\]
Assume that
\(c\in (K^\times)^{d-1},
\textrm{ and }
v_t(c)=0.\)
Choose \(u\in K^\times\) such that \(u^{d-1}=c^{-1}\), and set
\[
F(x)=u^{-1}f_c(ux)=x^d+C,
\quad \textrm{ with }
C=\frac{t}{u}.
\]
Denote 
\(
K_n=\operatorname{Spl}_K(F^{\circ n})=\operatorname{Spl}_K(f_c^{\n})
\quad\text{and}\quad
\mathcal{G}_n=\operatorname{Gal}(K_n/K).
\)
Then for every \(n\ge1\), we have
\[
\mathcal{G}_n\cong
\underbrace{C_d\wr C_d\wr\cdots\wr C_d}_{n\text{ factors}}.
\]
\end{mainthm}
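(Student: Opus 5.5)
We will prove the second Main Theorem by induction on $n$, working throughout with the conjugated polynomial $F(x)=x^d+C$, $C=t/u$. Since $F=u^{-1}\circ f_c\circ u$, we have $F^{\circ n}(x)=u^{-1}f_c^{\circ n}(ux)$, so the two iterates have the same splitting field $K_n$. Since $d=q-1$ and $\mu_d=\mathbb F_q^\times\subset K$, each extension $K_n/K_{n-1}$ is obtained by adjoining $d$-th roots of the level-$(n-1)$ roots shifted by $C$: if $F^{\circ(n-1)}(\beta)=0$, the roots above $\beta$ are the $d$-th roots of $\beta-C$. Hence $\mathcal G_n$ embeds in $C_d\wr\mathcal G_{n-1}$ acting on the tree. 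By Kummer theory, equality $[K_n:K_{n-1}]=d^{d^{n-1}}$ holds if and only if the classes of the $d^{n-1}$ elements $\beta-C$ ($\beta$ a root of $F^{\circ(n-1)}$) are independent of full order in $K_{n-1}^\times/(K_{n-1}^\times)^d$. Concretely, this means: for every nonzero tuple of exponents $(e_\beta)$ modulo $d$, the product $\prod_\beta(\beta-C)^{e_\beta}$ is not a $d$-th power in $K_{n-1}$. Counting orders then gives $\mathcal G_n\cong C_d\wr\cdots\wr C_d$.

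The base case $n=1$ says that $-C=-t/u$ has order $d$ in $K^\times/(K^\times)^d$. This follows from $v_t(c)=0$: it gives $(d-1)v_t(u)=-v_t(c)=0$, so $v_t(u)=0$ and hence $v_t(-C)=1$, which is coprime to $d$.

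For the inductive step I would use a ramification (Capelli/Stoll-type) argument rather than computing all Kummer classes directly. The plan is to find, for each $n$, a prime $\mathfrak p$ of $K$ that ramifies in $K_n$ but not in $K_{n-1}$. The natural candidate divides the "critical value" $F^{\circ n}(0)$. Since $0$ is the only finite critical point (in characteristic $p$, $F'(x)=dx^{d-1}=-x^{d-1}$), the discriminant of $F^{\circ n}$ is, up to powers of $d$ and of earlier discriminants, a power of $F^{\circ n}(0)$. The reduction to a single element goes as follows. By the Galois-module structure of $K_{n-1}^\times/(K_{n-1}^\times)^d$ (an $\mathbb Z/d[\mathcal G_{n-1}]$-module generated by the orbit of one class $\beta-C$), the standard lemma reduces maximality of the top layer to showing that the norm
\[
N_{K_{n-1}/K}\prod_\beta(\beta-C)=\pm F^{\circ n}(0)\cdot(\text{unit})
\]
is not a $p'$-th power for each prime $\ell\mid d$. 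More precisely, I would use the Stoll-type criterion: it suffices that $F^{\circ n}(0)$ is not an $\ell$-th power times an element of $K_{n-1}$-norms, for each prime $\ell\mid d$. This is the step where the hypothesis $c\in(K^\times)^{d-1}$ is used: it makes $F$ monic with constant term $C$, so that $F^{\circ n}(0)$ is a genuine polynomial expression in $C$.

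To carry this out, I would write $u=a/b$ and track $F^{\circ n}(0)=C^{d^{n-1}}+\cdots+C$, looking for a primitive prime divisor. The $t$-adic valuation gives one: $v_t(C)=1$ and $v_t(F^{\circ k}(0))=1$ for all $k$. Since $t$ appears to exactly the first power in every $F^{\circ k}(0)$, this alone does not separate levels. So I would instead work at the place at infinity, or at primes dividing $F^{\circ n}(0)/F^{\circ(n-1)}(0)$-type quotients, and check that some prime appears with exponent prime to $d$ and is unramified in $K_{n-1}$.

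I expect this to be the main obstacle: producing, at every level, a prime of $K$ that divides $F^{\circ n}(0)$ to an exponent prime to $\ell$ and does not ramify in $K_{n-1}$. My first attempt would use degree growth: $\deg F^{\circ n}(0)$ grows like $d^{n-1}\deg C$ in $t$. Combined with an abc/Mason–Stothers bound (valid since the relevant polynomials are non-$p$-th powers, which needs the hypothesis $q\ge7$ to rule out small degenerate cases), this should show that $F^{\circ n}(0)$ cannot be $\ell$-th-power-times-earlier-primes. If Mason–Stothers runs into inseparability, I would fall back on a direct valuation computation at infinity showing that $v_\infty(F^{\circ n}(0))=d^{n-1}v_\infty(C)$ with $v_\infty(C)$ coprime to $d$, giving total ramification of order $d$ at a place above $\infty$ at each new level.
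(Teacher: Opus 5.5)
Your overall frame matches the paper: conjugate to $F$, embed $\mathcal G_n$ in $C_d\wr\mathcal G_{n-1}$, get the base case from $v_t(-C)=1$, then show the classes $[\beta-C]$, $\beta\in R_{n-1}$, are independent. The inductive step, however, has a genuine gap.

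\textbf{The norm reduction fails.} You say that, because the classes form one $\mathcal G_{n-1}$-orbit, it suffices that $\prod_\beta(\beta-C)=\pm F^{\circ n}(0)$ is not an $\ell$-th power for each prime $\ell\mid d$. That is Stoll's criterion, and it needs $\mathcal G_{n-1}$ to be an $\ell$-group. Only then does every nonzero $\mathbb F_\ell[\mathcal G_{n-1}]$-submodule of $\mathbb F_\ell^{R_{n-1}}$, such as the module of relations mod $\ell$, contain the all-ones vector. Here $\mathcal G_{n-1}$ is an iterated wreath product of $C_d$ with $d=q-1$, which has an odd prime factor unless $d$ is a power of $2$ (already $d=6$ for $q=7$).

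For a concrete failure, take $n=2$, an odd prime $\ell\mid d$, and $\mathcal G_1=\langle\sigma\rangle$ acting regularly on $R_1=\{\sigma^i\beta_0\}$. The vector $\sum_i(-1)^i[\sigma^i\beta_0]$ spans a $\mathcal G_1$-stable line that does not contain the all-ones vector. So a relation $\prod_i(\sigma^i\beta_0-C)^{(-1)^i}\in(K_1^\times)^\ell$ is perfectly compatible with $F^{\circ 2}(0)\notin(K_1^\times)^\ell$.

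A norm argument of this type is what the paper uses only for irreducibility of $F^{\circ n}$ (Capelli, Theorem~\ref{thm:F-iterates-irreducible}). It gives transitivity, not maximality. Also, $N_{K_{n-1}/K}\prod_\beta(\beta-C)$ is not the right object, since that product already lies in $K$.

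\textbf{What the paper does instead.} Your opening idea, a prime dividing $F^{\circ n}(0)$ that is unramified in $K_{n-1}$, is the right one, but it must be used locally (Proposition~\ref{prop:general-kummer-independence}). Suppose $v_P(F^{\circ n}(0))=1$ and $v_P(F^{\circ m}(0))=0$ for $m<n$. Then $P$ is unramified in $K_{n-1}$, and $C$ is a simple root of $F^{\circ(n-1)}$ mod $P$. Hence at a prime $\mathfrak p\mid P$ exactly one root $\beta^*$ has $v_{\mathfrak p}(\beta^*-C)=1$, and all others have valuation $0$. Any relation therefore has $e_{\beta^*}\equiv 0\pmod d$, and transitivity of $\mathcal G_{n-1}$ carries this to every $\beta$. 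No hypothesis on the order of $\mathcal G_{n-1}$ is needed.

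\textbf{The required prime is never produced.} You correctly flag this as the main obstacle, but your fallback at $\infty$ does not work, for three reasons. If $v_\infty(C)<0$, then $v_\infty(F^{\circ n}(0))=d^{n-1}v_\infty(C)$, which is divisible by $d$ for $n\ge2$. Moreover, $v_\infty(C)$ need not be prime to $d$: for $u=t+1$, $C=t/(t+1)$ has $v_\infty(C)=0$. Finally, one ramified place without an isolation argument contributes at most a single $C_d$, not $C_d^{R_{n-1}}$.

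What is needed is a primitive divisor of $F^{\circ n}(0)$ of multiplicity prime to $d$; the paper gets multiplicity $1$. Mason--Stothers alone only bounds the radical and does not control multiplicities. The paper's argument (Proposition~\ref{lem:primitive_place_F}) runs as follows.
\begin{itemize}
\item Write $C=A/B$ with $v_t(A)=1$ and $t\nmid B$. Then $F^{\circ m}(0)=AH_m/B^{d^{m-1}}$, where $H_{m+1}=A^{d-1}H_m^d+B^{d^m-1}$.
\item Apply Mason--Stothers, which is non-degenerate because $v_t(A^{d-1}H_n^d)=d-1\not\equiv 0\pmod p$. This gives $\deg S_{n+1}>2M+\deg H_n$, where $S_{n+1}$ is the primitive radical of $H_{n+1}$.
\item If no prime of $S_{n+1}$ were simple, $S_{n+1}$ would divide the nonzero polynomial $J_n=B\bigl((d-1)A'H_n+dAH_n'\bigr)-(d^n-1)AH_nB'$. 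But $\deg J_n\le 2M+\deg H_n-1$, a contradiction.
\item The first step uses the Wronskian $AB'-A'B$ in the same way.
\end{itemize}
The hypothesis $q\ge7$ enters through these degree inequalities, not, as you suggest, through ruling out $p$-th powers.
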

Note that when $q\geq 7$, the two families intersect only at $M=0$ and $c=1$. In contrast to the finite-index results available in positive characteristic, our theorems yield the full iterated wreath product at every level for explicit families over $\mathbb F_q(t)$. The method combines three ingredients. A Kummer independence criterion
(Proposition~\ref{prop:general-kummer-independence}) reduces maximality at each level to the existence of a place isolating a single Kummer class; primitive simple divisor theorems for the critical orbit (Theorem~\ref{thm:PSD} and Proposition~\ref{lem:primitive_place_F}), proved via
the Mason--Stothers theorem over $\mathbb{F}_q[t]$; and
Capelli's lemma controls the irreducibility of iterates.

After this work was completed, we learned of the paper of Adams and 
Hyde~\cite{AH25}, which studies the profinite iterated monodromy group of a 
unicritical polynomial $f(x) = ax^d + b$ over a field $K$ with $d$ coprime to 
$\operatorname{char} K$ via the preimage tower of a \emph{transcendental} base 
point $s$, that is, the Galois theory of $f^{\circ n}(x) = s$ over $K(s)$. In 
the post-critically infinite case they show the geometric image is the full 
iterated cyclic wreath product $[C_d]^\infty$ (\cite[Prop.~3.14]{AH25}) and 
determine the constant field extension $\widehat{K}_{f,\ell} = K(\zeta_d)$ 
(\cite[Prop.~6.6]{AH25}). Regarding $f_c$ as a polynomial over 
$K = \mathbb{F}_q(t)$ and applying these results, the generic image is 
maximal at every level, the constant field extension being trivial since 
$\mu_{q-1} = \mathbb{F}_q^\times \subset \mathbb{F}_q$. Our tower is the 
specialization of theirs at $s = 0$: we study $f_c^{\circ n}(x) = 0$ over 
$\mathbb{F}_q(t)$, the base point $0$ being the $t$-torsion point of the 
twisted Carlitz module $\rho_t^c$. Maximality of the generic image does not 
imply maximality after specialization, as the arboreal image may shrink at a 
special point; the primitive simple divisor theorems in Sections~\ref{psd1} 
and~\ref{psd2}  are precisely what guarantee no such collapse occurs at $0$. Note that the specialization problem is not addressed in~\cite{AH25} (see their \S1.3).

Our second main result is the relationship between the arboreal Galois image of $f_c$ and the
adelic Galois image of the associated twisted Carlitz module $\rho_t^{c}$. Although
$\rho_t^{c}(x) = x f_c(x)$ links the two theories at the first level, their higher
towers encode different arithmetic. Using Gekeler's defect formula \cite{GEKELER2016316},
see Section 2.5.2, we provide examples showing that adelic surjectivity and arboreal
maximality are logically independent: we construct a twisted Carlitz module with
surjective adelic Galois image while its corresponding polynomial is arboreal non-maximal
(Proposition~\ref{adelsurj}+ Proposition\ref{nonmax}). Conversely, we construct a polynomial with maximal arboreal Galois image while its corresponding twisted Carlitz module has non-surjective adelic Galois
image (Proposition~\ref{arbmaxnonsurj}). The two given examples motivate us to explore the following local implication. 

\begin{mainthm}[Theorem \ref{arboreal-max}]
    Let $q=p^e$ be an odd prime power and $d=q-1$.
    Let \(\rho_t^c(x)=tx+cx^q\) be the twisted Carlitz module corresponding to $f_c(x)=cx^{q-1}+t$. Suppose that the first arboreal level
of \(f_c\) is maximal, i.e.
\[
\operatorname{Gal}(K(f_c^{-1}(0))/K)\cong C_d.
\]
Then the \(t\)-adic Galois representation attached to \(\rho_t^c\) is surjective, i.e, \(\operatorname{def}(\Delta,t^m)=1 \ \text{for every}\ m\ge1,\) where \(\Delta\in \mathbb F_q[t] \) is the \(d\)-power-free representative of the class of \(c\) in
\(K^\times/(K^\times)^d\). In other words, arboreal maximality of \(f_c\) implies \(t\)-adic surjectivity of the associated twisted Carlitz module $\rho_t^c$.
\end{mainthm}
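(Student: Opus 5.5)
The plan is to translate the hypothesis into a Kummer condition on $c$ and then show that this condition forces Gekeler's defect to be $1$ at every $t$-power level.

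\emph{Step 1: the first arboreal level.} The roots of $f_c(x)=cx^d+t$ are the $d$-th roots of $-t/c$. Since $\mu_d=\mathbb F_q^\times\subset K$, Kummer theory gives
\[
\operatorname{Gal}(K(f_c^{-1}(0))/K)\cong C_d
\]
if and only if the class $[-t/c]$ has exact order $d$ in $K^\times/(K^\times)^d$. Write $c=\Delta\cdot w^d$ with $\Delta\in\mathbb F_q[t]$ $d$-power-free. Then the hypothesis says that $[-t\Delta^{-1}]$, equivalently $[-t\Delta^{d-1}]$, has order $d$.

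\emph{Step 2: make the defect criterion explicit.} I will recall from Section 2.5.2 the shape of Gekeler's defect formula for rank-one twisted Carlitz modules. For a monic $\mathfrak n$, the image of $\operatorname{Gal}(K(\rho^c[\mathfrak n])/K)\to(\mathbb F_q[t]/\mathfrak n)^\times$ has index $\operatorname{def}(\Delta,\mathfrak n)$. The reason is that $\rho^c$ is the twist of the Carlitz module $C$ by a $d$-th root $\delta$ of $\Delta$ (up to sign normalisations). Hence the $\rho^c$-torsion field is contained in the compositum $K(C[\mathfrak n])\cdot K(\delta)$, and the Galois action is the cyclotomic action multiplied by the Kummer character $\chi_\Delta$ of $\Delta$, viewed in $\mathbb F_q^\times\subset(\mathbb F_q[t]/\mathfrak n)^\times$. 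So the defect is nontrivial exactly when $K(\delta)$, or a subextension of it, lies inside the Carlitz cyclotomic field $K(C[\mathfrak n])$ in a way that collapses the product character. For $\mathfrak n=t^m$, the field $K(C[t^m])$ is totally ramified at $t$ and unramified outside $t$ and $\infty$. Its unique subfield of degree dividing $d$ that is Kummer over $K$ is $K(\sqrt[d]{-t})$, since $C_t=tx+x^q$ gives $\lambda^{d}=-t$. So I expect the formula to reduce to the statement that $\operatorname{def}(\Delta,t^m)$ equals the order of the class $[\Delta]$ modulo the subgroup generated by $[-t]$, measured against how much of $\langle[-t]\rangle$ is hit. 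More concretely, it should be the index $[\langle[-t],[\Delta]\rangle:\langle[-t\Delta^{-1}]\rangle]$. I will pin down this form from the paper's statement of Gekeler's formula rather than rederive it.

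\emph{Step 3: conclude.} The subgroup $\langle[-t],[\Delta]\rangle$ lives in $K^\times/(K^\times)^d$, and every element of it has order dividing $d$. By Step 1, $[-t\Delta^{-1}]$ already has order $d$. The main task is to show that $\langle[-t],[\Delta]\rangle=\langle[-t\Delta^{-1}]\rangle$ is forced. This need not hold in general, since the group generated by two classes may be $C_d\times C_d$, so I must use the precise form of the defect. Directly, the image of Galois on $\rho^c[t^m]$ is the set of $\sigma\mapsto\omega(\sigma)\chi_\Delta(\sigma)^{-1}$, where $\omega$ is the cyclotomic character. Its reduction mod $t$, given by $\omega\bmod t=\chi_{-t}$ and landing in $\mathbb F_q^\times$, equals $\chi_{-t\Delta^{-1}}$, which is surjective onto $\mathbb F_q^\times$ by hypothesis. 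The image is therefore a closed subgroup of $(\mathbb F_q[t]/t^m)^\times=\mathbb F_q^\times\times U_1$ that surjects onto $\mathbb F_q^\times$. Its intersection with $U_1$ contains the image of $\operatorname{Gal}(\bar K/K(\delta,\sqrt[d]{-t}))$, which is all of $U_1$. This holds because $K(C[t^m])$ over $K(\sqrt[d]{-t})$ is totally ramified at $t$ with group $U_1$, and $K(\delta,\sqrt[d]{-t})/K(\sqrt[d]{-t})$ has degree prime to $p$, so it is linearly disjoint from that $p$-extension. Hence the image is everything, and $\operatorname{def}(\Delta,t^m)=1$ for all $m$.

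The main obstacle is Step 2: identifying the $t$-adic Galois action of $\rho^c$ as the twist $\omega\cdot\chi_\Delta^{\pm1}$ with the correct sign and exponent, so that its reduction mod $t$ is exactly the Kummer character of $-t/c$ rather than of $-tc$. I would settle this first on the level-one identity $\rho^c_t(x)=xf_c(x)$, which exhibits the mod-$t$ character directly as the Kummer character of $-t/c$.
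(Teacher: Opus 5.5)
Your proof is correct, but it takes a different route from the paper's.

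\textbf{The paper's route.} The paper never looks at the Galois action on $\rho^c[t^m]$. It writes $\Delta=\gamma^{k_0}t^{k_t}\prod_j Q_j^{k_j}$ and uses Gekeler's closed formula (Theorem~\ref{Gekeler}) twice: once to reduce to $\operatorname{def}(\Delta,t)$, and once to compute it. It then argues by contradiction with a prime $\ell\mid\operatorname{def}(\Delta,t)$. If $t\nmid\Delta$, one gets both $\ell\mid\deg\Delta$ and $\ell\mid\deg\Delta-1$. If $t\mid\Delta$, the conditions $\ell\mid k_j$, $\ell\mid\deg\Delta-1$ (hence $k_t\equiv 1\pmod\ell$) and $\ell\mid k_0^\ast$ are translated into $-t/\Delta\in(K^\times)^\ell$, contradicting the order-$d$ hypothesis.

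\textbf{Your route.} You compute the image itself. With the action $\omega\chi_\Delta^{-1}$, the image in $(\mathbb F_q[t]/t^m)^\times=\mathbb F_q^\times\times U_1$ contains $U_1$. This follows because $K(\delta,C[t])$ and $K(C[t^m])$ have coprime degrees over $K(C[t])$, hence are linearly disjoint. The image also surjects onto $\mathbb F_q^\times$, because its reduction mod $t$ is the action on $\rho^c[t]=\{0\}\cup f_c^{-1}(0)$, which is exactly the first arboreal level.

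\textbf{What each approach buys.} Your argument uses only the surjectivity of the Carlitz character at $t$-power level, not Gekeler's theorem. It also avoids the parity bookkeeping behind $k_0^\ast$, which the paper disposes of in a single sentence. And it gives the sharper identity $\operatorname{def}(\Delta,t^m)=d/|\mathcal G_1|$, so the implication is actually an equivalence. Once you observe that $\operatorname{def}(\Delta,t)$ is by definition the index of $\mathcal G_1$ in $\mathbb F_q^\times$, the $m$-independence in Gekeler's formula alone already finishes the proof. The paper's version, in exchange, stays inside the explicit-formula framework it uses for the other examples in that section.

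\textbf{Two small remarks.} First, the obstacle you flag at the end is not really one. Neither the mod-$t$ identification nor the $U_1$ step depends on which power of $\chi_\Delta$ occurs, since $\chi_\Delta$ has order prime to $p$. Second, the index formula you guess in Step 2 is wrong in general. For $q=7$ and $\Delta=t+1$ it gives $6$, whereas $\operatorname{def}(t+1,t)=1$ (as your own Step 3, or the paper's Remark, shows). Step 3 does not rely on it, so no harm is done.
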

Finally, in the appendix we record an eventual primitive simple divisor theorem for polynomial $f_c(x)=cx^d+t$ with $d\geq 3,\quad p\nmid d, \textrm{ and }c\in \mathbb F_q[t]\setminus \{0\}$. This result can serve as a function field arithmetic application of \cite{T13}

\section*{Acknowledgements}
C.-H.\ Chen was supported by the CAMS Fellowship for 2026--2027 cycle.

\section{Preliminaries}

\subsection{The Mason--Stothers Theorem over $\mathbb{F}_q[t]$}

The following is the polynomial form of the well-known function-field \(abc\) theorem, see~\cite{S00} for an elementary proof.

\begin{thm}[Mason--Stothers theorem]\label{thm:mason-stothers}
Let \(k\) be a field, and let \(A,B,C\in k[t]\) be nonzero pairwise coprime polynomials such that \(A+B+C=0.\)
Then either
\[
A'=B'=C'=0,\quad \textrm{or}\quad \max\{\deg A,\deg B,\deg C\}
\le
\deg \operatorname{rad}(ABC)-1.
\]
\end{thm}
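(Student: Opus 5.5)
The statement is the classical Mason--Stothers theorem, and the plan is the standard Wronskian/derivative argument. We may divide out by a common leading coefficient, and nothing changes if we replace $C$ by $-C$, so we write the relation as $A+B=C$ with $A,B,C$ pairwise coprime. If $A'=B'=C'=0$ there is nothing to prove. Otherwise, differentiating gives $A'+B'=C'$, and at least two of the derivatives are nonzero: if only one were nonzero, the relation $A'+B'=C'$ would force it to vanish as well.

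The key object is the Wronskian $W = AB' - A'B$. Using $C=A+B$ and $C'=A'+B'$, one checks that $W = AC' - A'C = CB' - C'B$, up to sign. The first step is to show $W\neq 0$. If $W=0$, then $A'B=AB'$. Since $\gcd(A,B)=1$, $A$ divides $A'$, and $\deg A'<\deg A$ then forces $A'=0$. Likewise $B'=0$, and hence $C'=0$, which contradicts our assumption. So $W\neq 0$.

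The second step is divisibility. For each irreducible $\pi$ with $\pi^e\,\|\,A$, write $A=\pi^e A_1$. Then $\pi^{e-1}$ divides $A'$, and so $\pi^{e-1}$ divides $W$. Therefore $A/\operatorname{rad}(A)$ divides $W$, and the same holds for $B$ and $C$ by the symmetric expressions for $W$. Pairwise coprimality gives
\[
\frac{ABC}{\operatorname{rad}(ABC)} \,\Big|\, W .
\]
This step requires the exponent bookkeeping to be done with $\pi^{e-1}$ rather than with $e$ as an integer in $k$. That way the argument survives in characteristic $p$, where $e$ may vanish in $k$.

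The final step is the degree comparison. We have $\deg W\le \deg A+\deg B-1$. Combined with the divisibility, this gives $\deg A+\deg B+\deg C-\deg\operatorname{rad}(ABC)\le \deg A+\deg B-1$, so $\deg C\le \deg\operatorname{rad}(ABC)-1$. Using the other two expressions for $W$ yields the same bound for $\deg A$ and $\deg B$. The argument works over any field, since $W\neq0$ used only $\deg A'<\deg A$ and coprimality. The only delicate point is the characteristic-$p$ justification that $W\neq 0$, and coprimality handles it as described.
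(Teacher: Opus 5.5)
Your proof is correct, and it follows the standard Wronskian route. The paper gives no proof of its own and defers to Snyder's elementary proof \cite{S00}, which argues the same way, with your divisor $ABC/\operatorname{rad}(ABC)$ of $W=AB'-A'B$ playing the role of the product $\gcd(A,A')\gcd(B,B')\gcd(C,C')$ used there. Tracking $\pi^{e-1}\mid A'$ rather than the integer $e$ is exactly what keeps the degree bound valid in characteristic $p$ and over an arbitrary field.
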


Here \(\operatorname{rad}(F)\) denotes the product of the distinct irreducible factors of \(F\). In characteristic \(p>0\), the exceptional case \(A'=B'=C'=0\) is equivalent to \(A,B,C\in k[t^p]\).

\subsection{Arboreal Notation}

Throughout this paper, let \(q=p^e\) be an odd prime power, we further require $q\geq 5$ in section 4, and $q\geq 7$ in section 5. We write \(K=\mathbb{F}_q(t), \ \textrm{ and } \mathcal{R}=\mathbb{F}_q[t].\) Except in Appendix A, we set $d=q-1$, so \(\mu_d=\mathbb{F}_q^\times\). For each finite place \(P\) of \(K\), we denote by \(v_P\) the corresponding normalized discrete valuation. In particular, \(v_t\) denotes the valuation associated with the prime polynomial \(t\).\\
For a polynomial \(f(x)\in K[x]\), we denote by \(f^{\circ n}\) its \(n\)-fold iterate and by \(\operatorname{Spl}_K(f)\) its splitting field over \(K\). For each integer \(n\ge1\), we write \(\ R_n=\{\alpha\in\overline{K}:f^{\n}(\alpha)=0\}\) for the set of roots of \(f^{\circ n}(x)\). If \(K_n=\operatorname{Spl}_K(f^{\circ n})\), we write \(\mathcal{G}_n=\operatorname{Gal}(K_n/K)\). If \(L\) is a field and \(m\ge1\), we write \((L^\times)^m\) for the subgroup of \(m^{th}\) powers in \(L^\times\).\\
Finally, we recall the convention for semi-direct products and wreath products. Let a group \(H\) act on a finite set \(S\), and let \(G^S\) denote the direct product of copies of \(G\) indexed by \(S\). We let \(H\) act on \(G^S\) by permuting the factors: \[ (h\cdot a)_s=a_{h^{-1}s}, \quad \textrm{ where }h\in H,\ a=(a_s)_{s\in S}\in G^S. \] The corresponding semidirect product is denoted \( G^S\rtimes H, \) with multiplication \[ (a,h)(b,k)=\bigl(a(h\cdot b),hk\bigr). \]
In this paper, the relevant case is \(G=C_d\), where \(C_d\) is the cyclic group of order \(d\). If \(H\) acts on a finite set \(S\), we write \[ C_d\wr H:=C_d^S\rtimes H \] for the corresponding permutational wreath product. Iteratively, the full cyclic wreath product of height \(n\) is defined by \[ W_1=C_d,\textrm{ and } W_n=C_d^{S_{n-1}}\rtimes W_{n-1}\quad \textrm{ for } n\ge2, \] where \(S_{n-1}\) is the set of vertices at level \(n-1\) of the rooted \(d\)-ary tree, and \(W_{n-1}\) acts on \(S_{n-1}\) by its natural action. Thus \[ W_n\cong \underbrace{C_d\wr C_d\wr\cdots\wr C_d}_{n\text{ factors}}. \] When we say that the arboreal Galois group at level \(n\) is the full cyclic wreath product, we mean that \( \mathcal{G}_n\cong W_n. \)

\subsection{Kummer Theory}

We recall the form of Kummer theory that will be used throughout the paper; see
\cite[Chapter~8, Sections 8--9]{Lang2002}.

\begin{thm}[Kummer Theory]
\label{Kummer-Theory}
Let $L$ be a field and let $d\ge 2$ be an integer such that
$\operatorname{char}(L)\nmid d$ and $\mu_d\subset L$.
Let $a_1,\dots,a_r\in L^\times$ and set \(
E=L\!\left(\sqrt[d]{a_1},\dots,\sqrt[d]{a_r}\right).\)
If \(\Gamma=\langle [a_1],\dots,[a_r]\rangle\subset
L^\times/(L^\times)^d,\) then \(\operatorname{Gal}(E/L) \cong
\operatorname{Hom}(\Gamma,\mu_d).\) \\
In particular, if the classes \( [a_1],\dots,[a_r]\) are linearly independent in \(L^\times/(L^\times)^d,\) then \\
\(\operatorname{Gal}(E/L)\cong (C_d)^r.\)
\end{thm}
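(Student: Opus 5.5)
The plan is to prove this classical statement directly via the Kummer pairing, as in \cite[Chapter~8]{Lang2002}. Since $\operatorname{char}(L)\nmid d$, each polynomial $x^d-a_i$ is separable. Because $\mu_d\subset L$, all its roots are $\zeta\alpha_i$ with $\zeta\in\mu_d$ and $\alpha_i^d=a_i$ a fixed root. Hence $E=L(\alpha_1,\dots,\alpha_r)$ is a finite Galois extension of $L$.

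For any $a\in L^\times$ whose class lies in $\Gamma$, write $a=\prod a_i^{k_i}\cdot b^d$ with $b\in L^\times$. Then $\alpha:=\prod\alpha_i^{k_i}\, b\in E$ satisfies $\alpha^d=a$, so $E$ contains a $d$-th root of every element of $\Gamma$. Define
\[
\langle\,\cdot\,,\,\cdot\,\rangle:\operatorname{Gal}(E/L)\times\Gamma\to\mu_d,\qquad \langle\sigma,[a]\rangle=\sigma(\alpha)/\alpha .
\]
We check the following properties in order.
\begin{itemize}
\item The value lies in $\mu_d$, since $(\sigma(\alpha)/\alpha)^d=\sigma(a)/a=1$.
\item It does not depend on the choice of $\alpha$: any other root is $\zeta\alpha$ with $\zeta\in\mu_d\subset L$, and $\zeta$ is fixed by $\sigma$.
\item It does not depend on the representative of $[a]$: replacing $a$ by $ab^d$ replaces $\alpha$ by $\alpha b$ with $b\in L$, which again cancels.
\item It is multiplicative in $[a]$, since $d$-th roots multiply.
\item It is multiplicative in $\sigma$, since $\sigma\tau(\alpha)/\alpha=\sigma(\tau(\alpha)/\alpha)\cdot\sigma(\alpha)/\alpha$ and $\tau(\alpha)/\alpha\in\mu_d$ is fixed by $\sigma$.
\end{itemize}

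Next we show the pairing is nondegenerate on both sides. If $\langle\sigma,[a]\rangle=1$ for all $[a]\in\Gamma$, then $\sigma$ fixes every $\alpha_i$, so $\sigma=\mathrm{id}$ on $E$. If $\langle\sigma,[a]\rangle=1$ for all $\sigma$, then $\alpha$ is fixed by $\operatorname{Gal}(E/L)$, so $\alpha\in L$ by Galois theory, and therefore $[a]=1$ in $L^\times/(L^\times)^d$. This gives injections
\[
\operatorname{Gal}(E/L)\hookrightarrow\operatorname{Hom}(\Gamma,\mu_d)\quad\text{and}\quad\Gamma\hookrightarrow\operatorname{Hom}(\operatorname{Gal}(E/L),\mu_d).
\]
In particular $\operatorname{Gal}(E/L)$ is abelian of exponent dividing $d$, and $\Gamma$ is finite, being generated by $r$ elements of order dividing $d$.

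The only step needing a counting argument is upgrading these injections to an isomorphism. We use that $\mu_d$ is cyclic of order exactly $d$, because $x^d-1$ is separable when $\operatorname{char}(L)\nmid d$. Consequently $|\operatorname{Hom}(A,\mu_d)|=|A|$ for every finite abelian group $A$ of exponent dividing $d$, by decomposing $A$ into cyclic factors. The two injections then give
\[
|\operatorname{Gal}(E/L)|\le|\Gamma|\le|\operatorname{Gal}(E/L)|,
\]
so the first injection is a bijection and $\operatorname{Gal}(E/L)\cong\operatorname{Hom}(\Gamma,\mu_d)$.

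For the final assertion, interpret ``linearly independent'' in the natural sense: each $[a_i]$ has order $d$ and $\Gamma=\bigoplus_i\langle[a_i]\rangle\cong(C_d)^r$. Then
\[
\operatorname{Hom}(\Gamma,\mu_d)\cong\operatorname{Hom}(C_d,\mu_d)^r\cong(C_d)^r,
\]
using again that $\mu_d\cong C_d$. No step is a genuine obstacle; the point requiring the most care is the hypotheses. The condition $\mu_d\subset L$ is what makes the pairing independent of the choice of root, and the condition $\operatorname{char}(L)\nmid d$ gives both separability and $|\mu_d|=d$.
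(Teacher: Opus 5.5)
Your argument is correct and is essentially the standard Kummer-pairing proof from Lang that the paper cites in place of giving its own proof. It has the same steps: well-definedness and bilinearity of $\langle\sigma,[a]\rangle=\sigma(\alpha)/\alpha$, nondegeneracy on both sides, and the count $|\operatorname{Hom}(A,\mu_d)|=|A|$ for finite abelian $A$ of exponent dividing $d$. Your reading of ``linearly independent'' (the $[a_i]$ give $\Gamma\cong(\mathbb{Z}/d\mathbb{Z})^r$) also matches how the paper uses the term in Proposition~\ref{prop:general-kummer-independence}.
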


\begin{thm}
\label{kkummer-thm-used}
Let \(f_c(x) = cx^{d} + t\in K[x],\ c\in K^{\times}.\) Assume that \(f_c^{\circ m}(x)\) is separable over \(K\) for every \(1\leq m\leq n\). Then \(\mathcal{G}_{n}\) embeds naturally into the \(n\)-fold iterated wreath product \(C_{d}\wr C_{d}\wr \dots \wr C_{d}\) with \(n\) factors. More precisely, for every \(n\geq 2\), there is a natural embedding \[\mathcal{G}_{n}\hookrightarrow (C_{d})^{R_{n - 1}}\rtimes \mathcal{G}_{n - 1},\]
where \(\mathcal{G}_{n - 1}\) acts on \(R_{n - 1}\) by its natural action on the roots of \(f^{\circ (n-1)}(x)\).
\end{thm}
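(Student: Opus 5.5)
The plan is to build the embedding one level at a time, using the tree structure of the roots together with Kummer theory over $K_{n-1}$. Throughout, $\mu_d=\mathbb F_q^\times\subset K$ and $p\nmid d$ (since $d=q-1$), so Theorem~\ref{Kummer-Theory} applies over every intermediate field.

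First consider $n=1$. The roots of $f_c(x)=cx^d+t$ are the $d$-th roots of $-t/c$, so $K_1=K(\sqrt[d]{-t/c})$. Theorem~\ref{Kummer-Theory} then gives $\mathcal G_1\cong \operatorname{Hom}(\langle[-t/c]\rangle,\mu_d)$, which is cyclic and embeds in $C_d$. Concretely, each $\sigma\in\mathcal G_1$ acts by $\alpha\mapsto\zeta_\sigma\alpha$. This identifies $\mathcal G_1$ with a subgroup of $C_d=W_1$, and the action on $R_1$ is the regular action of $\mu_d$ on a $\mu_d$-torsor.

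Now let $n\ge2$. For each $\beta\in R_{n-1}$, the fiber $f_c^{-1}(\beta)$ is the set of roots of $cx^d+t-\beta$, namely the $d$-th roots of $a_\beta:=(\beta-t)/c$. Here $a_\beta\ne0$, because $\beta=t$ would make $0$ a root of $f_c^{\circ n}$ lying over $\beta$. More importantly, separability of $f_c^{\circ n}$ forces $a_\beta\neq0$ and makes the fibers disjoint of size $d$. Hence $K_n=K_{n-1}(\sqrt[d]{a_\beta}:\beta\in R_{n-1})$. Restriction gives the exact sequence $1\to H\to\mathcal G_n\to\mathcal G_{n-1}\to1$ with $H=\operatorname{Gal}(K_n/K_{n-1})$. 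By Theorem~\ref{Kummer-Theory} over $L=K_{n-1}$, each $\tau\in H$ acts on the fiber above $\beta$ by multiplication by some $\zeta_{\tau,\beta}\in\mu_d$. The map $\tau\mapsto(\zeta_{\tau,\beta})_\beta$ is an injective homomorphism $H\hookrightarrow C_d^{R_{n-1}}$, injective because $K_n$ is generated by these roots.

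Next I would define the embedding of all of $\mathcal G_n$ rather than only $H$. For each $\beta$ fix a root $\gamma_\beta$ with $f_c(\gamma_\beta)=\beta$. For $\sigma\in\mathcal G_n$, the element $\sigma(\gamma_\beta)$ lies over $\sigma(\beta)$, so $\sigma(\gamma_\beta)=\zeta_{\sigma,\beta}\,\gamma_{\sigma\beta}$ for a unique $\zeta_{\sigma,\beta}\in\mu_d$. Since $\sigma$ fixes $\mu_d\subset K$, it sends $\zeta\gamma_\beta$ to $\zeta\zeta_{\sigma,\beta}\gamma_{\sigma\beta}$. Thus $\sigma$ is determined by its image $\bar\sigma\in\mathcal G_{n-1}$ together with the tuple $(\zeta_{\sigma,\beta})$. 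I then map $\sigma\mapsto(a_\sigma,\bar\sigma)$ with $(a_\sigma)_{s}=\zeta_{\sigma,\bar\sigma^{-1}s}$, the indexing chosen to match the paper's convention $(h\cdot a)_s=a_{h^{-1}s}$.

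The cocycle identity $\zeta_{\sigma\tau,\beta}=\zeta_{\tau,\beta}\,\zeta_{\sigma,\tau\beta}$ follows from applying $\sigma$ to $\tau(\gamma_\beta)$ and again using that $\sigma$ fixes $\mu_d$. This identity is exactly what makes the map a homomorphism into $C_d^{R_{n-1}}\rtimes\mathcal G_{n-1}$ with the stated multiplication. Injectivity is clear, since $(a_\sigma,\bar\sigma)$ determines the action of $\sigma$ on $R_n$, which generates $K_n$.

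Finally, induction on $n$ together with the inclusion $\mathcal G_{n-1}\hookrightarrow W_{n-1}$ (compatible with the actions on $R_{n-1}$ and $S_{n-1}$ via the tree identification) gives $C_d^{R_{n-1}}\rtimes\mathcal G_{n-1}\hookrightarrow C_d^{S_{n-1}}\rtimes W_{n-1}=W_n$. The only real obstacle is the bookkeeping in the cocycle verification: getting the index convention and the order of $\sigma$ and $\tau$ consistent with the semidirect product law. There is no deep arithmetic input beyond separability and $\mu_d\subset K$.
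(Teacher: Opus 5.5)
Your proposal is correct and follows the same route as the paper's proof. Both use Kummer theory at level one and the fiber description $\{\zeta\gamma_\beta:\zeta\in\mu_d\}$ over each $\beta\in R_{n-1}$, then build the map $\mathcal{G}_n\to (C_d)^{R_{n-1}}\rtimes\mathcal{G}_{n-1}$, deduce injectivity from the fact that $R_n$ generates $K_n$, and iterate. Your cocycle identity $\zeta_{\sigma\tau,\beta}=\zeta_{\tau,\beta}\,\zeta_{\sigma,\tau\beta}$ and indexing $(a_\sigma)_s=\zeta_{\sigma,\bar\sigma^{-1}s}$ check out against the paper's multiplication law, and they make explicit the homomorphism property that the paper asserts without verification.
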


\begin{proof}
Since \(\mu_{d} \subset K\) and \(p\nmid d\), all \(d^{th}\) root extensions appearing below are tame Kummer extensions. At the first level, the equation \(f(x)=0\) is equivalent to \(x^{d} = -\frac{t}{c}\). Thus \(K_{1}/K\) is generated by a \(d^{th}\) root of an element of \(K^{\times}\). Since \(\mu_{d}\subset K\), Theorem \ref{Kummer-Theory} gives \(\mathcal{G}_{1}\hookrightarrow C_{d}\). Now let \(n\geq 2\). For each \(\beta \in R_{n-1}\), the points in the fiber above \(\beta\) are the solutions of \(f(x)=\beta\). Equivalently, we have \(x^{d} = \frac{\beta - t}{c}\). Choose one solution \(\gamma_{\beta}\) satisfying \(\gamma_{\beta}^{d} = \frac{\beta - t}{c}\). Since \(\mu_{d}\subset K\), the full fiber above \(\beta\) is \(\{\zeta \gamma_{\beta}:\zeta \in \mu_{d}\}\). Therefore any automorphism in \(\operatorname{Gal}(K_{n}/K_{n-1})\) acts on this fiber by multiplication by an element of \(\mu_{d}\). Hence
\[\operatorname{Gal}(K_{n}/K_{n-1})\hookrightarrow \prod_{\beta\in R_{n-1}}\mu_{d}\cong (C_{d})^{R_{n-1}}.\] An element of \(\mathcal{G}_{n}\) also permutes the fibers according to its restriction to \(\mathcal{G}_{n-1}\). Thus the action on the level-\(n\) preimage tree gives a natural homomorphism \[\mathcal{G}_{n}\longrightarrow (C_{d})^{R_{n-1}}\rtimes \mathcal{G}_{n-1}.\]
This homomorphism is injective because \(K_{n}\) is generated over \(K\) by the roots in \(R_{n}\). Hence \(\mathcal{G}_{n}\hookrightarrow (C_{d})^{R_{n-1}}\rtimes \mathcal G_{n-1}.\) Iterating this embedding gives
\( \mathcal{G}_{n}\hookrightarrow C_{d}\wr C_{d}\wr \dots \wr C_{d},
\) with \(n\) factors.
\end{proof}

\subsection{Capelli's Lemma}

The following irreducibility criterion will play a fundamental role in our analysis of the arboreal Galois groups associated with the families considered in Sections~4 and~5. It allows one to reduce the irreducibility of polynomial iterates to the irreducibility of certain polynomials over suitable field extensions; see \cite[Satz~4, p.~288]{Tschebotarow1950}.

\begin{lem}[Capelli's Lemma]
\label{Capelli's Lemma}
Let \(L\) be a field, let \(f(x)\in L[x]\) be irreducible, and let
\(\alpha\) be a root of \(f(x)\) in an algebraic closure of \(L\).
If \(g(x)\in L[x]\), then the composed polynomial \(f(g(x))\)
is irreducible over \(L\) if and only if \(g(x)-\alpha\)
is irreducible over \(L(\alpha)\).
\end{lem}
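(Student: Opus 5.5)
The statement just above is Capelli's Lemma. The plan is to prove it by a degree count in the tower $L\subset L(\alpha)\subset L(\beta)$, where $\beta$ is a root of $g(x)-\alpha$ in an algebraic closure of $L$. We may assume $\deg g\ge 1$; the constant case is degenerate and can be treated by convention. Set $n=\deg f$ and $m=\deg g$. Then $f(g(x))$ has degree $nm$, and $\beta$ is a root of it because $f(g(\beta))=f(\alpha)=0$.

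The key steps, in order, are as follows.

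First, since $g(\beta)=\alpha$, we have $L(\beta)\supseteq L(\alpha)$. The tower law then gives
\[
[L(\beta):L]=[L(\beta):L(\alpha)]\,[L(\alpha):L]=[L(\beta):L(\alpha)]\cdot n,
\]
using that $f$ is irreducible, so $[L(\alpha):L]=n$.

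Second, $\beta$ is a root of $g(x)-\alpha\in L(\alpha)[x]$, which has degree $m$. Hence $[L(\beta):L(\alpha)]\le m$, with equality for every choice of $\beta$ exactly when $g(x)-\alpha$ is irreducible over $L(\alpha)$. Likewise $[L(\beta):L]\le nm$, with equality for every root of $f\circ g$ exactly when $f\circ g$ is irreducible over $L$. This uses that a polynomial is irreducible if and only if each of its roots has degree equal to the degree of the polynomial, after normalizing the leading coefficient.

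Third, assemble the two directions.
\begin{itemize}
\item Suppose $f\circ g$ is irreducible. Take any root $\beta$ of $g(x)-\alpha$. Then $\beta$ is a root of $f\circ g$, so $[L(\beta):L]=nm$, and the tower law forces $[L(\beta):L(\alpha)]=m$. Thus every root of $g(x)-\alpha$ has degree $m$ over $L(\alpha)$, so $g(x)-\alpha$ is irreducible there.
\item Conversely, suppose $g(x)-\alpha$ is irreducible over $L(\alpha)$. Take any root $\gamma$ of $f\circ g$. Then $g(\gamma)$ is a root of $f$, say $\alpha'$. By irreducibility of $f$ there is an $L$-isomorphism $L(\alpha)\to L(\alpha')$ sending $\alpha\mapsto\alpha'$. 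It carries $g(x)-\alpha$ to $g(x)-\alpha'$, so the latter is irreducible over $L(\alpha')$. Repeating the computation above with $\alpha'$ in place of $\alpha$ gives $[L(\gamma):L]=nm$, hence $f\circ g$ is irreducible.
\end{itemize}

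The only mildly delicate point is this transfer from $\alpha$ to an arbitrary conjugate $\alpha'$ in the converse direction. Irreducibility of $f$ supplies the conjugating isomorphism, and the isomorphism preserves irreducibility of the transported polynomial. No separability hypothesis is needed, since only degrees of simple extensions enter the argument.
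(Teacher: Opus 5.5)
The paper does not prove this lemma; it only cites it from \cite[Satz~4, p.~288]{Tschebotarow1950}, so there is no argument in the paper to compare yours against.

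Your tower-law degree count is the standard proof, and it is correct. The key facts all hold: $L(\alpha)\subseteq L(\beta)$ because $\alpha=g(\beta)$, $\deg(f\circ g)=nm$, and the count in $L\subseteq L(\alpha)\subseteq L(\beta)$ gives both directions without any separability hypothesis.

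The constant case needs no convention. If $g$ is constant, then $f(g(x))$ and $g(x)-\alpha$ are both constants, hence never irreducible, so the equivalence holds trivially.

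You can also shorten the converse. A polynomial of degree $k\ge 1$ over a field is irreducible as soon as \emph{one} of its roots has degree $k$ over that field, because the root's minimal polynomial divides it and has the same degree. So if $g(x)-\alpha$ is irreducible over $L(\alpha)$, the single root $\beta$ already satisfies $[L(\beta):L]=nm=\deg(f\circ g)$, and this forces $f\circ g$ to be irreducible. Your transport to an arbitrary conjugate $\alpha'$ via the $L$-isomorphism $L(\alpha)\to L(\alpha')$ is correct but unnecessary.
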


\subsection{Gekeler's Defect Criterion}
The comparison between arboreal and Drinfeld Galois representations in Section~6 requires a criterion of Gekeler describing the image of the adelic Galois representation attached to a twisted Carlitz module. We briefly recall the necessary definitions and results.

\subsubsection{Twisted Carlitz Modules}

The examples considered in Section~6 arise from twisted Carlitz modules. For \(\Delta\in K^\times\), the twisted Carlitz module associated to \(\Delta\) is the rank-one Drinfeld module defined by \(\rho_t^{\Delta}(x)=tx+\Delta x^q.\) \\
We refer the reader to \cite{GEKELER2016316} for the basic theory of twisted Carlitz modules and the associated adelic Galois representations. Following Gekeler, the failure of surjectivity of the Adelic image is measured by a numerical invariant called the \emph{defect}.

\subsubsection{Gekeler's Theorem}

We recall Gekeler's description of the image of the Adelic Galois representation attached to a twisted Carlitz module.\\
Let $\Delta=c^{k_0}Q_1^{k_1}\cdots Q_s^{k_s}$, where $c\in\mathbb \mu_d$ is a fixed primitive \((q-1)^{st}\) root of unity, the polynomials \(Q_1,\ldots,Q_s\in \mathcal{R}\) are distinct monic irreducible polynomials, and $$0\le k_0<q-1, \quad
0<k_i<q-1 \quad (1\le i\le s).$$ Set $D:=\deg(\Delta) =\sum_{i=1}^{s}k_i\deg(Q_i),$ and define 
\[
k_0^*=
\begin{cases}
k_0,
& \text{if $q$ or $D$ is even},\\[1mm]
k,
& \text{if $q$ and $D$ are odd},
\end{cases}
\]
where \(k\) is the unique integer satisfying
\[
k\equiv k_0+\frac{q-1}{2}\pmod{q-1},
\qquad
0\le k<q-1.
\]
Let \(N\in \mathcal{R} \) be a nonconstant polynomial. Suppose that \(Q_i\mid N \quad (1\le i\le r),\) and \(\gcd(Q_i,N)=1 \quad (r<i\le s).\) The following theorem, due to Gekeler, gives an explicit formula for the defect of the Galois representation attached to the twisted Carlitz module $\rho_t^{\Delta}(x)$.

\begin{thm}[\cite{GEKELER2016316}, Theorem 3.13]
\label{Gekeler}
The image of mod-\(N\) Galois representation associated to \(\rho_t^{\Delta}(x)\) has index in $(\mathbb F_q[t]/(N))^\times$ equal to
\[\operatorname{def}(\Delta,N)=\gcd\bigl(D-1,\,q-1,\,k_0^*,\,
k_{r+1},\ldots,k_s\bigr).\]
\end{thm}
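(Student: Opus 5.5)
The plan is to reduce the twisted Carlitz module to the ordinary Carlitz module \(C_t(x)=tx+x^q\) by a twist, and then to compute the image as the image of a fibre product of two explicitly known Galois characters.

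\textbf{Step 1 (twisting).} Fix \(\delta\in\overline K\) with \(\delta^{q-1}=\Delta\). Substituting \(x=\delta^{-1}y\) gives \(\rho_t^{\Delta}(\delta^{-1}y)=\delta^{-1}C_t(y)\). Hence \(\rho^{\Delta}[N]=\delta^{-1}C[N]\) for every \(N\). Let \(\chi_N:G_K\to(\mathcal R/N)^\times\) be the Carlitz cyclotomic character, and let \(\kappa(\sigma)=\sigma(\delta)/\delta\in\mathbb F_q^\times\subset(\mathcal R/N)^\times\) be the Kummer character of \(\Delta\). Then \(\sigma\) acts on \(\rho^\Delta[N]\) by \(\chi_N(\sigma)\kappa(\sigma)^{-1}\). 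Consequently the mod-\(N\) image is the image of \(\mathrm{Gal}(K(C[N],\delta)/K)\subset(\mathcal R/N)^\times\times\mathbb F_q^\times\) under \((a,\zeta)\mapsto a\zeta^{-1}\).

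\textbf{Step 2 (the intersection).} Recall that \(\chi_N\) is an isomorphism \(\mathrm{Gal}(K(C[N])/K)\cong(\mathcal R/N)^\times\), and that \(K(C[N])\) has constant field \(\mathbb F_q\). The maximal exponent-\((q-1)\) Kummer subextension of \(K(C[N])\) is generated by the roots \(((-1)^{\deg Q}Q)^{1/(q-1)}\) for the primes \(Q\mid N\). This follows from the Carlitz analogue of the discriminant/Gauss sum identity: the product of the nonzero \(Q\)-torsion points equals \((-1)^{\deg Q}Q\) up to the appropriate normalization. On \(\mathrm{Gal}(K(C[N])/K)\) the Kummer character of \((-1)^{\deg Q}Q\) is \(a\mapsto\bigl(\text{image of }a\text{ in }(\mathcal R/Q)^\times\bigr)^{(q^{\deg Q}-1)/(q-1)}\), the \((q-1)\)-th power residue symbol at \(Q\). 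I would next write
\[
\Delta=\bigl(c^{k_0}(-1)^{D}\bigr)\prod_i\bigl((-1)^{\deg Q_i}Q_i\bigr)^{k_i}.
\]
Then \(\kappa\) splits as a product of three characters. The first is a character attached to the constant \(c^{k_0}(-1)^D\), which is a unit \emph{not} lying in \(K(C[N])\) since the constant field of \(K(C[N])\) is \(\mathbb F_q\). The second is a product of residue symbols for \(Q_i\mid N\), which factor through \(\chi_N\). The third is a product of characters for \(Q_i\nmid N\), which are independent of \(K(C[N])\) by ramification.

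\textbf{Step 3 (index count).} Given Step 2, the image of \(\sigma\mapsto(\chi_N(\sigma),\kappa(\sigma))\) is \(\{(a,\psi(a)\eta)\}\). Here \(a\) ranges over \((\mathcal R/N)^\times\), \(\psi(a)=\prod_{i\le r}\bigl(a\bmod Q_i\bigr)^{k_i(q^{\deg Q_i}-1)/(q-1)}\), and \(\eta\) ranges freely over the subgroup of \(\mu_{q-1}\) generated by the exponents \(k_0^*\) (after absorbing \((-1)^D\) into \(c^{k_0}\), which is exactly what the parity rule defining \(k_0^*\) encodes) and by \(k_{r+1},\dots,k_s\). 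The image in \((\mathcal R/N)^\times\) is therefore \(\{a\psi(a)^{-1}\eta^{-1}\}\). Its index is computed on the scalar subgroup \(\mathbb F_q^\times\subset(\mathcal R/N)^\times\). For scalars \(a\in\mathbb F_q^\times\) one has \(\psi(a)=a^{\sum_{i\le r}k_i\deg Q_i}\). Combined with the free \(\eta\)-part and the fact that \(a\mapsto a\psi(a)^{-1}\) is surjective modulo scalars, this makes the index equal to the index in \(\mathbb F_q^\times\) of the subgroup generated by the exponents \(1-\sum_{i\le r}k_i\deg Q_i\), \(k_0^*\), and \(k_{r+1},\dots,k_s\). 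Since \(\sum_{i>r}k_i\deg Q_i\) lies in the span of the \(k_i\) with \(i>r\), we may replace the first exponent by \(1-D\). This yields \(\gcd(D-1,q-1,k_0^*,k_{r+1},\dots,k_s)\).

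\textbf{Main obstacle.} I expect the hardest part to be Step 2. It requires showing that the intersection \(K(C[N])\cap K(\delta\text{-Kummer classes})\) is exactly as claimed, with the correct sign \((-1)^{\deg Q}\), and tracking how the constant \(c^{k_0}\) and the sign \((-1)^D\) combine; this is where the case split defining \(k_0^*\) arises. I would first verify the sign identity for \(C[Q]\) via the explicit formula \(\prod_{\lambda\ne0}\lambda=\pm Q\), obtained as the constant term of \(C_Q(x)/x\). I would then deduce the maximal Kummer subextension from the structure of the Galois group \(\mathrm{Gal}(K(C[N])/K)\cong(\mathcal R/N)^\times\) together with tame ramification only at the primes dividing \(N\) and at \(\infty\).
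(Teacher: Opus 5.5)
The paper gives no proof of this statement. It is quoted from Gekeler \cite{GEKELER2016316} and used as a black box in Section~6. Your proposal is therefore a self-contained reconstruction rather than an alternative to an argument in the paper, and it is correct.

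\begin{itemize}
\item The twist gives the action \(\chi_N\kappa^{-1}\) on \(\rho^\Delta[N]=\delta^{-1}C[N]\).
\item The maximal exponent-\((q-1)\) subextension of \(K(C[N])\) is \(K\bigl(\sqrt[q-1]{(-1)^{\deg Q}Q}:Q\mid N\bigr)\). This follows from a degree count: \((\mathcal R/N)^\times/((\mathcal R/N)^\times)^{q-1}\cong(\mathbb Z/(q-1))^r\), because the principal-unit parts are \(p\)-groups, and the \(r\) classes are independent by valuations.
\item The image \(I\) satisfies \(I\cdot\mathbb F_q^\times=(\mathcal R/N)^\times\), so its index is \([\mathbb F_q^\times:I\cap\mathbb F_q^\times]\). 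This yields the stated gcd.
\end{itemize}

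Compared with the bare citation, your route shows where each entry of the gcd comes from:
\begin{itemize}
\item \(D-1\) comes from scalars acting by \(\zeta^{1-\sum_{i\le r}k_i\deg Q_i}\);
\item \(k_0^*\) comes from the sign in the Carlitz analogue of \(\sqrt{p^*}\in\mathbb Q(\zeta_p)\);
\item \(k_{r+1},\dots,k_s\) come from Kummer classes linearly disjoint from \(K(C[N])\).
\end{itemize}
It also shows directly that the defect depends only on the primes dividing \(N\), which is exactly what Theorem~\ref{arboreal-max} uses.

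Three points should be tightened; none is a gap.

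(i) Literally, \(\prod_{\lambda\in C[Q]\setminus\{0\}}\lambda=Q\), the constant term of the monic polynomial \(C_Q(x)/x\) of even degree. The identity \((-1)^{\deg Q}Q=\bigl(\prod_j\lambda_j\bigr)^{q-1}\), with \(\lambda_j\) running over representatives of the \(\mathbb F_q^\times\)-orbits, comes from \(\prod_{\zeta\in\mathbb F_q^\times}\zeta=-1\) together with \((q^{\deg Q}-1)/(q-1)\equiv\deg Q\pmod{q-1}\). The same orbit computation gives \(\psi(\zeta)=\zeta^{\deg Q}\) on scalars, which is all Step~3 needs. The full residue-symbol formula (the transfer \((\mathcal R/Q)^\times\to\mathbb F_q^\times\)) is not required.

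(ii) You must prove that \(\eta\) ranges freely for the combined element \(w=c^{k_0}(-1)^D\prod_{i>r}\bigl((-1)^{\deg Q_i}Q_i\bigr)^{k_i}\), not factor by factor. Suppose \(\sqrt[q-1]{w^j}\in K(C[N])\). Unramifiedness at each \(Q_i\nmid N\) forces \((q-1)\mid jk_i\), so \(w^j\) is a constant times a \((q-1)\)-th power. Since the constant field of \(K(C[N])\) is \(\mathbb F_q\), this forces \(w^j\in(K^\times)^{q-1}\). Hence \(\eta\) ranges over \(\mu_m\) with \(m\) the order of \([w]\). Ramification at \(Q\mid N\) is wild when \(Q^2\mid N\), so it is not tame as you wrote, but only unramifiedness outside \(N\infty\) is used.

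(iii) The constant part of \(w\) is \(c^{k_0^*}(-1)^{\sum_{i>r}k_i\deg Q_i}\), not \(c^{k_0^*}\). The extra sign shifts the exponent by an integer combination of \(k_{r+1},\dots,k_s\). So \(\mu_m\) is still generated by \(c^{\gcd(k_0^*,k_{r+1},\dots,k_s,q-1)}\), as you claim.
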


\begin{cor}[\cite{GEKELER2016316}, Corollary 3.14]
\label{cor-gek}

As an immediate consequence, one obtains a corresponding description of the defect of the associated adelic Galois representation $\operatorname{def}\bigl(\rho^{(\Delta)}\bigr)
=
\gcd\bigl(
D-1,\,
q-1,\,
k_0^*
\bigr).$
\end{cor}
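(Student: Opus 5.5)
The plan is to obtain the adelic defect as a limit of the finite-level defects supplied by Theorem~\ref{Gekeler}. Let $\widehat{\mathcal R}^\times=\varprojlim_N (\mathcal R/(N))^\times$, with the inverse limit taken over nonconstant $N\in\mathcal R$ ordered by divisibility. Let $H\subset \widehat{\mathcal R}^\times$ be the image of the adelic Galois representation of $\rho_t^{\Delta}$, so that $\operatorname{def}(\rho^{(\Delta)})=[\widehat{\mathcal R}^\times:H]$. For each $N$, the mod-$N$ representation is the composite of the adelic one with the projection $\pi_N:\widehat{\mathcal R}^\times\to(\mathcal R/(N))^\times$. Hence $\pi_N(H)$ is the mod-$N$ image, and $[(\mathcal R/(N))^\times:\pi_N(H)]=\operatorname{def}(\Delta,N)$.

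First I will record a general profinite fact. Let $G=\varprojlim G_N$ be a profinite group with surjective transition maps, and let $H\subset G$ be a closed subgroup. Then
\[
[G:H]=\sup_N\,[G_N:\pi_N(H)].
\]
Indeed, if $N\mid N'$, then $\pi_N(H)$ is the image of $\pi_{N'}(H)$ under a surjection $G_{N'}\to G_N$, so the indices are nondecreasing along divisibility. Moreover, $H=\bigcap_N\pi_N^{-1}(\pi_N(H))$ because $H$ is closed. If the indices are bounded, they stabilize at some level $N_0$, and then $H=\pi_{N_0}^{-1}(\pi_{N_0}(H))$, which has exactly that index. In our situation the indices are bounded by $q-1$, since every $\operatorname{def}(\Delta,N)$ divides $q-1$.

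Second, I compute the supremum using Theorem~\ref{Gekeler}. For an arbitrary nonconstant $N$, the quantity $\operatorname{def}(\Delta,N)=\gcd(D-1,q-1,k_0^*,k_{r+1},\dots,k_s)$ is a gcd over a superset of $\{D-1,q-1,k_0^*\}$. It therefore divides $g:=\gcd(D-1,q-1,k_0^*)$, and in particular it is at most $g$. Note that $D$ and $k_0^*$ depend only on $\Delta$ and not on $N$. Now choose $N_0=Q_1\cdots Q_s$, or $N_0=t$ if $s=0$. Then every $Q_i$ divides $N_0$, so $r=s$, the list $k_{r+1},\dots,k_s$ is empty, and $\operatorname{def}(\Delta,N_0)=g$. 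Thus the supremum equals $g$, and it is attained at every multiple of $N_0$. This gives $\operatorname{def}(\rho^{(\Delta)})=\gcd(D-1,q-1,k_0^*)$.

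The only step needing care is the identification of the adelic index with the stabilized finite indices, namely the closedness and compactness argument above. The choice of $N_0$ coprime to nothing, so that all $k_i$ with $i>r$ drop out, is immediate. I expect no genuine obstacle beyond stating this limit argument cleanly.
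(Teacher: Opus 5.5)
Your proposal is correct and matches the paper's approach: the paper gives no proof beyond citing Gekeler and calling the corollary an immediate consequence of Theorem~\ref{Gekeler}, and you make this explicit. You identify the adelic index with the stabilized finite-level index, which is valid because the image is closed and all indices divide $q-1$. You then evaluate at $N=Q_1\cdots Q_s$, where every $k_i$ drops out of the gcd.

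One detail is worth writing out. For every multiple $N'$ of the stabilizing level $N_0$, one has $\pi_{N'}^{-1}(\pi_{N'}(H))\subseteq\pi_{N_0}^{-1}(\pi_{N_0}(H))$, and the two have the same finite index, so they are equal. Hence the intersection over this cofinal family of levels is exactly $\pi_{N_0}^{-1}(\pi_{N_0}(H))$.
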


\section{A Kummer Independence Criterion}

\begin{prop}
\label{prop:general-kummer-independence}
Let \(f(x)=Ax^d+B\in K[x],\quad A,B\in K^\times.\)
For \(m\ge1\), set \(a_m:=f^{\circ m}(0).\)
Fix \(n\ge2\). Assume that \(f^{\circ m}(x)\) is separable over \(K\) for \(1\le m\le n\), and that \(\mathcal{G}_{n-1}\) acts transitively on \(R_{n-1}\). Suppose there exists a finite place \(P\) of \(K\) such that
\[
v_P(a_n)=1,
\qquad
v_P(a_m)=0\quad(1\le m<n),
\qquad
v_P(A)=v_P(B)=0.
\]
Then the Kummer classes \(\left[\frac{\beta-B}{A}\right],
\ \beta\in R_{n-1},\) are linearly independent in
\(K_{n-1}^\times/(K_{n-1}^\times)^d,\) i.e. if
\(\prod_{\beta\in R_{n-1}} \left(\frac{\beta-B}{A}\right)^{e_\beta}\in(K_{n-1}^{\times})^d\) for integers \(e_\beta\), then $e_\beta\equiv0\pmod d$ for every \(\beta\in R_{n-1}\). Consequently, we have \(\operatorname{Gal}(K_n/K_{n-1})\cong\prod_{\beta\in R_{n-1}} C_d=(C_d)^{R_{n-1}}.\)
\end{prop}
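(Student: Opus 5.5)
The plan is the standard ``isolated prime'' argument in $K_{n-1}$. First, the product of the Kummer elements is essentially $a_n$. Indeed, $f^{\circ(n-1)}(x)=A'\prod_{\beta\in R_{n-1}}(x-\beta)$, where $A'$ is the leading coefficient, a product of powers of $A$. Hence
\[
\prod_{\beta\in R_{n-1}}\frac{\beta-B}{A}=A^{-d^{n-1}}\prod_{\beta}(\beta-B)=\pm A^{-d^{n-1}}A'^{-1}f^{\circ(n-1)}(B)=\pm A^{-d^{n-1}}A'^{-1}a_n,
\]
using $f(0)=B$, so $f^{\circ(n-1)}(B)=f^{\circ n}(0)=a_n$. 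Since $v_P(A)=0$, the $P$-adic valuation of this product is $v_P(a_n)=1$.

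Next, fix a prime $\mathfrak P$ of $K_{n-1}$ above $P$ and analyze the local picture. The hypotheses $v_P(a_m)=0$ for $m<n$ and $v_P(A)=v_P(B)=0$ should make $K_{n-1}/K$ unramified at $P$. The extension $K_{n-1}$ is generated by successive $d$-th roots of $(\beta'-B)/A$ for $\beta'\in R_{m-1}$ with $m\le n-1$. Each such element is a $P$-unit whose norm down the tower is, up to a unit, $a_m$, so it is a unit at every prime above $P$. Since $p\nmid d$, each step is tame and unramified at $P$. So $e(\mathfrak P/P)=1$, and $\sum_\beta v_{\mathfrak P}(\beta-B)=1$. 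The roots $\beta$ are integral at $\mathfrak P$: the polynomial $f^{\circ(n-1)}$ divided by $A'$ is monic with $P$-integral coefficients, because $v_P(A)=v_P(B)=0$. Hence all $v_{\mathfrak P}(\beta-B)\ge0$. Exactly one $\beta_0$ therefore has $v_{\mathfrak P}(\beta_0-B)=1$, and all the others have valuation $0$.

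Now suppose $\prod_\beta((\beta-B)/A)^{e_\beta}\in (K_{n-1}^\times)^d$. Taking $v_{\mathfrak P}$ gives $e_{\beta_0}\equiv 0\pmod d$. To reach the other $\beta$, use transitivity. For $\sigma\in\mathcal G_{n-1}$, the prime $\sigma^{-1}\mathfrak P$ isolates $\sigma^{-1}(\beta_0)$. Equivalently, apply $\sigma$ to the relation, which is again a $d$-th power relation, now with permuted exponents, and take $v_{\mathfrak P}$ again. This gives $e_{\sigma^{-1}\beta_0}\equiv0$. Since $\mathcal G_{n-1}$ is transitive on $R_{n-1}$, every $e_\beta\equiv 0\pmod d$.

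Finally, $K_n=K_{n-1}(\sqrt[d]{(\beta-B)/A}:\beta\in R_{n-1})$. Applying Theorem~\ref{Kummer-Theory} with $\mu_d\subset K$ and independent classes gives $\operatorname{Gal}(K_n/K_{n-1})\cong (C_d)^{R_{n-1}}$.

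The main obstacle is the unramifiedness of $K_{n-1}$ at $P$. The cleanest route is probably via the discriminant of $f^{\circ(n-1)}$. For $f=Ax^d+B$, this discriminant is, up to powers of $A$, $d$, and signs, a product of the critical-orbit values $a_1,\dots,a_{n-1}$ (the standard formula for unicritical polynomials). Its $P$-adic valuation is then $0$, so $P$ is unramified in the splitting field.
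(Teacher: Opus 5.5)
Your proposal is correct and follows essentially the paper's proof. You show $P$ is unramified in $K_{n-1}$, use $\prod_{\beta}(\beta-B)=\pm a_n/\operatorname{LC}(f^{\circ(n-1)})$ to get total $\mathfrak P$-valuation $1$ and isolate one root, and transport that root by transitivity. The differences are minor: you read off the isolated root directly from integrality of the roots instead of invoking Hensel's lemma, and you justify unramifiedness with a unit-radicand Kummer tower (or the discriminant formula) rather than the paper's chain-rule check that $f^{\circ(n-1)}$ has no repeated root mod $P$.
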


\begin{proof}

For each \(\beta\in R_{n-1}\), the equation
\(f(x)=\beta\) is equivalent to \( x^d=\frac{\beta-B}{A}.\) Then \(K_n/K_{n-1}\) is obtained by adjoining the \(d^{th}\) roots of the elements \(\frac{\beta-B}{A},\ \beta\in R_{n-1}.\) It remains to prove that their classes are independent in \(K_{n-1}^\times/(K_{n-1}^\times)^d\). Since \(v_P(a_n)=1\) then \(B\) is a root of \(f^{\circ(n-1)}(x)\) modulo \(P\). 

We show that $B$ root is simple. By the chain rule, \[\bigl(f^{\circ(n-1)}\bigr)'(x)=(Ad)^{n-1}
\prod_{i=0}^{n-2}f^{\circ i}(x)^{d-1}.\] Evaluating at \(x=B\), and using \(f^{\circ i}(B)=a_{i+1}\), we obtain \(v_P\left(\bigl(f^{\circ(n-1)}\bigr)'(B)\right)=0.\) Thus \(B\) is a simple root of \(f^{\circ(n-1)}(x)\) modulo \(P\).

Next we show that \(P\) is unramified in \(K_{n-1}/K\). Suppose that \(f^{\circ(n-1)}\) had a multiple root modulo \(P\). Then there would exist \(\gamma\) such that \( f^{\circ(n-1)}(\gamma)\equiv 0 \pmod P\) and \( \bigl(f^{\circ(n-1)}\bigr)'(\gamma)\equiv 0 \pmod P.\) Since \(v_P(A)=0\) and \(d\in \mathbb F_q^\times\), the derivative formula implies that \(f^{\circ i}(\gamma)\equiv 0 \pmod P\) for some \(0\le i\le n-2\). Applying composition with \(f^{\circ(n-1-i)}\) on both sides, we get \(a_{n-1-i}=f^{\circ(n-1-i)}(0)\equiv f^{\circ(n-1)}(\gamma)\equiv 0 \pmod P,\)
contradicting \(v_P(a_m)=0\ (1\le m<n).\) Hence \(P\nmid \operatorname{disc}(f^{\circ(n-1)})\), and therefore \(P\) is unramified in \(K_{n-1}\).

Since \(B\) is a simple root of \(f^{\circ(n-1)}(x)\) modulo \(P\), Hensel's lemma gives a unique root \(\widetilde{\beta}\in K_P\) of
\(f^{\circ(n-1)}(x)\) satisfying \(\widetilde{\beta}\equiv B \pmod P.
\) Choose an embedding \(K_{n-1}\hookrightarrow \overline{K_P}\) sending some root \(\beta^\ast\in R_{n-1}\) to \(\widetilde{\beta}\), and let \(\mathfrak p\) be the corresponding prime of \(K_{n-1}\) above \(P\). Then \(v_{\mathfrak p}(\beta^\ast-B)>0.\) No other root \(\beta\neq\beta^\ast\) can satisfy
\(v_{\mathfrak p}(\beta-B)>0\) because of the uniqueness of Hensel lifting. Hence \(v_{\mathfrak p}(\beta-B)\le 0 \ (\beta\neq\beta^\ast).
\) Using
\[
f^{\circ(n-1)}(B)
=
\operatorname{LC}(f^{\circ(n-1)})
\prod_{\beta\in R_{n-1}}(B-\beta),
\]
together with \(v_P(a_n)=1\), the fact that \(P\) is unramified in
\(K_{n-1}/K\), and the fact that the leading coefficient is a \(P\)-adic unit,
we obtain
\[
1
=
v_{\mathfrak p}\bigl(f^{\circ(n-1)}(B)\bigr)
=
\sum_{\beta\in R_{n-1}}v_{\mathfrak p}(B-\beta).
\]
Therefore we get
\[
v_{\mathfrak p}(\beta^\ast-B)=1,
\quad \textrm{ and }\quad
v_{\mathfrak p}(\beta-B)=0
\quad \textrm{ for }\beta\neq\beta^\ast.
\]
Since \(v_P(A)=0\), this gives \(v_{\mathfrak p}\left(\frac{\beta^\ast-B}{A}\right)=1,\) and \(v_{\mathfrak p}\left(\frac{\beta-B}{A}\right)=0 \ \textrm{ for $\beta\ne\beta^\ast.$}\) Now suppose that there is a Kummer relation \( \prod_{\beta\in R_{n-1}} \left(\frac{\beta-B}{A}\right)^{e_\beta}=w^d\) for some \(w\in K_{n-1}^\times\) and integers \(e_\beta\). Evaluating via \(v_{\mathfrak p}\), we obtain
\( e_{\beta^\ast}\equiv 0\pmod d.\) Let \(\beta_0\in R_{n-1}\) be arbitrary. Since \(\mathcal{G}_{n-1}\) acts transitively on
\(R_{n-1}\), choose \(\sigma\in \mathcal G_{n-1}\) such that \(
\sigma(\beta^\ast)=\beta_0.\) Evaluating by the conjugate valuation \(\sigma(\mathfrak p)\) gives \(e_{\beta_0}\equiv 0\pmod d.\) Since \(\beta_0\) was arbitrary, all \(e_\beta\) are divisible by \(d\). Therefore the classes \(
\left[
\frac{\beta-B}{A}
\right],
\
\beta\in R_{n-1},
\)
are linearly independent in \(K_{n-1}^\times/(K_{n-1}^\times)^d\). Hence by the multi-radical form of Kummer theory, this linear independence implies
\[
\operatorname{Gal}(K_n/K_{n-1})
\cong
\prod_{\beta\in R_{n-1}} C_d.
\]
\end{proof}

\section{Arboreal Maximality for $f_M(x)=t^M x^{q-1}+t$} \label{Sec.AMf_m}

In this section, under the assumption that $q\geq 5$ is an odd prime power and $d=q-1$, we establish a primitive simple divisor theorem for the family
\[f_M(x)=t^M x^{d}+t,\]
where \(M\ge 0\). Note that the additional assumption on $q$ is used in the proof of Lemma~\ref{lem:primitive-radical-estimate-tM}. Then we give a sufficient condition for $M$ to satisfy Kummer independence criterion and maximality of first-level Galois group, the two properties ensure maximality of arboreal Galois image.

\subsection{Primitive Simple Divisors}\label{psd1}

 The existence of primitive simple divisors in the critical orbit of \(f_M\) will provide the key input needed for the Kummer independence criterion of Section $3$. For \(n\ge 1\), define
\(a_n=f_M^{\circ(n)}(0).\)

\begin{thm}
\label{thm:PSD}
Assume that \(M+d-1 \not\equiv 0 \pmod p.\) Then for every \(n\ge 1\) there exists an irreducible polynomial \(P_n\in \mathcal{R} \) such that \[v_{P_n}(a_n)=1, \qquad P_n\nmid a_1a_2\cdots a_{n-1}.
\] \\ In particular, every element \(a_n\) of the critical orbit has a primitive simple divisor.
\end{thm}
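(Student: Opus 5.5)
The plan is to strip off the common factor $t$ from the critical orbit, use rigid divisibility to control the non-primitive part of each term, and then bound the primitive squarefull part by the Mason--Stothers theorem (Theorem~\ref{thm:mason-stothers}).

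\emph{Normalizing the orbit.} Put $a_n=t\,b_n$. From $a_1=t$ and $a_{n+1}=t^M a_n^d+t$ we get
\[
b_1=1,\qquad b_{n+1}=t^{M+d-1}b_n^{d}+1 ,
\]
with $b_n\in\mathcal R$. For $n=1$ take $P_1=t$. For $n\ge 2$ we have $b_n\equiv 1\pmod t$, and $t\mid a_m$ for every $m$. So $t$ cannot be the required prime, and it suffices to find an irreducible $P$ with $v_P(b_n)=1$ and $P\nmid b_1\cdots b_{n-1}$. We also record
\[
\deg b_{n+1}=(M+d-1)+d\deg b_n,
\]
so $\deg b_n$ grows geometrically with ratio $d$.

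\emph{Rigid divisibility.} Suppose $P\mid b_m$ with $e=v_P(b_m)\ge 1$. Then $b_{m+1}\equiv 1=b_1\pmod{P^{de}}$. Because the recursion is polynomial, induction gives $b_{m+k}\equiv b_k\pmod{P^{de}}$ for all $k\ge 1$. From this I would derive two facts.
\begin{itemize}
\item If $P$ divides both $b_n$ and $b_m$ with $m<n$, then $P\mid b_{n-m}$. Running the Euclidean algorithm, $P\mid b_{\gcd(m,n)}$, so every non-primitive prime of $b_n$ divides some $b_g$ with $g\mid n$, $g<n$.
\item For such a prime, $v_P(b_n)=v_P(b_g)$ with $g$ the minimal index. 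This holds because $b_n\equiv b_{n-g}\pmod{P^{d v_P(b_g)}}$ and $d\,v_P(b_g)>v_P(b_g)$, and we iterate down to $b_g$.
\end{itemize}
Write $b_n=U\cdot V$, with $V$ the non-primitive part. Then
\[
\deg V\le \sum_{g\mid n,\ g<n}\deg b_g\le \sum_{j\le n/2}\deg b_j ,
\]
which is tiny compared with $\deg b_{n-1}$.

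\emph{Mason--Stothers.} Apply Theorem~\ref{thm:mason-stothers} to
\[
A=t^{M+d-1}b_{n-1}^{d},\qquad B=1,\qquad C=-b_n .
\]
These satisfy $A+B+C=0$ and are pairwise coprime, since $\gcd(t,b_j)=1$. The exceptional case would force $A\in\mathbb F_q[t^p]$, hence $p\mid v_t(A)=M+d-1$, which is excluded by hypothesis; this is the only place that hypothesis enters. We obtain
\[
\deg b_n\le \deg b_{n-1}+\deg\operatorname{rad}(b_n),
\quad\text{i.e.}\quad
\deg\operatorname{rad}(b_n)\ge (M+d-1)+(d-1)\deg b_{n-1}.
\]
Now suppose no primitive simple divisor exists. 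Then every prime of $U$ has exponent at least $2$, so
\[
\deg\operatorname{rad}(b_n)\le \tfrac12\deg U+\deg V\le \tfrac12\deg b_n+\tfrac12\deg V .
\]
Substituting $\deg b_n=(M+d-1)+d\deg b_{n-1}$ gives an upper bound of roughly $\tfrac d2\deg b_{n-1}+\tfrac{M+d-1}{2}+\tfrac12\deg V$. Since $\deg V$ is bounded by degrees at indices at most $n/2$, it is $O(\deg b_{n-1}/d^{\,n/2})$. Compared with the lower bound $(d-1)\deg b_{n-1}$, we have $d-1>d/2$ exactly when $d\ge 3$, and $q\ge5$ gives $d\ge4$. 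This yields a contradiction for all $n$ beyond a small threshold.

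\emph{Main obstacle.} The delicate part is the bookkeeping of constants: the $(M+d-1)/2$ term and the $\deg V$ term for small $n$. I would handle this by using $\deg b_{n-1}\ge M+d-1$ for $n\ge3$, and by checking $n=2$ directly: there $b_2=t^{M+d-1}+1$ and $V=1$, and Mason--Stothers applies verbatim. A few small $(n,d)$ cases may need a separate direct verification.
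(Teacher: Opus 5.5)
Your proposal is correct and follows essentially the paper's route: the same normalization $a_n=t\,b_n$, Mason--Stothers applied to $b_n-t^{M+d-1}b_{n-1}^{d}=1$ (with $p\nmid M+d-1$ excluding the exceptional case), and control of the non-primitive part through the terms $b_g$ with $g\mid n$, $g<n$. Your rigid-divisibility refinement is stronger than needed, since the paper only bounds the radical of the non-primitive part. No separate small-case checks are required: the inequality you derive, $M+d-1+(d-2)\deg b_{n-1}\le \sum_{g\mid n,\,g<n}\deg b_g$, is already contradicted for every $n\ge 2$ once $d\ge 3$.
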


The proof of Theorem \ref{thm:PSD} proceeds in three steps. We first determine the valuation of the critical orbit at the place \(t\) and obtain a normalized recursion. We then establish a lower bound for the primitive radical of the normalized orbit sequence using Theorem $ \ref{thm:mason-stothers}.$  Finally, a degree-counting argument yields the existence of a primitive simple divisor.

\begin{lem}
\label{lem:valuation-recursion-tM}
For every \(n\ge 1\), we have 
\(v_t(a_n)=1\). 
\end{lem}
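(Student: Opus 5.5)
We argue by induction on \(n\), using only the recursion \(a_1=f_M(0)=t\) and \(a_{n}=f_M(a_{n-1})=t^M a_{n-1}^{d}+t\) for \(n\ge 2\), together with the ultrametric property of \(v_t\).

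\emph{Base case.} We have \(a_1=t\), so \(v_t(a_1)=1\).

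\emph{Inductive step.} Suppose \(v_t(a_{n-1})=1\) for some \(n\ge 2\). Then
\[
v_t\bigl(t^M a_{n-1}^d\bigr)=M+d\,v_t(a_{n-1})=M+d\ge d=q-1\ge 4>1=v_t(t).
\]
The two summands of \(a_n=t^Ma_{n-1}^d+t\) therefore have distinct valuations. The strict ultrametric inequality gives
\[
v_t(a_n)=\min\{M+d,\,1\}=1.
\]
Since \(M\ge 0\) and \(d\ge 2\), this holds without further hypotheses; in particular the condition \(M+d-1\not\equiv 0\pmod p\) plays no role here.

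\emph{Normalized sequence.} For use in the subsequent steps of the proof of Theorem~\ref{thm:PSD}, we record the normalization. Write \(a_n=t\,b_n\) with \(b_n\in\mathcal R\) and \(v_t(b_n)=0\). Dividing the recursion by \(t\) gives
\[
b_1=1,\qquad b_n=1+t^{M+d-1}b_{n-1}^{d}\quad (n\ge 2).
\]
Every \(b_n\) is therefore a polynomial in \(\mathcal R\) with \(b_n\equiv 1\pmod t\). The degrees satisfy
\[
\deg b_n=d\deg b_{n-1}+M+d-1\quad (n\ge 2),
\]
because the top-degree term of \(t^{M+d-1}b_{n-1}^d\) cannot be cancelled by the constant \(1\) once its degree is positive, which holds since \(M+d-1\ge d-1\ge 1\).

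The lemma has no real obstacle; it is just a check that the constant term \(t\) dominates \(t\)-adically at each step. The only point worth making explicit is that the normalized recursion \(b_n=1+t^{M+d-1}b_{n-1}^d\) has the form \(A+B+C=0\) with \(A=1\), \(B=t^{M+d-1}b_{n-1}^d\), \(C=-b_n\). This is the shape needed later for the Mason--Stothers estimate.
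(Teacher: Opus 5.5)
Your proof is correct and is essentially the paper's argument. The paper also inducts on $n$, writing $a_n=tb_n$ and noting $a_{n+1}=t\bigl(t^{M+d-1}b_n^{d}+1\bigr)$ with the bracket $\equiv 1 \pmod t$; this is the same observation as your strict ultrametric comparison $v_t(t^Ma_{n-1}^d)=M+d>1=v_t(t)$, and your normalization $b_n=t^{M+d-1}b_{n-1}^d+1$ with its degree recursion matches what the paper uses next in Lemma~\ref{lem:primitive-radical-estimate-tM}.
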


\begin{proof}
We proceed by induction on \(n\). For \(n=1\), we have
\(a_1=f_M(0)=t,\) and therefore \(v_t(a_1)=1.\) Assume that \(v_t(a_n)=1\) for some \(n\ge1\). Then we may write \(a_n=tb_n,\) where \(b_n\in \mathcal{R}\) and \(t\nmid b_n\), we obtain
\(a_{n+1}
=f_M(a_n)
=t^M a_n^{\,d}+t.
\)
Substituting \(a_n=tb_n\) gives
\(
a_{n+1}
=t^M(tb_n)^{d}+t
=t\Bigl(t^{M+d-1}b_n^{\,d}+1\Bigr).
\)
Since \(M+d-1\ge0\), the polynomial
\(
t^{M+d-1}b_n^{\,d}+1
\)
is congruent to \(1\) modulo \(t\). Hence it is not divisible by \(t\). 
\end{proof}

Defining \(b_1=1\),
\(a_n=tb_n, \) and set $E:=M+d-1$ the above computation shows that
\[
b_n=t^{E}b_{n-1}^{\,d}+1
\qquad (n\ge2).
\]

\begin{lem}
\label{lem:primitive-radical-estimate-tM}
Let \(N_n\) denote the degree of the primitive radical of \(b_n\) and \(B_n:=\deg\operatorname{rad}(b_n).\) Assume \(M+d-1\not\equiv 0 \pmod p\), then
\(
N_n>\frac{D_n}{2}
\)
for every \(n\ge 2\).
\end{lem}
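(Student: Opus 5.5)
Throughout, I read $D_n$ as $\deg b_n$; the quantity $D_n$ in the statement is not otherwise defined, so this is my interpretation. The plan is to apply Theorem~\ref{thm:mason-stothers} to the recursion $b_n=t^{E}b_{n-1}^{d}+1$, which gives a lower bound for $B_n=\deg\operatorname{rad}(b_n)$. I then subtract the contribution of non-primitive primes using a rigid-divisibility property of the orbit.

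\textbf{Degrees.} From the recursion, $D_1=0$ and $D_n=E+dD_{n-1}$ for $n\ge 2$. In particular $D_n\ge dD_{n-1}$ and $D_n\ge E=M+d-1>0$. Iterating,
\[
\sum_{m\le k}D_m\le D_k\Bigl(1+\tfrac1d+\tfrac1{d^2}+\cdots\Bigr)=\tfrac{d}{d-1}D_k .
\]

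\textbf{Mason--Stothers step.} Put $A=t^{E}b_{n-1}^{d}$, $B=1$ and $C=-b_n$, so that $A+B+C=0$. These are pairwise coprime: $b_n\equiv1$ modulo $t$ and modulo $b_{n-1}$. To exclude the exceptional case $A,B,C\in\mathbb F_q[t^p]$, I argue via the valuation at $t$ rather than degrees, since $D_n$ can be $\equiv0\pmod p$. Since $t\nmid b_{n-1}$, we have $v_t(A)=E\not\equiv0\pmod p$ by hypothesis. A nonzero element of $\mathbb F_q[t^p]$ has $t$-adic valuation divisible by $p$, so $A\notin\mathbb F_q[t^p]$. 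Theorem~\ref{thm:mason-stothers} then gives
\[
D_n\le \deg\operatorname{rad}(t\,b_{n-1}b_n)-1\le B_{n-1}+B_n,
\qquad\text{so}\qquad B_n\ge D_n-D_{n-1}.
\]

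\textbf{Rigid divisibility.} I claim that if an irreducible $P$ divides both $b_m$ and $b_n$ with $m<n$, then $P\mid b_{\gcd(m,n)}$. Note $P\ne t$, since $t\nmid b_k$. From $P\mid b_m$ we get $a_m\equiv0\pmod P$, so the orbit of $0$ is periodic mod $P$ and $a_{k+m}\equiv a_k$. Hence $P\mid b_n$ implies $P\mid b_{n-m}$, and the Euclidean algorithm gives $P\mid b_{\gcd(m,n)}$. Therefore every non-primitive prime of $b_n$ divides some $b_m$ with $m\mid n$ and $m\le n/2$. Such primes contribute degree at most
\[
\sum_{m\le n/2}D_m\le\sum_{m\le n-2}D_m\le \tfrac{d}{d-1}D_{n-2}\le\tfrac{1}{d-1}D_{n-1}.
\]
When $n=2$ this sum is empty, because $b_1=1$.

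\textbf{Combining.} Putting these together,
\[
N_n\ge B_n-\tfrac{1}{d-1}D_{n-1}\ge D_n-D_{n-1}\Bigl(1+\tfrac1{d-1}\Bigr)=D_n-\tfrac{d}{d-1}D_{n-1}\ge D_n\Bigl(1-\tfrac1{d-1}\Bigr).
\]
Since $q\ge5$ we have $d\ge4$, so $1-\frac1{d-1}>\frac12$. Together with $D_n>0$ this gives $N_n>D_n/2$.

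\textbf{Main obstacle.} The delicate points are excluding the exceptional Mason--Stothers case and the rigid-divisibility step. The rigid divisibility is essential: bounding the non-primitive part crudely by $\sum_{m<n}D_m$ fails at $d=4$. That refinement is exactly why the non-primitive contribution drops to roughly $D_{n-1}/(d-1)$.
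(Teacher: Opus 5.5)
Your proof is correct and follows the paper's argument. Both apply Mason--Stothers to $b_n=t^{E}b_{n-1}^{d}+1$, excluding the exceptional case via $v_t(t^{E}b_{n-1}^{d})=E\not\equiv 0\pmod p$, to get $B_n\ge D_n-D_{n-1}$, and both use the same return-time (equivalently, gcd) argument to confine non-primitive primes to $b_r$ with $r\mid n$, $r\le n/2$. The only difference is the final bookkeeping: you use $D_n\ge dD_{n-1}$ to bound the non-primitive contribution by $D_{n-1}/(d-1)$ and reach $N_n\ge\bigl(1-\frac{1}{d-1}\bigr)D_n$, where the paper estimates $\sum_{r\le\lfloor n/2\rfloor}D_r$ from the closed form of $D_r$; both rely on $d\ge 4$.
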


\begin{proof}
Set \(D_n=\deg b_n.\) Then \(D_1=0\) and, for \(n\ge2\), \( D_n=E+dD_{n-1}.\) Hence \[
D_n=E(1+d+\cdots+d^{n-2})
=E\frac{d^{n-1}-1}{d-1}.
\]
Since \(b_n-t^{E}b_{n-1}^{d}=1,\) the polynomials
\(b_n,\ -t^{E}b_{n-1}^{d},\ -1\) are pairwise coprime. Moreover, \(p\nmid E\) by hypothesis, so \(t^{E}b_{n-1}^{d}\) is not a \(p^{th}\) power in \(R\). Therefore the exceptional case of Theorem~\ref{thm:mason-stothers} does not occur, and Theorem~\ref{thm:mason-stothers} yields \(D_n\le\deg\operatorname{rad}(t^E b_{n-1}^d b_n)-1.\) Thus \(D_n\le1+B_{n-1}+B_n-1=B_{n-1}+B_n.\) But \(B_{n-1}\le D_{n-1}\), we obtain \(B_n\ge D_n-D_{n-1}.\) Using the formula for \(D_n\), we get \(D_n-D_{n-1}=Ed^{n-2}.\) Hence \(B_n\ge Ed^{n-2}.\) Suppose that an irreducible polynomial \(P\ne t\) divides both \(b_n\) and some earlier \(b_m\), with \(1\le m<n\). Then \(P\mid a_n\) and \(P\mid a_m\). Hence, modulo \(P\), the orbit of \(0\) under \(f_M\) returns to \(0\) at both times \(m\) and \(n\). Let \(r\) be the first positive return time. Then all return times are multiples of \(r\), so \( r\mid m\ \text{and}\ r\mid n.\) In particular, \(r<n\), \(r\mid n\), and \(P\mid a_r\). Since \(P\ne t\), this implies \(P\mid b_r\). Therefore every nonprimitive irreducible divisor of \(b_n\) divides \(\prod_{\substack{r\mid n\\ r<n}} b_r.\) Consequently, \(N_n \ge B_n-\sum_{\substack{r\mid n\\ r<n}}D_r.\)
Using \(B_n\ge Ed^{n-2}\), we obtain
\(N_n\ge Ed^{n-2}-\sum_{\substack{r\mid n\\ r<n}}D_r.
\) For \(n=2\) and \(n=3\), the only proper divisor contribution is \(D_1=0\). Hence \(N_n\ge Ed^{n-2}\) and the desired inequality holds for \(n=2,3\).\\ Now assume \(n\ge4\). Every proper divisor \(r<n\) of \(n\) satisfies
\(r\le \frac n2.\)
We get
\[
\sum_{\substack{r\mid n\\ r<n}}D_r
\le
\sum_{r=1}^{\lfloor n/2\rfloor}D_r=\frac{E}{d-1}\sum_{r=1}^{\lfloor n/2\rfloor}d^{r-1}-1<
\frac{E}{d-1}\sum_{r=1}^{\lfloor n/2\rfloor}d^{r-1}
<Ed^{\lfloor n/2\rfloor}\frac{1}{(d-1)^2}.
\]
Since \(n\ge4\), we apply \(\lfloor n/2\rfloor\le n-2\) to the above inequality to get
\[
\sum_{r=1}^{\lfloor n/2\rfloor}D_r
<Ed^{\lfloor n/2\rfloor}\frac{1}{(d-1)^2}<
Ed^{n-2}
-
\frac{Ed^{n-1}}{2(d-1)}.
\]
Note that the second inequality above used the assumption $d=q-1\geq 4$. 

It follows that
\[
Ed^{n-2}
-
\sum_{r=1}^{\lfloor n/2\rfloor}D_r
>
\frac{Ed^{n-1}}{2(d-1)}
>
\frac{E(d^{n-1}-1)}{2(d-1)}
=
\frac{D_n}{2}.
\]
Thus we get
\[
N_n
\ge
Ed^{n-2}
-
\sum_{\substack{r\mid n\\ r<n}}D_r
>
\frac{D_n}{2}.
\]
\end{proof}

\begin{proof}[Proof of Theorem\ref{thm:PSD}]
For \(n=1\), we have \(a_1=t,\) so \(a_1\) possesses a primitive simple divisor. Now let \(n\ge2\). By Lemma~\ref{lem:primitive-radical-estimate-tM}, \(N_n>\frac{D_n}{2}.\) Suppose, for contradiction, that every primitive irreducible divisor of \(b_n\) occurs with multiplicity at least \(2\). Then the primitive part of \(b_n\) has degree at least \(2N_n>D_n=\deg(b_n),\) which is impossible. Hence there exists a primitive irreducible divisor \(P_n\mid b_n\) such that \(v_{P_n}(b_n)=1.\) By Lemma~\ref{lem:valuation-recursion-tM}, \(a_n=tb_n.\) Since \(P_n\neq t\), we obtain \(v_{P_n}(a_n)=1.\) Moreover, the primitivity of \(P_n\) for \(b_n\) implies that \(P_n\nmid a_1a_2\cdots a_{n-1}.\) Therefore \(P_n\) is a primitive simple divisor of \(a_n\).
\end{proof}

\begin{cor}
\label{cor:monicPSD}
Let \(f_0(x)=x^{q-1}+t.\) Then every element of \(a_n=f_0^{\circ n}(0)\) has a primitive simple divisor.
\end{cor}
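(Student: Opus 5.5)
This corollary is the case \(M=0\) of Theorem~\ref{thm:PSD}, so the plan is simply to check that hypothesis and specialize. With \(M=0\) we have \(f_0=f_M\), and the critical orbit \(a_n=f_0^{\circ n}(0)\) is the sequence treated in Lemma~\ref{lem:valuation-recursion-tM} and Lemma~\ref{lem:primitive-radical-estimate-tM}. The exponent becomes \(E=M+d-1=d-1=q-2\).

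The only thing to verify is \(E\not\equiv 0\pmod p\). Since \(q=p^e\equiv 0\pmod p\), we get \(q-2\equiv -2\pmod p\). Because \(q\) is odd, \(p\ne 2\), so \(-2\not\equiv 0 \pmod p\). Hence \(M+d-1\not\equiv 0\pmod p\) holds.

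The standing assumption \(q\ge 5\) of this section, which is used in the estimate \(d\ge 4\) inside Lemma~\ref{lem:primitive-radical-estimate-tM}, is also in force. Theorem~\ref{thm:PSD} therefore applies. For each \(n\ge1\) it gives an irreducible \(P_n\) with \(v_{P_n}(a_n)=1\) and \(P_n\nmid a_1\cdots a_{n-1}\); that is, a primitive simple divisor of \(a_n\).

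There is no real obstacle here. The one point needing care is the parity observation that excludes \(p=2\): it is exactly what ensures that \(t^{q-2}b_{n-1}^{d}\) is not a \(p\)-th power, so that the Mason--Stothers step in Lemma~\ref{lem:primitive-radical-estimate-tM} avoids the exceptional case.
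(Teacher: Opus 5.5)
Your proposal is correct and is exactly the paper's argument: specialize Theorem~\ref{thm:PSD} to $M=0$ and verify that $M+d-1=q-2\not\equiv 0\pmod p$. You also spell out two points the paper leaves implicit: $q-2\equiv -2\pmod p$ is nonzero because $p$ is odd, and the section's standing hypothesis $q\ge 5$ is needed for Lemma~\ref{lem:primitive-radical-estimate-tM}.
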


\begin{proof}
This is the case \(M=0\) of Theorem~\ref{thm:PSD}. Since
\(0+(q-1)-1=q-2\not\equiv0\pmod p,\) the hypothesis of Theorem~\ref{thm:PSD} is satisfied.
\end{proof}

\begin{rem}
Note that \(M=1\), namely $f_1(x)=tx^{q-1}+t$, is the first genuinely nonconstant example covered by Theorem~\ref{thm:PSD}. It shows that the primitive simple divisor phenomenon is not restricted to monic polynomials.
\end{rem}

\subsection{Kummer Independence and Full Cyclic Wreath Products}

 \begin{lem}
 \label{Lemma:f_M_separable}
For every \(n\geq 1\), the polynomial \(f_M^{\circ n}(x)\) is separable over \(K\).
\end{lem}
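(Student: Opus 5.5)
Separability in characteristic $p$ can be tested with derivatives, so the plan is to compute $(f_M^{\circ n})'$ by the chain rule and show it shares no root with $f_M^{\circ n}$, with a separate argument when $M\ge 1$ and $t$ divides the leading coefficients. The case $M=0$ is straightforward; the work is in checking that the nonzero derivative coefficient $A=t^M$ causes no trouble.

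As in the proof of Proposition~\ref{prop:general-kummer-independence}, write $A=t^M$, so that
\[
\bigl(f_M^{\circ n}\bigr)'(x)=(Ad)^{n}\prod_{i=0}^{n-1}f_M^{\circ i}(x)^{d-1}.
\]
Here $d=q-1\equiv -1 \pmod p$ is a unit in $\mathbb F_q$, and $A\neq 0$, so the constant $(Ad)^n$ is a nonzero element of $K$. Now suppose $\gamma\in\overline K$ is a multiple root of $f_M^{\circ n}$. Then $f_M^{\circ n}(\gamma)=0$ and the derivative vanishes at $\gamma$. Hence $f_M^{\circ i}(\gamma)=0$ for some $0\le i\le n-1$. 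Applying $f_M^{\circ(n-i)}$ gives
\[
0=f_M^{\circ n}(\gamma)=f_M^{\circ(n-i)}(0)=a_{n-i},
\]
with $1\le n-i\le n$. So it suffices to show that $a_m\neq 0$ in $K$ for every $m\ge 1$.

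That nonvanishing is immediate from Lemma~\ref{lem:valuation-recursion-tM}, which gives $v_t(a_m)=1$, so $a_m\neq 0$. Alternatively, the recursion $b_m=t^{E}b_{m-1}^{d}+1$ with $b_1=1$ gives $b_m\equiv 1 \pmod t$, so $a_m=tb_m\neq0$; note this holds for every $M\ge0$, since $E\ge d-1>0$. Thus $f_M^{\circ n}$ and its derivative have no common root, and $f_M^{\circ n}$ is separable for all $n\ge1$.

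There is no serious obstacle. The only points to handle carefully are that $p\nmid d$, so the derivative is not identically zero, and that the critical orbit avoids $0$, which is exactly Lemma~\ref{lem:valuation-recursion-tM}. Notably, the hypothesis $M+d-1\not\equiv 0\pmod p$ is not needed for this lemma.
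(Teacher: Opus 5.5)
Your proposal is correct and follows the paper's own proof: the chain-rule formula for \((f_M^{\circ n})'\) reduces any multiple root to a vanishing \(a_{n-i}=f_M^{\circ(n-i)}(0)\), and Lemma~\ref{lem:valuation-recursion-tM} (\(v_t(a_m)=1\)) rules this out. You are also right that the hypothesis \(M+d-1\not\equiv 0 \pmod p\) plays no role, and that no separate argument for \(M\ge 1\) is needed, since \(t^M\) is simply a nonzero constant in \(K\).
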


\begin{proof}
Since \(f_M'(x)=t^M d x^{d-1}\) and \(p\nmid d\), the only critical point of \(f_M\) is \(0\).
Suppose that \(f_M^{\circ n}(x)\) has a multiple root \(\alpha\). Then
\(f_M^{\circ n}(\alpha)=0\) and \((f_M^{\circ n})'(\alpha)=0\). By the chain rule,
\[
(f_M^{\circ n})'(x)
=
\prod_{i=0}^{n-1} f_M'\bigl(f_M^{\circ i}(x)\bigr).
\]
Hence \(f_M^{\circ i}(\alpha)=0\) for some \(0\leq i\leq n-1\). Applying
\(f_M^{\circ(n-i)}\) gives \(f_M^{\circ(n-i)}(0)=0,\)
contradicting Lemma~\ref{lem:valuation-recursion-tM}, which gives
\(v_t(f_M^{\circ m}(0))=1\) for every \(m\geq 1\). Therefore
\(f_M^{\circ n}(x)\) has no multiple roots, and hence is separable over \(K\).
\end{proof}

\begin{thm}\label{thm:conditional-wreath}
Let $q=p^e\geq 5$ be an odd prime power, $d=q-1$, and $K=\mathbb F_q(t)$. Assume that \(M+d-1\not\equiv 0 \pmod p\) and that the Kummer class \([-t^{1-M}]\in K^\times/(K^\times)^d\) has order \(d\). Then for every \(n\ge 1\), the n-th iterated Galois group of $f_M(x)=t^Mx^d+t$ satisfies \( \mathcal{G}_n \cong C_d \wr C_d \wr \cdots \wr C_d,\) with \(n\) many factors.
\end{thm}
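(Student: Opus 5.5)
The plan is to argue by induction on \(n\), proving simultaneously that \(\mathcal{G}_n\cong W_n\) and that \(\mathcal{G}_n\) acts transitively on \(R_n\).

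\textbf{Base case.} At level one, \(f_M(x)=0\) is equivalent to
\[
x^d=-t/t^M=-t^{1-M}.
\]
By Theorem~\ref{Kummer-Theory}, \(\mathcal{G}_1\cong\operatorname{Hom}(\langle[-t^{1-M}]\rangle,\mu_d)\). Since the class has order \(d\) by hypothesis, this gives \(\mathcal{G}_1\cong C_d\). Its action on the fiber \(\{\zeta\alpha:\zeta\in\mu_d\}\) is by multiplication, hence simply transitive on \(R_1\). Separability of all iterates is supplied by Lemma~\ref{Lemma:f_M_separable}, so Theorem~\ref{kkummer-thm-used} applies and gives the embedding \(\mathcal{G}_n\hookrightarrow C_d^{R_{n-1}}\rtimes\mathcal{G}_{n-1}\).

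\textbf{Inductive step.} Let \(n\ge2\). Apply Proposition~\ref{prop:general-kummer-independence} with \(A=t^M\), \(B=t\), and \(P=P_n\), the primitive simple divisor of \(a_n\) from Theorem~\ref{thm:PSD}. The hypotheses are checked as follows.
\begin{itemize}
\item \(v_P(a_n)=1\) holds by construction.
\item \(v_P(a_m)=0\) for \(m<n\) holds by primitivity. Here we need \(P_n\ne t\), which is true because \(P_n\mid b_n\) and \(t\nmid b_n\) by Lemma~\ref{lem:valuation-recursion-tM}. Together with \(P_n\nmid a_1\cdots a_{n-1}\), this gives \(v_P(a_m)=0\).
\item \(v_P(A)=v_P(B)=0\) also follows from \(P\ne t\).
\item Transitivity of \(\mathcal{G}_{n-1}\) on \(R_{n-1}\) is the inductive hypothesis.
\end{itemize}
The proposition then yields \(\operatorname{Gal}(K_n/K_{n-1})\cong C_d^{R_{n-1}}\). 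Therefore
\[
|\mathcal{G}_n|=d^{|R_{n-1}|}\,|\mathcal{G}_{n-1}|=d^{d^{n-1}}|W_{n-1}|=|W_n|.
\]
Since \(\mathcal{G}_n\) embeds in \(C_d^{R_{n-1}}\rtimes\mathcal{G}_{n-1}\cong C_d^{S_{n-1}}\rtimes W_{n-1}=W_n\), comparing orders shows the embedding is an isomorphism. Here \(R_{n-1}\) is identified with \(S_{n-1}\) as \(W_{n-1}\)-sets via the inductive isomorphism, which is compatible with the tree action.

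\textbf{Transitivity.} Transitivity on \(R_n\) follows from the same order count. \(\mathcal{G}_{n-1}\) is transitive on \(R_{n-1}\), and the kernel \(\operatorname{Gal}(K_n/K_{n-1})\cong\prod_\beta\mu_d\) acts transitively on each fiber \(f_M^{-1}(\beta)\). Hence \(\mathcal{G}_n\) is transitive on \(R_n\). Equivalently, \(f_M^{\circ n}\) is irreducible; Capelli's Lemma~\ref{Capelli's Lemma} gives this as well but is not needed.

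The main obstacle is not the group theory but ensuring that the input of Proposition~\ref{prop:general-kummer-independence} is available at every level. This is Theorem~\ref{thm:PSD}, which requires \(M+d-1\not\equiv0\pmod p\), and that is assumed. The only remaining subtle point is that the base case needs the full order-\(d\) condition. Without it, \(\mathcal{G}_1\) is a proper subgroup and transitivity already fails at level one, so the induction could not start.
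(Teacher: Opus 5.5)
Your proposal is correct and follows the paper's proof: induction on $n$, with the base case from Kummer theory and the order-$d$ hypothesis, and the inductive step obtained by feeding the primitive simple divisor from Theorem~\ref{thm:PSD} into Proposition~\ref{prop:general-kummer-independence} with $A=t^M$ and $B=t$. Your order count against the embedding of Theorem~\ref{kkummer-thm-used}, your explicit check that $P_n\neq t$, and your separate transitivity argument just spell out steps the paper states more briefly.
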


\begin{proof}
We proceed by induction on \(n\). For \(n=1\), the equation
\(f_M(x)=0\) is equivalent to \(x^d=-t^{1-M}.\) Since \(\mu_d\subset K\) and the class \([-t^{1-M}]\) has order \(d\) in \(K^\times/(K^\times)^d\), Theorem~\ref{Kummer-Theory} implies that \(\mathcal{G}_1\cong C_d.\) In particular, \(\mathcal G_1\) acts transitively on \(R_1\). Now assume that \(\mathcal{G}_{n-1}\cong C_d\wr C_d\wr\cdots\wr C_d\) with \(n-1\) factors. Then \(\mathcal{G}_{n-1}\) acts transitively on \(R_{n-1}\). By Theorem \ref{thm:PSD}, \(a_n\) has a primitive simple divisor \(P_n\). Hence \(
v_{P_n}(a_n)=1, \ v_{P_n}(a_m)=0 \ (1\le m<n), \)
and \( v_{P_n}(t)=v_{P_n}(t^M)=0.\) Together with Lemma \ref{Lemma:f_M_separable}, all conditions in Proposition~\ref{prop:general-kummer-independence}
are satisfied. It follows that the Kummer classes
\( \left[ \frac{\beta-t}{t^M} \right],
\textrm{ where } \beta\in R_{n-1}, \) are linearly independent in
\(K_{n-1}^\times /(K_{n-1}^\times)^d. \) Hence Proposition~\ref{prop:general-kummer-independence} yields
\(\operatorname{Gal}(K_n/K_{n-1})\cong\prod_{\beta\in R_{n-1}} C_d.
\) Since \(\mathcal G_{n-1}\) acts on \(R_{n-1}\), the natural action on the preimage tree yields \[
\mathcal{G}_n
\cong
\left(
\prod_{\beta\in R_{n-1}} C_d
\right)
\rtimes \mathcal{G}_{n-1}.
\]
Therefore \( \mathcal{G}_n \cong C_d \wr \mathcal{G}_{n-1}.\) Using the induction hypothesis, we obtain \( \mathcal{G}_n\cong C_d\wr C_d\wr\cdots\wr C_d,\) with \(n\) factors.
\end{proof}

Theorem~\ref{thm:conditional-wreath} establishes the maximality of the arboreal Galois groups associated with the family \(f_M(x)=t^Mx^d+t\). The following remarks clarify the hypotheses of the theorem and explain the role of the primitive simple divisor theorem in the proof.

\begin{rem}\label{maxcond}
The first-level hypothesis in Theorem~\ref{thm:conditional-wreath} can be checked explicitly. Since
\(
[-t^{1-M}]
=
[-1]\,[t^{1-M}]
\)
in \(K^\times/(K^\times)^d\), the class \([t^{1-M}]\) has order
\(
\frac{d}{\gcd(d,1-M)}.
\)
Consequently,
\[
\operatorname{ord}\bigl([-t^{1-M}]\bigr)
=
\operatorname{lcm}\left(
2,
\frac{d}{\gcd(d,1-M)}
\right).
\]
Hence \([-t^{1-M}]\) has order \(d\) if and only if
\[
\operatorname{lcm}\left(
2,
\frac{d}{\gcd(d,1-M)}
\right)
=d.
\]
\end{rem}

\begin{rem}
The transitivity hypothesis in Theorem \ref{thm:conditional-wreath} is essential. If the first-level class \([-t^{1-M}] \in K^{\times}/(K^{\times})^{d}\) does not have order \(d\), then \(\mathcal G_{1}\) is not transitive on \(R_{1}\), and the full cyclic wreath product need not occur.\\
For \(\beta \in R_{n-1}\), set
\[
u_{\beta} := \frac{\beta - t}{t^{M}} \qquad and \qquad 
\Gamma_{n} := \left\langle [u_{\beta}] : \beta \in R_{n-1} \right\rangle
\subset K_{n-1}^{\times}/(K_{n-1}^{\times})^{d}.
\]
Since \(K_{n}\) is obtained from \(K_{n-1}\) by adjoining \(d^{th}\) roots of the elements \(u_{\beta}\), Theorem \ref{Kummer-Theory} yields
\[
\operatorname{Gal}(K_{n}/K_{n-1}) \cong \operatorname{Hom}(\Gamma_{n}, \mu_{d}).
\]
Thus, in the nontransitive case, the correct replacement for the full wreath product description is the Kummer-theoretic description above together with the natural embedding \(\mathcal{G}_{n} \hookrightarrow (C_{d})^{R_{n-1}} \rtimes \mathcal{G}_{n-1}.\)
The subgroup \(\Gamma_{n}\) records precisely the Kummer relations among the classes
\(\left[\frac{\beta - t}{t^{M}}\right], \ \beta \in R_{n-1}.
\)
\end{rem}

\begin{rem}
The role of the primitive simple divisor theorem is to provide a valuation separating one Kummer class \(\left[\frac{\beta^{\ast} - t}{t^{M}}\right].\) When \(\mathcal{G}_{n-1}\) acts transitively on \(R_{n-1}\), this valuation may be transported by Galois conjugation to every root \(\beta \in R_{n-1}\). Consequently, the classes
\(\left[\frac{\beta - t}{t^{M}}\right], \ \beta \in R_{n-1},
\) are linearly independent in \(K_{n-1}^{\times} / (K_{n-1}^{\times})^{d}.\)  If transitivity fails, the same argument only yields independence on the \( \mathcal{G}_{n-1}\)-orbit containing \(\beta^{\ast}\), additional orbitwise information is still required.
\end{rem}

\section{The Irreducible Conjugate Case}\label{Sec.Irreducible Conjugate} 

Let \(f_c(x)=c x^d+t\in K[x]. \) Assume throughout this section that $q\geq 7$, so $d=q-1 \geq 6$. This additional assumption on $q$ is used in the proof of Proposition~\ref{lem:primitive_place_F}. We further assume that \(c\in (K^\times)^{d-1}\), and
\(v_t(c)=0\). Choose \(u\in K^\times\) satisfying \(u^{d-1}=c^{-1},\) we define
\(F(x)=u^{-1}f_c(ux) =x^d+C, \textrm{ with } \ C=\frac{t}{u}.\) Indeed,
\[
u^{-1}f_c(ux)
=
u^{-1}\left(cu^d x^d+t\right)
=
cu^{d-1}x^d+\frac{t}{u}
=
x^d+\frac{t}{u}.
\]
From
\(f_c(x)=uF(u^{-1}x)\), one can deduce that
\(f_c^{\n}(x)=uF^{\n}(u^{-1}x)\) for every \(n\ge 1\). Therefore \(f_c\) and \(F\) are linearly conjugate over \(K\) forces them to share the same splitting field over \(K\). Namely, \(\operatorname{Spl}_K(f_c^{\circ n})
=\operatorname{Spl}_K(F^{\circ n}).\) Thus the corresponding iterated Galois groups coincide.  The irreducibility and Galois-theoretic properties of the iterates of \(f_c\) may be studied through the simpler polynomial \( F(x)=x^d+C.\)

\subsection{Irreducibility of the Iterates}
 
\begin{lem}\label{lem:critical-orbit}
For every integer \(n\ge 1\), \(v_t\!\left(F^{\circ n}(0)\right)=1.\) In particular, \(F^{\circ n}(0)\neq 0\) and \(F^{\circ n}(x)\) is separable over \(K\).
\end{lem}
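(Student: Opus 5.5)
We argue by induction on \(n\), in the same way as Lemma~\ref{lem:valuation-recursion-tM}, after pinning down the \(t\)-adic valuation of \(u\).

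\textbf{Valuation of \(u\).} Since \(u^{d-1}=c^{-1}\), we have \((d-1)v_t(u)=-v_t(c)=0\), so \(v_t(u)=0\). Hence
\[
v_t(C)=v_t(t)-v_t(u)=1 .
\]
This gives the base case, because \(F(0)=C\) and so \(v_t(F(0))=1\).

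\textbf{Inductive step.} Suppose \(v_t(F^{\circ n}(0))=1\). Then
\[
F^{\circ(n+1)}(0)=\bigl(F^{\circ n}(0)\bigr)^d+C .
\]
The first term has valuation \(d\ge 6>1\), and the second has valuation exactly \(1\). By the strict ultrametric inequality, the sum has valuation exactly \(1\). Note that \(C\) need not lie in \(\mathcal R\), so we work throughout with \(v_t\) on \(K\) rather than factoring out \(t\) in \(\mathcal R\).

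\textbf{Nonvanishing.} A nonzero valuation forces \(F^{\circ n}(0)\neq0\) for all \(n\ge1\).

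\textbf{Separability.} We copy the proof of Lemma~\ref{Lemma:f_M_separable}. Since \(F'(x)=dx^{d-1}\) and \(p\nmid d\), the only critical point of \(F\) is \(0\). By the chain rule,
\[
(F^{\circ n})'(x)=d^n\prod_{i=0}^{n-1}F^{\circ i}(x)^{d-1}.
\]
Suppose \(\alpha\) is a multiple root of \(F^{\circ n}\). Then \(F^{\circ i}(\alpha)=0\) for some \(0\le i\le n-1\). Applying \(F^{\circ(n-i)}\) gives \(F^{\circ(n-i)}(0)=F^{\circ n}(\alpha)=0\), which contradicts the nonvanishing just proved.

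There is no real obstacle here. The only point needing care is \(v_t(u)=0\), which uses \(d-1\neq0\) (true since \(q\ge 7\)) and the hypothesis \(v_t(c)=0\).
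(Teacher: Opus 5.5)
Your proposal is correct and follows the paper's proof essentially step for step: $v_t(C)=1$ from $v_t(u)=0$, the ultrametric induction on $(F^{\circ n}(0))^d+C$, and separability via the chain-rule factorization of $(F^{\circ n})'$. The one small addition is your explicit derivation of $v_t(u)=0$ from $u^{d-1}=c^{-1}$ and $v_t(c)=0$, which the paper simply asserts.
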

\begin{proof} Since \(C=t/u\) and \(v_t(u)=0\), we have \(
v_t(C)=1.\) We prove by induction that \(v_t\!\left(F^{\circ n}(0)\right)=1\) for every \(n\ge 1\). For \(n=1\), we have \(F(0)=C,\) so \(v_t(F(0))=v_t(C)=1.\) Assume now that \(v_t\!\left(F^{\circ n}(0)\right)=1.\) Then \(F^{\circ(n+1)}(0)=F\!\left(F^{\circ n}(0)\right)=\left(F^{\circ n}(0)\right)^d+C.\) Since \(d\ge 2\), we have \(v_t\!\left(\left(F^{\circ n}(0)\right)^d\right)=d>1,\) whereas \(v_t(C)=1.\) Therefore \(v_t\!\left(F^{\circ (n+1)}(0)\right)=1.\) In particular, \(F^{\circ n}(0)\neq 0\) for every \(n\ge 1\). It remains to prove separability. Since \(F'(x)=d x^{d-1},\) the only critical point of \(F\) is \(0\). Suppose that \(F^{\circ n}(x)\) had a repeated root \(\alpha\). Then \(F^{\circ n}(\alpha)=0\) and \(\left(F^{\circ n}\right)'(\alpha)=0.\) By the chain rule,
\[
\left(F^{\circ n}\right)'(x)
=
d^n\prod_{i=0}^{n-1}
\left(F^{\circ i}(x)\right)^{d-1}.
\]
Hence \(F^{\circ i}(\alpha)=0\) for some \(0\le i\le n-1\). Applying \(F^{\circ(n-i)}\) gives \(F^{\circ(n-i)}(0)=0,\) contradicting the valuation statement proved above. Therefore \(F^{\circ n}(x)\) is separable over \(K\).
\end{proof}

\begin{thm}\label{thm:F-iterates-irreducible}
For every integer \(n\ge 1\), the iterate \(F^{\circ n}(x)
\) is irreducible over \(K\).
\end{thm}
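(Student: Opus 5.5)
We prove irreducibility of \(F^{\circ n}\) by induction on \(n\), using Capelli's Lemma (Lemma~\ref{Capelli's Lemma}) with an Eisenstein argument at the place \(t\) in each level. The input from Lemma~\ref{lem:critical-orbit} is that \(v_t(F^{\circ m}(0))=1\) for all \(m\ge1\). The natural goal is that \(t\) is totally ramified in each layer: if \(\alpha_n\) is a root of \(F^{\circ n}\), we aim to show \(t\) is totally ramified in \(K(\alpha_n)/K\) with ramification index \(d^n\).

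For \(n=1\), we have \(F(x)=x^d+C\) with \(v_t(C)=1\). So \(F\) is Eisenstein at \(t\) (note \(v_t(u)=0\)), hence irreducible, and \(t\) is totally ramified in \(K(\alpha_1)\), with \(\alpha_1\) a uniformizer above \(t\).

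For the inductive step, set \(L=K(\alpha_{n-1})\), where \(\alpha_{n-1}\) is a root of \(F^{\circ(n-1)}\) (irreducible by induction). Write \(\mathfrak p\) for the unique prime of \(L\) above \(t\), with \(e(\mathfrak p/t)=d^{n-1}\). Since \(F^{\circ n}(x)=F^{\circ(n-1)}(F(x))\), Capelli's Lemma reduces irreducibility of \(F^{\circ n}\) to irreducibility of
\[x^d+C-\alpha_{n-1}\]
over \(L\). It therefore suffices to show \(v_{\mathfrak p}(C-\alpha_{n-1})=1\), because the polynomial is then Eisenstein at \(\mathfrak p\). To compute this valuation, use the norm:
\[N_{L/K}(\alpha_{n-1}-C)=\pm F^{\circ(n-1)}(C)=\pm F^{\circ n}(0),\]
since \(F^{\circ(n-1)}\) is monic of degree \(d^{n-1}\) and is the minimal polynomial of \(\alpha_{n-1}\). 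By Lemma~\ref{lem:critical-orbit}, \(v_t(F^{\circ n}(0))=1\). Because \(\mathfrak p\) is the only prime above \(t\) and \(f(\mathfrak p/t)=1\), we get \(v_t(N_{L/K}(y))=v_{\mathfrak p}(y)\) for \(y\in L^\times\). Hence \(v_{\mathfrak p}(\alpha_{n-1}-C)=1\). This gives irreducibility, and it also shows \(\mathfrak p\) is totally ramified in \(K(\alpha_n)/L\) with index \(d\), which keeps the induction hypothesis (total ramification of degree \(d^n\) over \(t\)) alive.

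The main point to check carefully is the norm-to-valuation step. It requires carrying total ramification at \(t\) as part of the induction, so that \(t\) has a unique prime above it with residue degree \(1\); otherwise the norm valuation would only bound a sum over several primes. Equivalently, one can phrase the whole argument as: \(F^{\circ n}(x)\) is irreducible over the completion \(K_t\) by the same Capelli/Eisenstein induction carried out in the totally ramified local tower, and irreducibility over \(K_t\) implies irreducibility over \(K\). No hypothesis beyond \(v_t(C)=1\) and monicity is needed.
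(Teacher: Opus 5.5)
Your argument is correct, but it differs from the paper's at the decisive step. Both proofs use Capelli's Lemma and the same norm identity $N_{K(\alpha)/K}(\alpha-C)=\pm F^{\circ n}(0)$, together with $v_t(F^{\circ n}(0))=1$.

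The paper then stays inside Kummer theory. Because $\mu_d\subset K(\alpha)$, the binomial $x^d-(\alpha-C)$ is irreducible provided $\alpha-C$ is not an $\ell$-th power for any prime $\ell\mid d$. An $\ell$-th power would have norm in $(K^\times)^\ell$, and so a $t$-valuation divisible by $\ell$, which contradicts valuation $1$. This needs no knowledge of how $t$ decomposes in $K(\alpha)$, so no ramification has to be tracked. It does rely on $\mu_d\subset K$, which holds since $d=q-1$.

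You instead carry total ramification of $t$ through the induction. That turns the norm valuation into the exact equality $v_{\mathfrak p}(\alpha_{n-1}-C)=1$ and lets you finish with Eisenstein. This costs extra bookkeeping, but it uses no roots of unity and gives more: $t$ is totally ramified in $K(\alpha_n)$, and $F^{\circ n}$ is irreducible over the completion $K_t$.

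In fact your idea works without any induction. Since $v_t(C)=1$, all coefficients lie in the local ring at $t$, and reducing them modulo $t$ gives $F^{\circ n}(x)\equiv x^{d^n}$. Combined with $v_t(F^{\circ n}(0))=1$ from Lemma~\ref{lem:critical-orbit}, this shows $F^{\circ n}$ is itself Eisenstein at $t$.
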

\begin{proof}
We prove the result by induction on \(n\). For \(n=1\), we compute the valuation $v_t(-C)=1$. Applying Theorem \ref{Kummer-Theory}, we know that the class $[-C]\in K^\times/(K^\times)^d$ has order $d$, thus we can deduce that \(F(x)\) is irreducible over \(K\). Now assume that \(F^{\circ(n-1)}(x)\) is irreducible over \(K\), where \(n\ge 2\). Let \(\alpha\) be a root of \(F^{\circ (n-1)}(x)\). By Lemma \ref{Capelli's Lemma} , \(F^{\circ n}(x)\) is irreducible over \(K\) if and only if \(F(x)-\alpha\) is irreducible over \(K(\alpha)\). Since \(F(x)-\alpha=x^d+C-\alpha=x^d-(\alpha-C),\) and since \(\mu_d\subset K\subset K(\alpha)\), Theorem \ref{Kummer-Theory} shows that \(F(x)-\alpha\) is irreducible over \(K(\alpha)\) if and only if \(\alpha-C\notin \left(K(\alpha)^\times\right)^\ell\) for every prime divisor \(\ell\mid d\). We prove this using the norm. Since \(F^{\circ (n-1)}(x)\) is monic and irreducible over \(K\), we have \[N_{K(\alpha)/K}(\alpha-C)=\prod_{\alpha'}(\alpha'-C)=\pm F^{\circ(n-1)}(C),\] where the product runs over the conjugates \(\alpha'\) of \(\alpha\). But \(C=F(0)\), so \( F^{\circ (n-1)}(C)=F^{\circ n}(0).\) Hence \(N_{K(\alpha)/K}(\alpha-C)=\pm F^{\circ n}(0).\) By Lemma~\ref{lem:critical-orbit}, \( v_t\!\left(F^{\circ n}(0)\right)=1.\) Therefore \(v_t\!\left(N_{K(\alpha)/K}(\alpha-C)\right)=1.\) Now fix a prime divisor \(\ell\mid d\). If \( \alpha-C\in \left(K(\alpha)^\times\right)^\ell,\) then its norm would lie in \((K^\times)^\ell\). Hence every valuation of the norm would be divisible by \(\ell\). This contradicts \( v_t\!\left(N_{K(\alpha)/K}(\alpha-C)\right)=1.\) Therefore \(\alpha-C\notin \left(K(\alpha)^\times\right)^\ell\) for every prime divisor \(\ell\mid d\). Thus \(F(x)-\alpha\) is irreducible over \(K(\alpha)\). By Lemma \ref{Capelli's Lemma} , \(F^{\circ n}(x)\) is irreducible over \(K\). This completes the induction.
\end{proof}

\begin{cor}\label{cor:f-iterates-irreducible}
For every integer \(n\ge 1\), the iterate \(f_c^{\circ n}(x)\) is irreducible and separable over \(K\).
\end{cor}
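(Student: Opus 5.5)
The corollary follows by transporting the two earlier results on \(F\) back to \(f_c\) through the linear conjugacy \(f_c(x)=uF(u^{-1}x)\) set up at the start of this section.

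The first step is to record the iterated identity
\[
f_c^{\circ n}(x)=u\,F^{\circ n}(u^{-1}x),
\]
which was noted above. It follows by induction on \(n\): the case \(n=1\) is the definition of \(F\), and
\[
f_c^{\circ (n+1)}(x)=f_c\bigl(uF^{\circ n}(u^{-1}x)\bigr)=uF\bigl(u^{-1}\cdot uF^{\circ n}(u^{-1}x)\bigr)=uF^{\circ(n+1)}(u^{-1}x).
\]
Since \(u\in K^\times\), the map \(x\mapsto u^{-1}x\) is a \(K\)-linear change of variable.

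For irreducibility, suppose \(f_c^{\circ n}=gh\) with \(g,h\in K[x]\) of positive degree. Substituting \(x\mapsto ux\) gives
\[
F^{\circ n}(x)=u^{-1}g(ux)\,h(ux),
\]
and both factors still have positive degree and coefficients in \(K\). This contradicts Theorem~\ref{thm:F-iterates-irreducible}. Hence \(f_c^{\circ n}\) is irreducible over \(K\).

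For separability, the roots of \(f_c^{\circ n}\) are exactly \(u\alpha\) with \(\alpha\) a root of \(F^{\circ n}\). Multiplication by \(u\neq 0\) is injective, so distinct roots of \(F^{\circ n}\) give distinct roots of \(f_c^{\circ n}\). By Lemma~\ref{lem:critical-orbit}, \(F^{\circ n}\) has \(d^n\) distinct roots, so \(f_c^{\circ n}\), which also has degree \(d^n\), has \(d^n\) distinct roots and is separable.

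There is no real obstacle. The only care needed is to check that \(u\in K^\times\), which holds by the standing assumption \(c\in (K^\times)^{d-1}\), so the change of variable is defined over \(K\) and preserves irreducibility over \(K\) rather than merely over an extension.
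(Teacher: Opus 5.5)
Your proposal is correct and follows the same route as the paper: both arguments transport irreducibility and separability from \(F^{\circ n}\) to \(f_c^{\circ n}\) through the \(K\)-rational linear conjugacy \(f_c^{\circ n}(x)=uF^{\circ n}(u^{-1}x)\). You fill in details the paper leaves implicit (the induction for the iterated identity, the transport of a factorization, and the bijection of roots), and you correctly trace separability of \(F^{\circ n}\) to Lemma~\ref{lem:critical-orbit}.
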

\begin{proof}
By Theorem~\ref{thm:F-iterates-irreducible}, \(F^{\circ n}(x)\) is irreducible and separable over \(K\). Since \(f_c^{\circ n}(x)=u\,F^{\circ n}(u^{-1}x),\) the polynomials \(f_c^{\circ n}\) and \(F^{\circ n}\) are linearly conjugate over \(K\). Hence irreducibility and separability are preserved.
\end{proof}

\subsection{Primitive Simple Divisors and Kummer Independence}\label{psd2}
Having established the irreducibility of all iterates of \(F\), we now turn to the existence of primitive simple divisors in its critical orbit. The main result of this subsection shows that, for every \(n\ge1\), the element $F^{\circ n}(0)$ has a primitive simple divisor. This provides the key arithmetic input needed for the Kummer independence criterion of Section~\ref{subsec:full_wreath_product}.

\begin{prop}\label{lem:primitive_place_F}
For every integer $n \ge 1$, there exists a finite place $P$ of $K$ such that \[v_P\left(F^{\circ (n+1)}(0)\right)=1,\qquad v_P\left(F^{\circ m}(0)\right)=0 \quad (1\le m\le n).\]
In particular, \(\gcd\left(v_P\left(F^{\circ(n+1)}(0)\right),d\right)=1.\)
\end{prop}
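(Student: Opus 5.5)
We argue as in Section~\ref{psd1}, adapting the proof of Theorem~\ref{thm:PSD} to the fact that $C=t/u$ is only a rational function. Write $C=A/B$ with $A,B\in\mathcal R$ coprime. Since $v_t(C)=1$, we have $t\,\|\,A$ and $t\nmid B$. Put $c_n=F^{\circ n}(0)$. An induction gives
\[
c_n=\frac{A_n}{B^{d^{n-1}}},\qquad A_1=A,\qquad A_{n+1}=A_n^{d}+A\,B^{d^n-1},
\]
with $\gcd(A_n,B)=1$ for all $n$. Reducing modulo $A$ shows $A\mid A_n$, so we write $A_n=A\,E_n$. Then $E_1=1$ and
\[
E_{n+1}=A^{d-1}E_n^{\,d}+B^{d^n-1}.
\]
For every finite place $P\nmid B$ we have $v_P(c_n)=v_P(A)+v_P(E_n)$. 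We therefore look for a prime $P\nmid AB$ with $v_P(E_{n+1})=1$ and $P\nmid E_1\cdots E_n$. Such a $P$ satisfies $v_P(c_{n+1})=1$ and $v_P(c_m)=0$ for $m\le n$, which is exactly the statement.

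First I check the hypotheses of Theorem~\ref{thm:mason-stothers} for the triple $E_{n+1},\ -A^{d-1}E_n^{d},\ -B^{d^n-1}$.
\begin{itemize}
\item Pairwise coprimality: modulo any prime dividing $AE_n$ we get $E_{n+1}\equiv B^{d^n-1}$, a unit; modulo primes of $B$ we get $E_{n+1}\equiv A^{d-1}E_n^d$, a unit by induction; and $\gcd(A,B)=1$.
\item The exceptional case: $v_t(A^{d-1}E_n^d)=d-1\equiv -2\pmod p$, with $t\nmid E_n$ by the previous point. Since $p$ is odd, this is nonzero mod $p$, so this term is not a $p$-th power and has nonzero derivative.
\end{itemize}
Mason--Stothers then yields
\[
D_{n+1}:=\deg E_{n+1}\le \deg\operatorname{rad}(A)+\deg\operatorname{rad}(B)+\deg\operatorname{rad}(E_n)+\deg\operatorname{rad}(E_{n+1})-1 .
\]
Using $\deg\operatorname{rad}(E_n)\le D_n$, this gives $\deg\operatorname{rad}(E_{n+1})\ge D_{n+1}-D_n-\deg A-\deg B$.

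Next, the degrees. I will show $D_{n+1}$ is exactly, or at least up to bounded error, $\max\bigl((d-1)\deg A+dD_n,\ (d^n-1)\deg B\bigr)$, so that $D_{n+1}\asymp d^{\,n}\max(\deg A,\deg B)$ and $D_{n+1}-D_n\ge (1-1/d)D_{n+1}$ minus lower-order terms. Possible cancellation of leading terms when the two degrees coincide has to be handled. I would control it by a lower bound on $D_{n+1}$ from the valuation at infinity of $c_{n+1}$, computed case by case according to the sign of $v_\infty(C)$.

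Then the non-primitive part. If an irreducible $P\nmid AB$ divides both $E_{n+1}$ and $E_m$ with $m\le n$, the return-time argument from Lemma~\ref{lem:primitive-radical-estimate-tM} (applied to $F$ over the residue field, where $P$ is a place of good reduction) gives $P\mid E_r$ for some $r\mid n+1$, $r<n+1$. Hence
\[
N_{n+1}\ge \deg\operatorname{rad}(E_{n+1})-\sum_{r\mid n+1,\ r\le (n+1)/2} D_r .
\]
This divisor sum is $O(d^{(n+1)/2})$, and the aim is $N_{n+1}>D_{n+1}/2$. The hypothesis $d\ge 6$ is used exactly here, to absorb the constant losses $\deg A+\deg B$ and the divisor sum in the small cases $n=1,2,3$ as well as uniformly. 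Once $N_{n+1}>D_{n+1}/2$, some primitive prime occurs in $E_{n+1}$ to exponent exactly $1$, as in the proof of Theorem~\ref{thm:PSD}, and the $\gcd$ statement is immediate.

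The main obstacle is the degree bookkeeping. Unlike the $f_M$ case, the recursion mixes $\deg A$ and $\deg B$, so $D_{n+1}-D_n$ must be bounded below carefully, including the equal-degree cancellation case and $n=1$. The $n=1$ case, where $E_2=A^{d-1}+B^{d-1}$, may need a direct separate check.
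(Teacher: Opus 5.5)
Your setup matches the paper's: your $E_n$ is its $H_n$, with the same recursion, the same coprimality checks, and the same use of $v_t(A^{d-1}E_n^d)=d-1\not\equiv 0\pmod p$ to exclude the exceptional case of Mason--Stothers. The gap is in the quantitative step you postpone as bookkeeping, and it is caused by how you set it up. You replace the maximum of the three degrees on the left of Mason--Stothers by $D_{n+1}=\deg E_{n+1}$, and you drop the $-1$. Your bound $\deg\operatorname{rad}(E_{n+1})\ge D_{n+1}-D_n-\deg A-\deg B$ therefore loses exactly what is needed when leading terms cancel, and cancellation does occur. For $q=7$ take $u=1-t$, so $C=t/(1-t)$, $A=t$, $B=1-t$. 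Then $E_2=t^5+(1-t)^5=5t^4+4t^3+3t^2+2t+1$, so $D_2=4$. Your bound gives only $N_2=\deg\operatorname{rad}(E_2)\ge 2=D_2/2$, not the strict inequality $N_2>D_2/2$ you need. More generally, for $d=6$, whenever $\deg A=\deg B=M$ and $A+B\in\mathbb F_q^\times$, one has $D_2=(d-2)M$ and your bound returns exactly $D_2/2$. So the claim that $d\ge 6$ absorbs the losses fails at $n=1$, the case you flagged. The $v_\infty$ analysis you propose can be completed: the drop below $(d^n-1)M$ is at most $v_\infty(C+1)\le M$, and it occurs only when $\deg A=\deg B$, $C(\infty)=-1$ and $n+1$ is even. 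Even then, $n=1$ closes only if you restore the $-1$.

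The missing observation is that you never need a lower bound on $D_{n+1}$. Mason--Stothers bounds $X=\max\{\deg(A^{d-1}E_n^d),\ \deg B^{d^n-1},\ D_{n+1}\}$, and $X\ge (d^n-1)M$ with $M=\max(\deg A,\deg B)$ comes for free. If $\deg B=M$, use the $B$-term. If $\deg A>\deg B$, there is no cancellation at any level, so $\deg E_m=(d^{m-1}-1)M$ by induction. Since $D_{n+1}\le X$ and $D_m\le (d^{m-1}-1)M$, you get $N_{n+1}\ge X+1-2M-D_n-\sum_{r\mid n+1,\,r<n+1}D_r>X/2\ge D_{n+1}/2$ for $d\ge 6$ and every $n\ge 1$, and your counting argument then finishes.

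This is how the paper bounds the radical: its claim that the maximum is at least $(d^n-1)M$. It extracts the simple prime differently, however. For $n=1$ it shows that if no prime of $H_2$ were simple, then $\operatorname{rad}(H_2)\mid AB'-A'B$, which has degree at most $2M-1$; this contradicts $\deg\operatorname{rad}(H_2)>(d-3)M$. For $n\ge 2$ it shows the new primes $S_{n+1}$ would divide the nonzero Wronskian-type polynomial $J_n$, of degree at most $2M+\deg H_n-1$. That derivative argument only requires $\deg S_{n+1}$ to exceed roughly $d^{n-1}M$, rather than $D_{n+1}/2$, so it has more room. Your return-time restriction of the non-primitive primes to $E_r$ with $r\mid n+1$ is sharper than the paper's removal of all primes of $ABH_1\cdots H_n$, but neither refinement is essential once the maximum is used.
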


\begin{proof}
Write \(C=A/B\), where \(A,B\in \mathcal R\) are coprime. Since \(v_t(C)=1\), we choose \(A\) and \(B\) so that
\begin{equation}\label{eq:AB-t-conditions}
v_t(A)=1,\qquad t\nmid B.
\end{equation}
Set \(M_A=\deg A\), \(M_B=\deg B\), and \(M=\max\{M_A,M_B\}\).
For \(m\geq 1\), write \(F^{\m}(0)=C\,G_m(C),\)
where
\begin{equation}\label{eq:G-recursion}
G_1(x)=1,\qquad G_{m+1}(x)=x^{d-1}G_m(x)^d+1.
\end{equation}
Then \(\deg_x G_m=d^{m-1}-1.\) Define \(H_m\in \mathcal R\) by
\[G_m(C)=\frac{H_m}{B^{d^{m-1}-1}}.\]
Then \(H_1=1\), and from \eqref{eq:G-recursion} we obtain
\begin{equation}\label{eq:H-recursion}
H_{m+1}=A^{d-1}H_m^d+B^{d^m-1}.
\end{equation}
Consequently,
\begin{equation}\label{eq:Fm-Hm}
F^{\m}(0)=\frac{A H_m}{B^{d^{m-1}}}.
\end{equation}\\
We first record the elementary coprimality properties that will be used repeatedly. From
\(\gcd(A,B)=1\) and \eqref{eq:H-recursion}, one obtains
\begin{equation}\label{eq:H-coprime-AB}
\gcd(H_m,A)=\gcd(H_m,B)=1
\qquad (m\geq 1).
\end{equation}
Moreover, since \(H_{m+1}\equiv B^{d^m-1}\pmod{H_m},\) we have
\begin{equation}\label{eq:H-coprime-previous}
\gcd(H_{m+1},H_m)=1.
\end{equation}
Finally, \eqref{eq:AB-t-conditions} and \eqref{eq:H-recursion} imply by induction that
\begin{equation}\label{eq:t-not-divide-Hm}
t\nmid H_m
\qquad (m\geq 1).
\end{equation}\\
We also need a direct degree estimate for \(H_m\). Since \(G_m(X)\) has degree \(d^{m-1}-1\), the numerator of \(G_m(A/B)\) has degree at most \((d^{m-1}-1)M\). Hence
\begin{equation}\label{eq:degree-Hm-upper}
\deg H_m\leq (d^{m-1}-1)M.
\end{equation}\\
We now treat first the case \(n=1\). In this case
\(H_2=A^{d-1}+B^{d-1}.\) We claim that \(H_2\) has an irreducible divisor \(P\) such that \(P\nmid AB,\ v_P(H_2)=1.\) Indeed, suppose for contradiction that every irreducible divisor of \(H_2\) outside \(AB\) occurs with multiplicity at least two. Since \(\gcd(H_2,AB)=1\), this implies \(\operatorname{rad}(H_2)\mid H_2'.\) Let \(P\mid H_2\). Since \(P\nmid AB\), we have
\(A^{d-1}\equiv -B^{d-1}\pmod P.\) Using \(H_2'=(d-1)\left(A^{d-2}A'+B^{d-2}B'\right),\)
and multiplying by \(AB\), we obtain \(P\mid AB'-A'B.\) Thus
\begin{equation}\label{eq:rad-H2-Wronskian}
\operatorname{rad}(H_2)\mid AB'-A'B.
\end{equation}
The polynomial \(AB'-A'B\) is nonzero. Otherwise, \((A/B)'=0\), so \(A/B\in \mathbb{F}_q(t^p)\), contradicting \(v_t(A/B)=1\). Therefore
\begin{equation}\label{eq:Wronskian-degree}
\deg \operatorname{rad}(H_2)\leq \deg(AB'-A'B)\leq \deg A+\deg B-1\leq 2M-1.
\end{equation}
On the other hand, applying Theorem \ref{thm:mason-stothers} to \(A^{d-1}+B^{d-1}-H_2=0\) gives \((d-1)M<\deg\operatorname{rad}(ABH_2).\)
Since \(\deg\operatorname{rad}(AB)\leq 2M\), we get
\begin{equation}\label{eq:rad-H2-lower}
\deg\operatorname{rad}(H_2)>(d-3)M.
\end{equation}
In our setting \(d=q-1\geq 6\), hence \eqref{eq:rad-H2-lower} contradicts to \eqref{eq:Wronskian-degree}. Hence such a divisor \(P\) exists. By \eqref{eq:Fm-Hm}, we then have \[v_P(F^{\circ 2}(0))=1,\ v_P(F(0))=0.\] 
Now assume \(n\geq 2\). Let \(S_{n+1}\) be the product of the distinct irreducible divisors of \(H_{n+1}\) which do not divide \(ABH_1H_2\cdots H_n.\) We apply Theorem \ref{thm:mason-stothers} to
\begin{equation}\label{eq:Mason-main-relation}
A^{d-1}H_n^d+B^{d^n-1}-H_{n+1}=0.
\end{equation}
The three terms are pairwise coprime by \eqref{eq:H-coprime-AB} and \eqref{eq:H-coprime-previous}. Moreover, they are not all \(p^{th}\) powers, since \(v_t(A^{d-1}H_n^d)=d-1\) by \eqref{eq:t-not-divide-Hm}, and \(p\nmid d-1\). Therefore \ref{eq:Mason-main-relation} gives \[
\max\{\deg(A^{d-1}H_n^d),\deg(B^{d^n-1}),\deg H_{n+1}\}
<
\deg\operatorname{rad}(ABH_nH_{n+1}).
\]

We claim that \[\max\{\deg(A^{d-1}H_n^d),\deg(B^{d^n-1}),\deg H_{n+1}\} \ge (d^n-1)M.\]
If \(M_B=M\), this follows from $\deg(B^{d^n-1})=(d^n-1)M.$ If \(M_A=M>M_B\), we prove by induction that $\deg H_{m+1}=(d^m-1)M \quad (m\ge 1).$ For \(m=1\), $H_2=A^{d-1}+B^{d-1},$ and since \(\deg A>\deg B\), no cancellation occurs, so $\deg H_2=(d-1)M.$ Assume the claim holds for \(m-1\), namely $\deg H_m=(d^{m-1}-1)M.$ Then
\[\deg(A^{d-1}H_m^d)=(d-1)M+d(d^{m-1}-1)M =(d^m-1)M,\]
whereas $\deg(B^{d^m-1})=(d^m-1)M_B<(d^m-1)M.$ Thus no cancellation occurs in $H_{m+1}=A^{d-1}H_m^d+B^{d^m-1},$ and hence $\deg H_{m+1}=(d^m-1)M.$ Therefore the claim follows in both cases. Hence
\begin{equation}\label{eq:rad-Hnplus1-lower-1}
\deg\operatorname{rad}(H_{n+1})
>
(d^n-1)M-\deg\operatorname{rad}(ABH_n).
\end{equation}
Using \eqref{eq:degree-Hm-upper}, we get
\[
\deg\operatorname{rad}(ABH_n)\leq 2M+\deg H_n
\leq 2M+(d^{n-1}-1)M.
\]
Thus
\begin{equation}\label{eq:rad-Hnplus1-lower-2}
\deg\operatorname{rad}(H_{n+1})
>
\bigl((d-1)d^{n-1}-2\bigr)M.
\end{equation}
Therefore
\(
\deg S_{n+1}
\geq
\deg\operatorname{rad}(H_{n+1})-\deg\operatorname{rad}(ABH_1\cdots H_n).
\)
Using \eqref{eq:degree-Hm-upper}, this gives
\begin{equation}\label{eq:Sn-lower}
\deg S_{n+1}
>
\left(
(d-1)d^{n-1}
-\frac{d^n-1}{d-1}
+n-4
\right)M.
\end{equation}
On the other hand,
\begin{equation}\label{eq:2M-Hn-upper}
2M+\deg H_n\leq (d^{n-1}+1)M.
\end{equation}
But since \(d\geq 6\), the estimate
\[
(d-1)d^{n-1}
-\frac{d^n-1}{d-1}
+n-4
>
d^{n-1}+1
\]
holds for every \(n\geq 2\). Hence \eqref{eq:Sn-lower} and \eqref{eq:2M-Hn-upper} imply
\begin{equation}\label{eq:Sn-large}
\deg S_{n+1}>2M+\deg H_n.
\end{equation}
We now prove that some irreducible divisor of \(S_{n+1}\) appears in \(H_{n+1}\) with multiplicity one. Suppose not. Then every irreducible divisor of \(S_{n+1}\) divides \(H_{n+1}'\), and hence \(S_{n+1}\mid H_{n+1}'.\) Differentiating \eqref{eq:H-recursion}, we obtain \(H_{n+1}'=A^{d-2}H_n^{d-1}\bigl((d-1)A'H_n+dAH_n'\bigr)+(d^n-1)B^{d^n-2}B'.\) Let \(P\mid S_{n+1}\). Then \(P\nmid ABH_n\), and since \(P\mid H_{n+1}\), we have \(A^{d-1}H_n^d\equiv -B^{d^n-1}\pmod P.\)
Using also \(P\mid H_{n+1}'\), we obtain \(P\mid J_n,\) where
\begin{equation}\label{eq:Jn-definition}
J_n=B\bigl((d-1)A'H_n+dAH_n'\bigr)
-
(d^n-1)AH_nB'.
\end{equation}
Since \(S_{n+1}\) is squarefree, this gives
\begin{equation}\label{eq:Sn-divides-Jn}
S_{n+1}\mid J_n.
\end{equation}
We claim that \(J_n\neq 0\). If \(J_n=0\), then
\(B(A^{d-1}H_n^d)'-(d^n-1)A^{d-1}H_n^dB'=0.\)
Equivalently,
\[\left(\frac{A^{d-1}H_n^d}{B^{d^n-1}}\right)'=0.
\]
Thus \(A^{d-1}H_n^d/B^{d^n-1}\in \mathbb{F}_q(t^p)\). But by \eqref{eq:AB-t-conditions} and \eqref{eq:t-not-divide-Hm},
\[
v_t\left(\frac{A^{d-1}H_n^d}{B^{d^n-1}}\right)=d-1,
\]
which is impossible because every valuation of an element of \(\mathbb{F}_q(t^p)\) is divisible by \(p\), whereas \(p\nmid d-1\). Hence \(J_n\neq 0\).\\
Moreover, from \eqref{eq:Jn-definition},
\begin{equation}\label{eq:Jn-degree}
\deg J_n\leq \deg A+\deg B+\deg H_n-1\leq 2M+\deg H_n-1.
\end{equation}
Now \eqref{eq:Sn-large}, \eqref{eq:Sn-divides-Jn}, and \eqref{eq:Jn-degree} give a contradiction. Therefore there exists an irreducible divisor \(P\mid S_{n+1}\) such that \(v_P(H_{n+1})=1.\) Since \(P\nmid ABH_1\cdots H_n\), equation \eqref{eq:Fm-Hm} gives \(v_P(F^{\circ (n+1)}(0))=1,\textrm{ and } v_P(F^{\m}(0))=0 \quad \textrm{ for }1\leq m\leq n.\)
\end{proof}

\subsection{The Full Cyclic Wreath Product}\label{subsec:full_wreath_product}

\begin{thm}
\label{thm:conjugate-full-wreath}
Let \(q=p^e\ge 7\) be an odd prime power,  $K=\mathbb F_q(t)$, \(d=q-1\), and let
\[
f_c(x)=cx^d+t\in K[x].
\]
Assume that
\(c\in (K^\times)^{d-1},
\textrm{ and }
v_t(c)=0.\)
Choose \(u\in K^\times\) such that \(u^{d-1}=c^{-1}\), and set
\[
F(x)=u^{-1}f_c(ux)=x^d+C,
\quad \textrm{ with }
C=\frac{t}{u}.
\]
Denote 
\(
K_n=\operatorname{Spl}_K(F^{\circ n})=\operatorname{Spl}_K(f_c^{\n})
\quad\text{and}\quad
\mathcal{G}_n=\operatorname{Gal}(K_n/K).
\)
Then for every \(n\ge1\), we have
\[
\mathcal{G}_n\cong
\underbrace{C_d\wr C_d\wr\cdots\wr C_d}_{n\text{ factors}}.
\]
Equivalently, the arboreal Galois representation attached to \(F\), and hence
also to \(f_c\), has full cyclic wreath-product image at every level.
\end{thm}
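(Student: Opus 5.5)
The plan is to argue by induction on \(n\), in the same way as the proof of Theorem~\ref{thm:conditional-wreath}. We work throughout with \(F(x)=x^d+C\) in place of \(f_c\). This is justified because, as noted at the start of Section~\ref{Sec.Irreducible Conjugate}, \(f_c^{\circ n}(x)=uF^{\circ n}(u^{-1}x)\), so the two polynomials have the same splitting fields \(K_n\) and hence the same groups \(\mathcal G_n\).

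\emph{Base case \(n=1\).} The equation \(F(x)=0\) reads \(x^d=-C\). Since \(v_t(-C)=1\), the class \([-C]\) has order exactly \(d\) in \(K^\times/(K^\times)^d\). Theorem~\ref{Kummer-Theory} then gives \(\mathcal G_1\cong C_d=W_1\).

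\emph{Inductive step, \(n\ge2\).} Assume \(\mathcal G_{n-1}\cong W_{n-1}\). We do not actually need this hypothesis for transitivity: Theorem~\ref{thm:F-iterates-irreducible} already shows that \(F^{\circ(n-1)}\) is irreducible over \(K\), so \(\mathcal G_{n-1}\) acts transitively on \(R_{n-1}\). Separability of \(F^{\circ m}\) for all \(m\le n\) is Lemma~\ref{lem:critical-orbit}.

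Next we invoke Proposition~\ref{lem:primitive_place_F} with index \(n-1\ge1\). It produces a finite place \(P\) with
\[
v_P\bigl(F^{\circ n}(0)\bigr)=1,\qquad v_P\bigl(F^{\circ m}(0)\bigr)=0\quad(1\le m<n).
\]
Proposition~\ref{prop:general-kummer-independence} is applied with \(A=1\) and \(B=C\). For this we also need \(v_P(A)=v_P(B)=0\). The condition \(v_P(1)=0\) is trivial. For \(B\), note that \(C=F(0)=a_1\) and \(n\ge2\), so \(v_P(C)=v_P(a_1)=0\) is one of the conditions already supplied.

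It is worth checking why \(P\) is a finite place distinct from \(t\). In the proof of Proposition~\ref{lem:primitive_place_F}, \(P\) divides \(H_n\) but not \(AB\), and \(t\nmid H_m\). So \(P\) is a genuine finite prime, and the Hensel and unramifiedness arguments in Proposition~\ref{prop:general-kummer-independence} go through.

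Proposition~\ref{prop:general-kummer-independence} then yields \(\operatorname{Gal}(K_n/K_{n-1})\cong (C_d)^{R_{n-1}}\).

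\emph{Assembling \(\mathcal G_n\).} Theorem~\ref{kkummer-thm-used} embeds \(\mathcal G_n\) into \((C_d)^{R_{n-1}}\rtimes\mathcal G_{n-1}\). The restriction map \(\mathcal G_n\to\mathcal G_{n-1}\) is surjective with kernel \(\operatorname{Gal}(K_n/K_{n-1})\). Counting orders gives
\[
|\mathcal G_n|=d^{|R_{n-1}|}\,|\mathcal G_{n-1}|=\bigl|(C_d)^{R_{n-1}}\rtimes\mathcal G_{n-1}\bigr|,
\]
so the embedding is an isomorphism. Hence \(\mathcal G_n\cong C_d\wr\mathcal G_{n-1}\cong W_n\) by the inductive hypothesis.

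\emph{Expected difficulty.} The substantive input is the primitive simple divisor result, Proposition~\ref{lem:primitive_place_F}, and that is already proved. The remaining care lies in two places: matching its index shift (\(n+1\) there versus \(n\) here), and confirming the side condition \(v_P(B)=0\) in the Kummer criterion. That condition holds only because \(n\ge2\), which makes \(v_P(C)=0\) part of the primitivity statement.
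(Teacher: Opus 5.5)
Your proposal is correct and follows the paper's proof essentially step for step. Like the paper, you argue by induction: the base case comes from $v_t(-C)=1$ and Kummer theory, and the inductive step applies Proposition~\ref{lem:primitive_place_F} at index $n-1$ and feeds it into Proposition~\ref{prop:general-kummer-independence} with $A=1$, $B=C$, where $v_P(C)=v_P(F(0))=0$ holds because $n\ge 2$. The only differences are minor. You obtain transitivity of $\mathcal G_{n-1}$ from the irreducibility in Theorem~\ref{thm:F-iterates-irreducible} rather than from the inductive hypothesis. You also justify the final step $\mathcal G_n\cong (C_d)^{R_{n-1}}\rtimes\mathcal G_{n-1}$ via the embedding of Theorem~\ref{kkummer-thm-used} plus an order count, which the paper simply asserts.
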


\begin{proof}
We proceed by induction on \(n\). For \(n=1\), Theorem \ref{thm:F-iterates-irreducible} implies that \(F(x)=x^{d}+C\) is irreducible over \(K\). Since \(\mu_d\subset K\), Theorem \ref{Kummer-Theory} yields \( \mathcal G_1 \cong C_d.\)  In particular, \(\mathcal G_1\) acts transitively on \(R_1\). Assume now that \(n\ge 2\) and that
\[\mathcal G_{n-1}\cong \underbrace{C_d\wr \cdots \wr C_d}_{n-1\text{ factors}}.\] Then \(\mathcal G_{n-1}\) acts transitively on \(R_{n-1}\). By Proposition \ref{lem:primitive_place_F} with \(n-1\), there exists a finite place \(P\) of \(K\) such that
\[
v_P(F^{\n}(0))=1,
\textrm{ and }
v_P(F^{\m}(0))=0
\quad \textrm{ for }1\le m<n.
\]
In particular, $v_P(C)=0$. Since \(v_P(F^{\circ n}(0))=1\), we also have \(\gcd\!\bigl(v_P(F^{\circ n}(0)),d\bigr)=1.\) Therefore all hypotheses of Proposition \ref{prop:general-kummer-independence}
are satisfied with \(A=1\) and \(B=C\). Hence the Kummer classes
\([\beta-C],\ \beta\in R_{n-1},\) are linearly independent in \(K_{n-1}^{\times}/(K_{n-1}^{\times})^{d}.\) Proposition \ref{prop:general-kummer-independence} therefore gives
\[
\operatorname{Gal}(K_n/K_{n-1})
\cong
\prod_{\beta\in R_{n-1}} C_d .
\]
Since \( \mathcal G_{n-1}\) acts on \(R_{n-1}\), the natural action on the preimage tree yields
\[
\mathcal G_n
\cong
\left(
\prod_{\beta\in R_{n-1}} C_d
\right)
\rtimes \mathcal G_{n-1}
=
C_d\wr \mathcal G_{n-1}.
\]
Using the induction hypothesis, we conclude that \(\mathcal G_n \cong\underbrace{C_d\wr C_d\wr\cdots\wr C_d}_{n\text{ factors}}.
\)
\end{proof}

\section{Comparing Arboreal and Drinfeld Galois Representations}

The examples in this section illustrate that the relationship between the arboreal Galois representations associated with \(
f_c(x)=cx^{d}+t\) and the adelic Galois representations attached to the corresponding twisted Carlitz modules is more subtle than one might expect. Although the identity \(\rho_t^c(X)=Xf(X)\)
connects the two theories at the first level, their higher towers encode different arithmetic information. In particular, surjectivity of the adelic Galois representation in the sense of Gekeler does not imply maximality of the associated arboreal Galois representation. Conversely, maximal arboreal Galois image does not force surjectivity of the Adelic representation. The adelic surjectivity of $\rho_t^c(x)$ and arboreal maximality of $f_c(x)$ are independent, except for a one-way local implication at t.

\subsection{An Adelic-Surjective but Arboreally Nonmaximal Example}

In this subsection, we construct a twisted Carlitz module whose adelic Galois image is surjective from  Gekeler's defect formula, while the corresponding polynomial fails the primitive simple divisor property and consequently does not have full arboreal Galois image.
 
Although the example lies outside the hypotheses of our maximality theorems, it
illustrates that adelic surjectivity of the associated twisted Carlitz module does not, by
itself, force maximality of the arboreal Galois representation.

\begin{prop}\label{adelsurj}
Let $K=\mathbb{F}_3(t)$, $c=\frac{t^2+t+2}{t(t+1)^2}$, and consider the twisted Carlitz module $\rho_t^c(x)=tx+cx^3$. Then the adelic Galois representation attached to $\rho_t^c$ is surjective.
\end{prop}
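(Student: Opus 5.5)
The plan is to reduce to Gekeler's defect formula, Corollary~\ref{cor-gek}. That formula is stated for a $d$-power-free polynomial $\Delta\in\mathcal R$, so the first step is to replace $c$ by such a representative. Here $q=3$ and $d=q-1=2$. If $c'=cu^{d}$ with $u\in K^\times$, then the substitution $x\mapsto u x$ gives
\[
u^{-1}\rho_t^{c}(ux)=tx+cu^{q-1}x^q=\rho_t^{c'}(x).
\]
So $\rho_t^{c}$ and $\rho_t^{c'}$ are isomorphic over $K$. Their torsion fields, and hence their adelic Galois images, therefore agree. It thus suffices to compute the defect for the square-free representative of $[c]\in K^\times/(K^\times)^2$.

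Multiplying $c$ by the square $t^2(t+1)^2$ gives
\[
c\cdot t^2(t+1)^2=t\,(t^2+t+2),
\]
so we take $\Delta=t\,(t^2+t+2)$. Next we check that $t^2+t+2$ is irreducible over $\mathbb F_3$. Being a quadratic, it suffices to see it has no roots: its values at $t=0,1,2$ are $2,1,2$, none of which is zero. Hence $\Delta=c^{0}Q_1Q_2$ with $Q_1=t$ and $Q_2=t^2+t+2$ distinct monic irreducibles. This gives $k_0=0$, $k_1=k_2=1$, all in the allowed ranges since $q-1=2$, and
\[
D=\deg\Delta=1+2=3.
\]

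Since $q$ and $D$ are both odd, the twisted constant is
\[
k_0^*=k\equiv k_0+\tfrac{q-1}{2}=1 \pmod 2,
\]
so $k_0^*=1$. Corollary~\ref{cor-gek} then gives
\[
\operatorname{def}(\rho^{(\Delta)})=\gcd(D-1,\,q-1,\,k_0^*)=\gcd(2,2,1)=1.
\]
A defect of $1$ means the adelic image is surjective, which proves the statement.

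The only point needing care is the first step: justifying that passing from $c$ to $\Delta$ by a $d$-th power twist does not change the adelic image. This is the isomorphism $x\mapsto ux$ above, and it works precisely because the twisting factor enters only through $u^{q-1}=u^{d}$. The remaining steps are direct bookkeeping with Gekeler's normalization, in particular the parity rule for $k_0^*$.
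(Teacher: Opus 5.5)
Your proposal is correct and follows the same route as the paper: reduce $c$ modulo squares to $\Delta=t(t^2+t+2)$, read off $D=3$, $k_0=0$ and $k_0^*=1$, and apply Corollary~\ref{cor-gek} to get defect $\gcd(2,2,1)=1$. Your justification of twist-invariance via $x\mapsto ux$ and your check that $t^2+t+2$ is irreducible over $\mathbb{F}_3$ make explicit two points the paper asserts without proof.
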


\begin{proof}
For $q=3$, the isomorphism class of a twisted Carlitz module depends only on the class of $c$ modulo squares in $K^\times$. Multiplying $c$ by the square $t^2(t+1)^2$, we obtain \
\(\Delta=t^2(t+1)^2c=t(t^2+t+2).
\) Hence \(\deg(\Delta)=3.\) The leading coefficient of $\Delta$ is $1$, so \(k_0=0.\)
Since $q=3$ and $\deg(\Delta)=3$ are both odd, \(k_0^\ast\equiv k_0+\frac{q-1}{2}\equiv0+1\equiv1\pmod 2.\) Thus \(k_0^\ast=1.\)
 Therefore by Corollary \ref{cor-gek}, we get \(\operatorname{def}(\Delta)=\gcd(\deg(\Delta)-1,q-1,k_0^\ast)=\gcd(2,2,1)=1.\) Hence the adelic Galois representation attached to $\rho$ is surjective.
\end{proof}

\begin{prop}
Let $f(X)=cX^2+t$, $c=\frac{t^2+t+2}{t(t+1)^2}$. Then the critical orbit $a_n=f_c^{\n}(0)$ does not satisfy the primitive simple divisor property.
\end{prop}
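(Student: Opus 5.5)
The plan is to exhibit an explicit index $n$ at which the primitive simple divisor property fails, namely $n=2$. No Mason--Stothers argument is needed; a direct computation in $\mathbb F_3(t)$ suffices. Recall that $a_n$ has a primitive simple divisor if there is a finite place $P$ with $v_P(a_n)=1$ and $v_P(a_m)=0$ for $1\le m<n$, as in Theorem~\ref{thm:PSD} and Proposition~\ref{lem:primitive_place_F}. It therefore suffices to show that no finite place $P$ satisfies $v_P(a_2)=1$ and $v_P(a_1)=0$.

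First, $a_1=f(0)=t$. Next,
\[
a_2=c\,a_1^2+t=t\,(ct+1)=t\cdot\frac{(t^2+t+2)+(t+1)^2}{(t+1)^2}=t\cdot\frac{2t^2+3t+3}{(t+1)^2}.
\]
Reducing the coefficients modulo $3$, the numerator $2t^2+3t+3$ becomes $2t^2$, so
\[
a_2=\frac{2t^3}{(t+1)^2}\in\mathbb F_3(t).
\]
Hence the only finite place with $v_P(a_2)>0$ is $P=t$, where $v_t(a_2)=3\neq 1$. Moreover, $t$ already divides $a_1$, so it is not primitive. The place $t+1$ has negative valuation and all other finite places have valuation $0$. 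Thus no finite place $P$ satisfies $v_P(a_2)=1$ and $v_P(a_1)=0$, and $a_2$ has no primitive simple divisor. This proves the proposition.

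The only delicate point is the characteristic-$3$ cancellation $3t+3\equiv 0$, which collapses the numerator of $ct+1$ to a pure power of $t$; I would double-check this arithmetic carefully. I would also remark that this failure is precisely why Proposition~\ref{prop:general-kummer-independence} cannot be applied at level $2$. That observation motivates the subsequent non-maximality statement (Proposition~\ref{nonmax}), whose proof would study the Kummer classes $[(\beta-t)/c]$, $\beta\in R_1$, directly.
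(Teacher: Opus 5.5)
Your proof is correct and follows the paper's argument: you compute $a_2=t(ct+1)=-t^3/(t+1)^2$ in $\mathbb F_3(t)$ using the characteristic-$3$ cancellation, then observe that the only finite place where $a_2$ has positive valuation is $t$, which already divides $a_1=t$. Your extra observation that $v_t(a_2)=3\neq 1$ is harmless but not needed, since $t$ already fails to be primitive.
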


\begin{proof}
We have \(a_1=f(0)=t.\) Next, \(a_2=f(t)=ct^2+t.\) Substituting the value of $c$, we obtain
\[a_2=\frac{t(t^2+t+2)}{(t+1)^2}+t =\frac{t(t^2+t+2)+t(t+1)^2}{(t+1)^2}.\]
Since $(t+1)^2=t^2+2t+1$ in $\mathbb F_3[t]$, the numerator becomes
\[t\bigl((t^2+t+2)+(t^2+2t+1)\bigr)=t(2t^2+3t+3)=2t^3=-t^3.\]
Hence \(a_2=-\frac{t^3}{(t+1)^2}.\) The only finite prime with positive valuation at $a_2$ is $t$, which already divides $a_1=t$. Thus $a_2$ has no primitive divisor. In particular, $a_2$ has no primitive simple divisor. Therefore the primitive simple divisor property fails.
\end{proof}

\begin{prop}\label{nonmax}
 The second-level arboreal Galois group is strictly smaller than the full wreath product:
\(\mathcal{G}_2 \not\cong C_2 \wr C_2.\)
\end{prop}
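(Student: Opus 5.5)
Here $q=3$, $d=2$, $f(X)=cX^2+t$, and $C_2\wr C_2$ has order $8$. The plan is to show $|\mathcal G_2|\le 4$ by exhibiting one quadratic Kummer relation among the level-two radicands.

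First, level one. $f(x)=0$ is equivalent to $x^2=-t/c=-t^2(t+1)^2/(t^2+t+2)$. Its square class is that of $-(t^2+t+2)$. The polynomial $t^2+t+2$ has no root in $\mathbb F_3$ (its values at $0,1,2$ are $2,1,2$), so it is irreducible and not a square. Hence $K_1=K(\sqrt{-(t^2+t+2)})$ has degree $2$, and $R_1=\{\pm\beta\}$ with $\beta^2=-t/c$.

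Second, level two. By the embedding of Theorem~\ref{kkummer-thm-used}, $K_2$ is obtained from $K_1$ by adjoining square roots of
\[
u_\pm=\frac{\pm\beta-t}{c}.
\]
By Theorem~\ref{Kummer-Theory}, $[K_2:K_1]$ is the order of the subgroup of $K_1^\times/(K_1^\times)^2$ generated by $[u_+]$ and $[u_-]$. Their product is
\[
u_+u_-=\frac{t^2-\beta^2}{c^2}=\frac{t^2+t/c}{c^2}=\frac{t(ct+1)}{c^3}.
\]
This is, up to squares, $a_2/c^3$, using $a_2=ct^2+t=t(ct+1)$. Up to squares in $K$ it equals $a_2c$. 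The previous proposition gives $a_2=-t^3/(t+1)^2$, so modulo squares $a_2\equiv -t$. Therefore
\[
u_+u_-\equiv -tc\equiv -\frac{t^2+t+2}{(t+1)^2}\equiv -(t^2+t+2) \pmod{(K^\times)^2}.
\]
This is exactly the level-one radicand, which is a square in $K_1$. So $[u_+][u_-]$ is trivial in $K_1^\times/(K_1^\times)^2$, the subgroup has order at most $2$, and $[K_2:K_1]\le 2$.

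Conclusion: $|\mathcal G_2|\le 4<8=|C_2\wr C_2|$, so $\mathcal G_2\not\cong C_2\wr C_2$. The only step needing care is the square-class bookkeeping for $u_+u_-$, in particular tracking the sign $-1$. As an independent check, the discriminant of $f^{\circ 2}$ is, up to squares, $a_1a_2$. A level-two group of order $8$ requires both $K(\sqrt{a_1a_2})$ and $K(\sqrt{-(t^2+t+2)})$ to be nontrivial, with the former of the form $K(\sqrt{\cdot})$ different from $K_1$. Here $a_1a_2\equiv -t^4\equiv -1$, which is not a square in $\mathbb F_3$. So $\sqrt{a_1a_2}$ generates the constant extension $\mathbb F_9(t)$. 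I expect this check to be consistent with the Kummer computation above, but it is only a sanity check; the $u_+u_-$ argument is the proof.
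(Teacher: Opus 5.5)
Your argument is correct and is essentially the paper's. Both show that the product $u_+u_-=a_2/c^3$ of the two level-two radicands is a square in $K_1$, so $[K_2:K_1]\le 2$ and $|\mathcal G_2|\le 4$; the paper does this via the explicit identity $a_2/c^3=\bigl(\beta^3/(t+1)\bigr)^2$, while you match square classes in $K$ against the level-one radicand.

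The only flaw is in your unused sanity check, which has the wrong discriminant class. From the roots $\pm\sqrt{u_\pm}$ one gets $\operatorname{disc}(f^{\circ 2})\equiv u_+u_-\equiv a_2c\equiv -(t^2+t+2)$ modulo $(K^\times)^2$, not $a_1a_2\equiv -1$. Hence $K(\sqrt{\operatorname{disc}})=K_1$, and that coincidence is what actually corroborates non-maximality. With your value, $\mathbb F_9(t)\neq K_1$, so the check as written would detect nothing and would not be consistent with your main argument.
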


\begin{proof}
Let $\beta$ be a root of the first-level equation $f(x)=0$. Then \(\beta^2=-\frac{t}{c}.\) The product of the two second‑level Kummer classes is represented by $\frac{a_2}{c^3}$. Using the formula \(a_2=-\frac{t^3}{(t+1)^2},\)
we obtain
\[\frac{a_2}{c^3}
   = -\frac{t^3}{(t+1)^2c^3}
   = \frac{1}{(t+1)^2}\left(-\frac{t}{c}\right)^3
   = \frac{\beta^6}{(t+1)^2}=\left(\frac{\beta^3}{t+1}\right)^2,\]
which is a square in $K_1=K(\beta)$. Therefore the two second-level Kummer classes satisfy a nontrivial relation in
$K_1^\times/(K_1^\times)^2.$ Hence the subgroup generated by these classes is not isomorphic to \((\mathbb Z/2\mathbb Z )^2.\) By Theorem~\ref{Kummer-Theory},
\( \Gal(K_2/K_1)\simeq \
\textrm{Hom}(\Gamma_2,\mu_2),
\)
where \(\Gamma_2\) is the subgroup generated by the two
second-level Kummer classes. Since \(\Gamma_2\) is not isomorphic to \((\mathbb Z/2\mathbb Z)^2\), we obtain
\(|\Gal(K_2/K_1)|\le 2,\) and consequently $|\mathcal{G}_2|<8$. Since $|C_2\wr C_2|=8$, it follows that $\mathcal G_2\not\cong C_2\wr C_2$.
\end{proof}

\subsection{An Arboreally Maximal Example with Non-surjective Adelic Image}

\begin{prop}\label{arbmaxnonsurj}
Let  \(K = \mathbb{F}_{7}(t),\ d = 6,\ u = t + 1,\)
and set  \(c = u^{-(d - 1)} = (t + 1)^{-5}.\) Consider the polynomial  \(f_c(x) = cx^{d} + t = (t + 1)^{-5}x^{6} + t\)
and the associated twisted Carlitz module  \(\rho_{t}^c(x) = tx + cx^{7}.\) Then the  arboreal Galois image of \(f_c\) is maximal, whereas the full adelic Galois representation of the twisted Carlitz module \(\rho_t^c\) is not surjective.
\end{prop}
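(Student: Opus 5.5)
The statement has two halves: arboreal maximality and adelic non-surjectivity. I will treat them separately.

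\emph{Arboreal maximality.} This will follow from Theorem~\ref{thm:conjugate-full-wreath}. Here $q=7\ge 7$ is an odd prime power and $d=6$. We have
\[
c=(t+1)^{-5}=\bigl((t+1)^{-1}\bigr)^{5}=\bigl((t+1)^{-1}\bigr)^{d-1}\in (K^\times)^{d-1},
\]
and $v_t(c)=0$ because $t\nmid t+1$. The element $u$ with $u^{d-1}=c^{-1}$ may be taken to be $u=t+1$. This gives $F(x)=x^6+C$ with $C=t/(t+1)$.

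Since all hypotheses are met, Theorem~\ref{thm:conjugate-full-wreath} yields $\mathcal G_n\cong C_6\wr\cdots\wr C_6$ for every $n$. As a sanity check, $v_t(C)=1$, which is exactly what drives Lemma~\ref{lem:critical-orbit}. The PSD input from Proposition~\ref{lem:primitive_place_F} is already proven for all $d\ge 6$, so nothing further is needed.

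\emph{Adelic non-surjectivity.} I will apply Corollary~\ref{cor-gek} to the $d$-power-free representative $\Delta$ of $c$ in $K^\times/(K^\times)^6$. Multiplying $c$ by the sixth power $(t+1)^6$ gives $\Delta=t+1$. This is monic, so $k_0=0$. It has $D=\deg\Delta=1$, with a single irreducible factor $Q_1=t+1$ and $k_1=1$.

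Because $q=7$ is odd but $D=1$ is odd as well, we must compute $k_0^*$ via the shift: $k_0^*\equiv 0+3\pmod 6$, so $k_0^*=3$. Corollary~\ref{cor-gek} then gives
\[
\operatorname{def}(\rho^{(\Delta)})=\gcd(D-1,\,q-1,\,k_0^*)=\gcd(0,6,3)=3\neq 1.
\]
The adelic image therefore has index $3$ and is not surjective.

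One point needs a sentence of justification. The isomorphism class of $\rho^c_t$ over $K$ depends only on $[c]\in K^\times/(K^\times)^{q-1}$, since conjugating by $\lambda\in K^\times$ replaces $c$ by $\lambda^{q-1}c$. Here I take $\lambda=t+1$. The same observation was used for $q=3$ in Proposition~\ref{adelsurj}, so $\rho^c_t\cong\rho^{\Delta}_t$ and the defect computation applies to $\rho^c_t$.

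\emph{Expected obstacle.} The main risk is the Gekeler normalization. First, the convention for $k_0^*$ must be applied correctly: the shift by $(q-1)/2$ is used precisely because $q$ and $D$ are both odd. Second, $\gcd(0,6,3)$ must be read as $3$. Third, the decomposition $\Delta=c^{k_0}Q_1^{k_1}$ requires that a monic $\Delta$ has $k_0=0$. As a cross-check, Theorem~\ref{Gekeler} at $N=t$ also gives $\gcd(0,6,3,k_1=1)=1$. This agrees with Theorem~\ref{arboreal-max}'s local implication, since $t+1\nmid t$. So the failure of surjectivity occurs at other levels $N$ divisible by $t+1$, and is global rather than $t$-adic.
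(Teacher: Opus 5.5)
Your proposal is correct and follows the paper's proof essentially step for step. You verify the hypotheses of Theorem~\ref{thm:conjugate-full-wreath} with $u=t+1$ and $C=t/(t+1)$, then compute Gekeler's defect for the representative $\Delta=t+1$ as $\gcd(0,6,3)=3$; your added justification of the $(K^\times)^{q-1}$-invariance via conjugation by $\lambda$, and the $N=t$ cross-check, agree with the paper's surrounding remarks.
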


\begin{proof}
First, we prove maximality of the arboreal Galois image. Since  \(c^{-1} = u^{d - 1}\), the polynomial \(f_{c}\) is linearly conjugate over \(K\) to  \(F(x) := u^{-1}f_c(ux).\) Thus 
\(F(x) = x^{6} + \frac{t}{t + 1}.\)  The first‑level equation for \(F\) is  \(x^{6} = -\frac{t}{t + 1}.\)
Since \(v_t\!\left(-\frac{t}{t+1}\right)=1,\) the class \(\left[-\frac{t}{t+1}\right]\in K^\times/(K^\times)^6\)  has order \(6\). Hence the first‑level Kummer extension has Galois group \(C_{6}\), and the first level is transitive. Applying Theorem \ref{thm:conjugate-full-wreath} to \(F(x)\), we obtain that for every \(n\geq 1\),
\[
\operatorname{Gal}(K_{n} / K)\cong \underbrace{C_{6}\wr C_{6}\wr\dots\wr C_{6}}_{n\text{ factors}},
\]
Thus, the arboreal Galois image is maximal.\\
We now show that the associated twisted Carlitz adelic image is not surjective. The $K$-isomorphism class of twisted Carlitz modules depends only on the coefficient \(c\) modulo \((K^{\times})^{d} = (K^{\times})^{6}\). Multiplying \(c\) by the sixth power \((t + 1)^{6}\), we obtain the equivalent polynomial representative  \(
\Delta = (t + 1)^{6}c = t + 1.\) Thus \(\rho_t^c\) is isomorphic over \(K\) to the twisted Carlitz module with coefficient \(\Delta = t + 1\). We compute Gekeler's defect for \(\Delta = t + 1\). Here 
\(\deg \Delta = 1.\) The leading coefficient of \(\Delta\) is \(1\), so if \(g\) is a generator of \(\mathbb{F}_{7}^{\times}\), then \(1 = g^{0}\), and hence \(k_{0} = 0\). Moreover, the finite part of \(\Delta\) consists of the single linear factor \(t + 1\) with exponent \(1\), so \(D = 1\). Since \(q = 7\) and \(D = 1\) are both odd, the constant exponent is 
\[
k_{0}^{*}\equiv k_{0} + \frac{q-1}{2}\equiv 0 + 3\equiv 3\pmod {6}.
\]
By Corollary \ref{cor-gek}, \(\operatorname{def}(\Delta) = \gcd (D -1,\; q - 1,\; k_{0}^*) = \gcd (0,6,3) = 3.\)
In particular, \(\operatorname{def}(\Delta) > 1\). Hence, the full adelic Galois image of the twisted Carlitz module has index \(3\) in the expected adelic group, and so the adelic representation is not surjective. Therefore \(f\) has maximal arboreal Galois image, while the associated twisted Carlitz module has non‑surjective adelic Galois representation.
\end{proof}

\begin{rem}
We further compute the defect on \textit{mod-N} representations. When $N=t^m$ for $m\geq 1$, we have by Theorem \ref{Gekeler} that
\[
\operatorname{def}(\Delta,t^m)
=
\operatorname{def}(\Delta,t)
=
\gcd(\deg\Delta-1,\ q-1,\ k_0^\ast,\ k_1)
=
\gcd(0,6,3,1)
=
1.
\]
Hence the \(t\)-adic representation is surjective. On the other hand, for the \((t+1)\)-power levels, the prime divisor
\(P_1=t+1\) divides \((t+1)^m\). Hence the exponent \(k_1=1\) is omitted from
Gekeler's finite-level defect formula. Thus
\[
\operatorname{def}(\Delta,(t+1)^m)
=
\gcd(\deg\Delta-1,\ q-1,\ k_0^\ast)
=
\gcd(0,6,3)
=
3.
\]
Therefore the nontrivial adelic defect occurs at \((t+1)\)-adic representation of $tx+(t+1)^{-5}x^7.$ 
\end{rem}

The above observation inspires us to study the relation between arboreal maximality of $f_c(x)=cx^{q-1}+t$ and $t$-adic surjectivity of $\rho_t^c(x)=tx+cx^q$.

\begin{thm}
\label{arboreal-max}
Let $q=p^e$ be an odd prime power and $d=q-1$.
Let \(\rho_t^c(x)=tx+cx^q\) be the twisted Carlitz module corresponding to $f_c(x)=cx^{q-1}+t$. Suppose that the first arboreal level
of \(f_c\) is maximal, i.e.
\[
\operatorname{Gal}(K(f_c^{-1}(0))/K)\cong C_d.
\]
Then the \(t\)-adic Galois representation attached to \(\rho_t^c\) is surjective, i.e, \(\operatorname{def}(\Delta,t^m)=1 \ \text{for every}\ m\ge1,\) where \(\Delta\in \mathbb F_q[t] \) is the \(d\)-power-free representative of the class of \(c\) in
\(K^\times/(K^\times)^d\). In other words, arboreal maximality of \(f_c\) implies \(t\)-adic surjectivity of the associated twisted Carlitz module $\rho_t^c$.
\end{thm}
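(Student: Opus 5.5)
The plan is to translate both conditions into statements about exponent vectors in the group $K^\times/(K^\times)^d$ and compare them directly. Since $\mu_d=\mathbb F_q^\times$ is cyclic of order $d$ and $\mathcal R$ is a UFD, we have
\[
K^\times/(K^\times)^d\;\cong\;\mathbb Z/d\mathbb Z\ \oplus\ \bigoplus_{Q\ \text{monic irreducible}}\mathbb Z/d\mathbb Z,
\]
where the first coordinate is the exponent of the constant with respect to the fixed generator $c_0$ of $\mathbb F_q^\times$ (the primitive root used in Gekeler's notation). The order of a class is $d/\gcd(d,\text{all coordinates})$. I will write the $d$-power-free representative as $\Delta=c_0^{k_0}t^{k_t}\prod_{Q_i\neq t}Q_i^{k_i}$, allowing $k_t=0$ if $t\nmid\Delta$. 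Then $D=k_t+\sum_{Q_i\ne t}k_i\deg Q_i$, because $\deg t=1$.

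\textbf{Step 1: the arboreal hypothesis.} The first level is $x^d=-t/c$, and $[c]=[\Delta]$. Using $-1=c_0^{d/2}$, the class $[-t/c]$ has coordinates
\[
\bigl(d/2-k_0,\ 1-k_t,\ -k_i\ (Q_i\ne t)\bigr).
\]
By Theorem~\ref{Kummer-Theory}, the hypothesis $\mathcal G_1\cong C_d$ says this class has order $d$, which is equivalent to
\[
\gcd\bigl(d,\ d/2-k_0,\ 1-k_t,\ k_i\ (Q_i\ne t)\bigr)=1 .
\]

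\textbf{Step 2: the $t$-adic defect.} For $N=t^m$ the only prime of $\Delta$ dividing $N$ is $t$ (if it occurs at all). So Theorem~\ref{Gekeler} gives
\[
\operatorname{def}(\Delta,t^m)=\gcd\bigl(D-1,\ d,\ k_0^*,\ k_i\ (Q_i\ne t)\bigr),
\]
which is independent of $m$. Let $g$ be any common divisor of these quantities. I will show that $g$ divides every entry of the gcd in Step 1, which forces $g=1$.

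First, $g\mid d$ and $g\mid k_i$ for all $Q_i\ne t$. Hence $D\equiv k_t\pmod g$, and $g\mid D-1$ gives $g\mid 1-k_t$. It remains to show that $g$ divides the constant coordinate $d/2-k_0$; this needs a case split according to the definition of $k_0^*$ (recall $q$ is odd).
\begin{itemize}
\item If $D$ is odd, then $k_0^*\equiv k_0+d/2\pmod d$. Since $d/2\equiv -d/2\pmod d$, we get $d/2-k_0\equiv -(k_0+d/2)\equiv -k_0^*\pmod d$, and therefore $g\mid d/2-k_0$.
\item If $D$ is even, then $k_0^*=k_0$, so $g\mid k_0$. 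Since $g\mid D-1$, which is odd, $g$ is odd. An odd divisor of $d=2\cdot(d/2)$ divides $d/2$. Hence $g\mid d/2-k_0$.
\end{itemize}
In both cases $g$ divides all the quantities in Step 1, so $g=1$ and $\operatorname{def}(\Delta,t^m)=1$ for every $m\ge1$.

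\textbf{Main obstacle.} The only delicate point is bookkeeping for the constant coordinate. One has to check that Gekeler's twist $k_0^*$, with its shift by $(q-1)/2$, matches exactly the sign $-1=c_0^{d/2}$ coming from the equation $x^d=-t/c$; this is where the parity split on $D$ enters. A secondary check is that the normalization of $\Delta$ (leading coefficient $c_0^{k_0}$, and the exponent of $t$ possibly $0$) is consistent with Gekeler's conventions, so that $t$ is correctly the unique prime omitted in $\operatorname{def}(\Delta,t^m)$.
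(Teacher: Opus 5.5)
Your proposal is correct and follows essentially the same route as the paper. Both arguments reduce $\operatorname{def}(\Delta,t^m)$ to the level-$t$ formula and then show that any common divisor of $D-1$, $d$, $k_0^*$ and the $k_i$ ($Q_i\neq t$) divides every exponent of $-t/\Delta$ in $K^\times/(K^\times)^d$, which contradicts the class having order $d$.

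Your version differs only in two details, and both are improvements. First, your parity split for the constant coordinate, using $-1=c_0^{d/2}$ and the fact that $g$ is odd when $D$ is even, supplies a step the paper merely asserts; the paper just says that $\ell\mid k_0^*$ makes the constant factor an $\ell$-th power. Second, allowing $k_t=0$ absorbs the paper's separate case $t\nmid\Delta$ into the single congruence $g\mid 1-k_t$.
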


\begin{proof}
The first arboreal level is given by
\(f_c(x)=0
\quad\Longleftrightarrow\quad
x^d=-\frac{t}{c}.\)
Since \(\mu_d\subset K,\)
Theorem \ref{Kummer-Theory} shows that
\[
\operatorname{Gal}(K(f_c^{-1}(0))/K)\cong C_d \textrm{
if and only if the class}
\left[-\frac{t}{c}\right]\in K^\times/(K^\times)^d 
\textrm{ has order \(d\).} \] Replacing \(c\) by its \(d\)-power-free representative \(\Delta\in \mathcal R\), this is equivalent to saying that \(\left[-\frac{t}{\Delta}\right]\) has order \(d\) in \(K^\times/(K^\times)^d\). \\We now show that this forces the \(t\)-adic defect to be equal to $1$. Write
\[\Delta=\gamma^{k_0}t^{k_t}\prod_{j=1}^s Q_j^{k_j},\]
where \(\gamma\) is a generator of \(\mathbb F_q^\times\), the \(Q_j\neq t\) are distinct monic irreducible polynomials in \(\mathcal R\), and \(0\le k_0,k_t,k_j<d.\) By Theorem \ref{Gekeler}, the defect at level \(t^m\) depends only on the prime divisors of \(t^m\). Hence \(\operatorname{def}(\Delta,t^m)=\operatorname{def}(\Delta,t) \ \textrm{ for $m\ge1$}.\)\\
It is therefore enough to prove that \(\operatorname{def}(\Delta,t)=1.\) Suppose, toward a contradiction, that \(\operatorname{def}(\Delta,t)>1.\) Let \(\ell\) be a prime divisor of \(\operatorname{def}(\Delta,t)\), then \(\ell\mid d\). We firstly suppose that \(t\nmid\Delta\). Then all finite prime factors of \(\Delta\) are coprime to \(t\), so Theorem \ref{Gekeler} gives \(\operatorname{def}(\Delta,t)=\gcd(\deg\Delta-1,d,k_0^\ast,k_1,\ldots,k_s).\) Thus \(\ell\mid k_j\) for every \(j\). Hence \(\deg\Delta\) is divisible by \(\ell\). But the assumption $\ell \mid \operatorname{def}(\Delta,t)$ also gives \(\ell\mid \deg\Delta-1,\) which is impossible. Therefore \(t\mid\Delta\), i.e. $k_t$ is nonzero. \\
Now from the properties \(\ell\mid k_j \text{ for every }1\leq j\leq s, \ \ell\mid \deg\Delta-1,\) and
\[\deg\Delta=k_t+\sum_{j=1}^s k_j\deg Q_j,\]
we obtain \(k_t\equiv1\pmod\ell.\)
Since $\ell\mid k_t-1$ and $\ell\mid k_j$ for all $j$, for any finite place $P$ of $K$, the valuation $v_P$ of \(-\frac{t}{\Delta}\) is divisible by \(\ell\).
The condition \(\ell\mid k_0^\ast\) is precisely the remaining constant-factor condition in Gekeler's defect formula, see section 2.5.2 for definition of $k_0^*$. It ensures that the remaining constant factor of $-\frac{t}{\Delta}$ in \(\mathbb F_q^\times\) is also an \(\ell^{th}\) power.
Therefore we have \(-\frac{t}{\Delta}\in (K^\times)^\ell,\) and the class
\[\left[-\frac{t}{\Delta}\right]\in K^\times/(K^\times)^d
\]
has order strictly smaller than \(d\), contradicting the maximality of the first arboreal level. Hence \(\operatorname{def}(\Delta,t)=1,\) the \(t\)-adic Galois representation of the associated twisted Carlitz module $\rho_t^c(x)$ is surjective.
\end{proof}

\appendix

\section{An Eventual Primitive Simple Divisor Theorem}

The primitive simple divisor arguments of Sections \ref{Sec.AMf_m} and \ref{Sec.Irreducible Conjugate} are based on Theorem \ref{thm:mason-stothers} applied to a suitable normalized critical orbit sequence. Inspired by \cite{T13}, the purpose of this appendix is to record a more general version of this argument for arbitrary polynomial coefficients. Although the result is not needed for the proofs of the main theorems, it illustrates that the primitive simple divisor phenomenon is not restricted to the special families considered in the paper.

\begin{thm}
\label{thm:eventual_PSD}
Let $q=p^e$ be an odd prime power and $d\geq 3$ with $p\nmid d$. Let \(f_c(x)=cx^d+t\in K[x],\) where $c\in \mathbb F_q[t]\setminus\{0\}$. Define \(a_n=f_c^{ \n}(0),\) and set \(h:=ct^{d-1}.\) Assume that \(h\notin \mathbb F_q[t^p].\) There exists an integer $N_0\ge 4$, depending on $c$, $q$, and $d$, such that for every $n\ge N_0$, the element $a_n$ possesses a primitive simple divisor. More precisely, for every $n\ge N_0$, there exists an irreducible polynomial $P_n\in\mathbb F_q[t]$ that satisfies \(v_{P_n}(a_n)=1\) and \(P_n\nmid cta_1a_2\cdots a_{n-1}.\)
\end{thm}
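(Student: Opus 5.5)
The plan is to follow the proof of Theorem~\ref{thm:PSD}, replacing $t^{M}$ by a general $c$. First we normalize the critical orbit. Since $c\in\mathbb F_q[t]$, every $a_n$ lies in $\mathcal R$. We have $a_1=t$, and if $a_n=tb_n$ then
\[
a_{n+1}=c\,t^d b_n^d+t=t\bigl(h\,b_n^{d}+1\bigr),\qquad h=ct^{d-1}.
\]
So we set $b_1=1$ and $b_{n+1}=h\,b_n^d+1$, with all $b_n\in\mathcal R$. Because $h\mid b_{n+1}-1$, we get $b_n\equiv 1\pmod{h}$ for $n\ge2$. Hence $\gcd(b_n,ct)=1$, and in particular $v_t(a_n)=1$. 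Writing $H=\deg h\ge d-1\ge 2$ and $D_n=\deg b_n$, we get $D_{n+1}=H+dD_n$, so
\[
D_n=H\,\frac{d^{n-1}-1}{d-1},\qquad D_{n+1}-D_n=Hd^{n-1}.
\]

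Next comes the Mason--Stothers step. The triple $b_{n+1},\,-h b_n^d,\,-1$ is pairwise coprime. To rule out the exceptional case we must show $hb_n^d\notin\mathbb F_q[t^p]$. Since $\mathbb F_q$ is perfect, $\mathbb F_q[t^p]=\mathcal R^p$. The hypothesis $h\notin\mathcal R^p$ gives an irreducible $P$ with $p\nmid v_P(h)$. Because $\gcd(b_n,h)=1$, we have $v_P(hb_n^d)=v_P(h)\not\equiv0\pmod p$, so $hb_n^d$ is not a $p$-th power. Theorem~\ref{thm:mason-stothers} then gives $D_{n+1}\le \deg\operatorname{rad}(h)+B_n+B_{n+1}-1$, where $B_n=\deg\operatorname{rad}(b_n)$. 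Using $B_n\le D_n$, this yields
\[
B_{n+1}\ \ge\ Hd^{n-1}-\deg\operatorname{rad}(h).
\]

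Then we bound the primitive part. Let $P\nmid ct$ divide both $b_n$ and some $b_m$ with $m<n$. Then $P\mid a_m$ and $P\mid a_n$. Since $f_c$ has $P$-integral coefficients, the orbit of $0$ modulo $P$ returns to $0$, so the first return time $r$ divides $n$, satisfies $r<n$, and $P\mid b_r$. Hence the degree $N_n$ of the primitive radical of $b_n$ (primes not dividing $cta_1\cdots a_{n-1}$) satisfies
\[
N_n\ \ge\ Hd^{n-2}-\deg\operatorname{rad}(h)-\sum_{r\mid n,\ r<n}D_r
\ \ge\ Hd^{n-2}-\deg\operatorname{rad}(h)-\frac{H\,d^{\lfloor n/2\rfloor}}{(d-1)^2}.
\]
For the last inequality we use $r\le\lfloor n/2\rfloor$ and sum the geometric series, as in Lemma~\ref{lem:primitive-radical-estimate-tM}. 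On the other side, $D_n/2<\tfrac{H d^{n-1}}{2(d-1)}=\tfrac{d}{2(d-1)}Hd^{n-2}\le \tfrac34 Hd^{n-2}$, since $d\ge3$. So it suffices to choose $N_0\ge4$ so that, for all $n\ge N_0$,
\[
\tfrac14 Hd^{n-2}\ >\ \deg\operatorname{rad}(h)+\frac{H d^{\lfloor n/2\rfloor}}{(d-1)^2}.
\]
Such an $N_0$ exists because the left side grows like $d^{n}$ while the right side grows like $d^{n/2}$, plus a constant depending only on $c,q,d$. This yields $N_n>D_n/2$. This step is the main obstacle. The issue is not conceptual: it is making the threshold effective and uniform, since the term $\deg\operatorname{rad}(h)$ can be as large as roughly $H$. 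I would first check the inequality with the crude bound $\deg\operatorname{rad}(h)\le H$ and take $N_0$ minimal for that.

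Finally, the counting argument of the proof of Theorem~\ref{thm:PSD} applies. If every primitive prime of $b_n$ had multiplicity at least $2$, then $\deg b_n\ge 2N_n>D_n$, which is a contradiction. Hence some primitive $P_n$ has $v_{P_n}(b_n)=1$. Since $P_n\nmid t$, we get $v_{P_n}(a_n)=1$, and $P_n\nmid cta_1\cdots a_{n-1}$ by primitivity together with $\gcd(b_n,ct)=1$.
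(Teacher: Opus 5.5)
Your proposal is correct and follows the paper's proof essentially step by step. You normalize $a_n=tb_n$ with $b_{n+1}=hb_n^d+1$, apply Mason--Stothers to get $B_n\ge Hd^{n-2}-\deg\operatorname{rad}(h)$, subtract the proper-divisor contributions via the first-return-time argument, and finish with the multiplicity count once $N_n>D_n/2$. Your version is slightly more careful than the paper's in two places: you justify $hb_n^d\notin\mathbb F_q[t^p]$ using $\gcd(b_n,h)=1$, and you make the final growth comparison explicit through $D_n/2<\tfrac34 Hd^{n-2}$.
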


\begin{proof}
Write \(a_n=tb_n.\) Then $b_1=1$, and for $n\ge2$,
\(b_n=hb_{n-1}^{d}+1,\ h=ct^{d-1}.\) In particular, \(\gcd(b_n,h)=1,\ \gcd(b_n,b_{n-1})=1.\) \\Let
\[
D_n:=\deg(b_n),\ B_n:=\deg\operatorname{rad}(b_n), \
H:=\deg(h),\ S:=\deg\operatorname{rad}(h).
\]
Since \(D_1=0,\ D_n=H+dD_{n-1},\)
we obtain
\[D_n = H(1+d+\cdots+d^{n-2}) = H\frac{d^{n-1}-1}{d-1}.\]
Applying Theorem \ref{thm:mason-stothers} to \(b_n-hb_{n-1}^{d}=1,\) but since \(h\notin\mathbb F_q[t^p],\) we obtain \[D_n \le \deg\operatorname{rad}(hb_{n-1}b_n)-1 \implies D_n \le S+B_{n-1}+B_n-1.\] Since $B_{n-1}\le D_{n-1}$, it follows that \(B_n \ge D_n-D_{n-1}-S+1.\) Using the degree formula, \(D_n-D_{n-1}=Hd^{n-2},\) we obtain \(B_n \ge Hd^{n-2}-S+1.\) Let $N_n$ denote the degree of the primitive radical of $b_n$. As in the proof of Theorem~\ref{thm:PSD}, every nonprimitive irreducible divisor of $b_n$ divides \[\prod_{\substack{r\mid n \\ r<n}} b_r.\]
Therefore \(N_n \ge B_n - \sum_{\substack{r\mid n \\ r<n}} D_r.\) Consequently, \(N_n \ge Hd^{n-2}-S+1 - \sum_{\substack{r\mid n \\ r<n}} D_r.\) Since the sum of the proper-divisor contributions grows like $d^{n/2}$, whereas the leading term grows like $d^{n-2}$, there exists an integer $N_0\ge4$ such that \(N_n>\frac{D_n}{2}\) for all $n\ge N_0$.\\
Fix $n\ge N_0$. If every primitive irreducible divisor of $b_n$ occurred with multiplicity at least $2$, then the primitive part of $b_n$ would have degree at least \(2N_n>D_n,\)
which is impossible. Hence, there exists a primitive irreducible divisor $P_n$ of $b_n$ such that \(v_{P_n}(b_n)=1.\) Since $a_n=tb_n$ and $P_n\neq t$, we obtain \(v_{P_n}(a_n)=1.\) Moreover, \(P_n\nmid cta_1a_2\cdots a_{n-1}.\) Therefore $P_n$ is a primitive simple divisor of $a_n$.
\end{proof}

\begin{cor}
\label{cor:eventual_Kummer}
Keep the notation and hypotheses of Theorem ~\ref{thm:eventual_PSD}. Let $d\mid q-1$. Assume that $n\geq N_0$, that $f_c^{\m}$ is separable over $K$ for $1\leq m\leq n$, and that \(\mathcal G_{n-1}\) acts transitively on $R_{n-1}$. Then
\[
\operatorname{Gal}(K_n/K_{n-1})
\cong
\prod_{\beta\in R_{n-1}} C_d.
\]
\end{cor}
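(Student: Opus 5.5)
The plan is to reduce Corollary~\ref{cor:eventual_Kummer} directly to the Kummer independence criterion, Proposition~\ref{prop:general-kummer-independence}. We apply it with $A=c$, $B=t$ and $a_m=f_c^{\circ m}(0)$. Separability of the iterates up to level $n$ and transitivity of $\mathcal G_{n-1}$ on $R_{n-1}$ are hypotheses of the corollary. The condition $d\mid q-1$ guarantees $\mu_d\subset\mathbb F_q^\times\subset K$ and $p\nmid d$. These are exactly the standing facts used in the proof of Proposition~\ref{prop:general-kummer-independence}: tame Kummer extensions and $d\in\mathbb F_q^\times$ in the chain-rule derivative. That proof never used $d=q-1$ except through these facts, so it applies verbatim for general $d\mid q-1$. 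It therefore remains only to produce a finite place $P$ with
\[
v_P(a_n)=1,\qquad v_P(a_m)=0\ (1\le m<n),\qquad v_P(c)=v_P(t)=0.
\]

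For this we invoke Theorem~\ref{thm:eventual_PSD}. Since $n\ge N_0$, there is an irreducible $P_n\in\mathbb F_q[t]$ with $v_{P_n}(a_n)=1$ and $P_n\nmid c\,t\,a_1\cdots a_{n-1}$. We take $P=P_n$ and check each required condition in turn.

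First, $c$ and $t$ lie in $\mathbb F_q[t]$ and $P\nmid ct$, so $v_P(c)=v_P(t)=0$. Second, each $a_m$ is a polynomial: indeed $a_m=t\,b_m$ with $b_m\in\mathbb F_q[t]$ from the recursion $b_m=hb_{m-1}^d+1$. Hence $v_P(a_m)\ge 0$. Combined with $P\nmid a_m$ for $m<n$, this gives $v_P(a_m)=0$. The one point to be careful about is that ``$P\nmid a_1\cdots a_{n-1}$'' means non-divisibility of polynomials, which is the same as vanishing valuation here precisely because the $a_m$ are integral at every finite place.

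With these hypotheses verified, Proposition~\ref{prop:general-kummer-independence} gives that the classes $[(\beta-t)/c]$, for $\beta\in R_{n-1}$, are independent in $K_{n-1}^\times/(K_{n-1}^\times)^d$. Kummer theory (Theorem~\ref{Kummer-Theory}) then yields $\operatorname{Gal}(K_n/K_{n-1})\cong\prod_{\beta\in R_{n-1}}C_d$.

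The main obstacle is confirming that Proposition~\ref{prop:general-kummer-independence} really transfers to a general $d\mid q-1$. I would reread its proof and check two points. The chain-rule factor $(Ad)^{n-1}$ must be a $P$-unit, which holds since $d\in\mathbb F_q^\times$ and $v_P(c)=0$. The leading coefficient of $f_c^{\circ(n-1)}$, a power of $c$, must also be a $P$-unit, which holds for the same reason. Everything else in that proof is independent of $d$.
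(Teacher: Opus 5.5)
Your proposal is correct and follows the paper's proof. Both take the primitive simple divisor $P_n$ from Theorem~\ref{thm:eventual_PSD}, run the valuation-separation argument of Proposition~\ref{prop:general-kummer-independence} with $A=c$ and $B=t$, and conclude by Kummer theory; your explicit checks (that the $a_m$ are polynomials, so $P_n\nmid a_m$ gives $v_{P_n}(a_m)=0$, and that the proposition's proof uses only $\mu_d\subset K$ and $p\nmid d$) spell out what the paper leaves implicit in the phrase ``the same valuation-separation argument.''
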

\begin{proof}
For each $\beta\in R_{n-1}$, the equation \(f_c(x)=\beta\) is equivalent to \(x^d=\frac{\beta-t}{c}.\) Since \(\mu_d\subset K\subset K_{n-1}\), the extension \( K_n/K_{n-1} \) is a multiradical Kummer extension obtained by adjoining the \(d^{th}\) roots of the elements 
\[ \frac{\beta-t}{c},\quad \textrm{where}\quad \beta\in R_{n-1}. \]
By Theorem ~\ref{thm:eventual_PSD}, $a_n$ has a primitive simple divisor $P_n$ satisfying
\(P_n\nmid cta_1a_2\cdots a_{n-1}.\) The same valuation-separation argument as in Proposition \ref{prop:general-kummer-independence} shows that the classes \(\left[\frac{\beta-t}{c}\right],\textrm{ with }\ \beta\in R_{n-1},\) are linearly independent in \(K_{n-1}^{\times}/(K_{n-1}^{\times})^d.\) Therefore Theorem \ref{Kummer-Theory} gives
\[\operatorname{Gal}(K_n/K_{n-1})\cong\prod_{\beta\in R_{n-1}} C_d.\]
\end{proof}

% ------------------------------------------------------------
% Bibliography
% ------------------------------------------------------------
% Option 1: use a separate .bib file. Rename the bibliography file as needed.
\bibliographystyle{alpha}
\bibliography{MAGIPTCT}

% Option 2: use an inline bibliography instead of a .bib file.
% Uncomment the following block if you do not want to use BibTeX.
%
% \begin{thebibliography}{99}
%
% \bibitem[Lan84]{Lan84}
% S. Lang, \emph{Algebra}, Addison-Wesley, 1984.
%
% \end{thebibliography}

\end{document}